\documentclass[12pt]{amsart}
\usepackage[inner=1in, outer=1in, top=1in, bottom=1in]{geometry}

\usepackage[utf8]{inputenc}
\usepackage{amsmath}
\usepackage{amsfonts}
\usepackage{algpseudocode}
\usepackage{algorithm}

\usepackage{url, mathrsfs}

\usepackage[colorlinks]{hyperref}
\usepackage[capitalize, nameinlink]{cleveref}
\usepackage{amsthm}
\author{Jonathan Ni\~no-Cortes}
\author{Cynthia Vinzant}
\title{The convex algebraic geometry of higher-rank numerical ranges}
\newtheorem{theorem}{Theorem}[section]
\newtheorem{proposition}[theorem]{Proposition}
\newtheorem{lemma}[theorem]{Lemma}
\newtheorem{corollary}[theorem]{Corollary}

\theoremstyle{definition}
\newtheorem{example}[theorem]{Example}
\newtheorem{definition}[theorem]{Definition}
\newtheorem{remark}[theorem]{Remark}

\usepackage{graphicx}
\usepackage{caption}
\usepackage{subcaption}
\usepackage{bbm}

\newcommand{\R}{\mathbb{R}}
\newcommand{\Pj}{\mathbb{P}}
\newcommand{\C}{\mathbb{C}}
\newcommand{\PP}{\mathbb{P}}
\newcommand{\W}{\mathcal{W}}
\newcommand{\ii}{\mathbbm{i}}
\newcommand{\V}{\mathcal{V}}

\DeclareMathOperator{\conv}{conv}
\DeclareMathOperator{\cone}{conicalHull}

\DeclareMathOperator{\Sing}{Sing}

\DeclareMathOperator{\Span}{span}

\usepackage[draft]{fixme}
\fxsetup{theme=color, mode=multiuser, noinline}
\FXRegisterAuthor{j}{J}{\color{red}Jonathan}
\FXRegisterAuthor{c}{C}{\color{blue}Cynthia}

\begin{document}
	\algnewcommand\algorithmicswitch{\textbf{switch}} \algnewcommand\algorithmiccase{\textbf{case}}
	\algdef{SE}[SWITCH]{Switch}{EndSwitch}[1]{\algorithmicswitch\ #1\ \algorithmicdo}{\algorithmicend\ \algorithmicswitch}
	\algdef{SE}[CASE]{Case}{EndCase}[1]{\algorithmiccase\ #1}{\algorithmicend\ \algorithmiccase}
	\algtext*{EndCase} \algtext*{EndSwitch} \algtext*{EndIf} \algtext*{EndFor}

	\begin{abstract}
		The higher-rank numerical range is a convex compact set generalizing the classical
		numerical range of a square complex matrix, first appearing in the study of
		quantum error correction. We will discuss some of the real algebraic and convex
		geometry of these sets, including a generalization of Kippenhahn’s theorem, and
		describe an algorithm to explicitly calculate the higher-rank numerical range
		of a given matrix.
	\end{abstract}

	\maketitle

	\section{Introduction}

	The \emph{numerical range} of a complex matrix $A \in \mathbb{C}^{n\times n}$
	is given by
	\[
		\W(A) = \{x^{*}Ax : x \in \mathbb{C}^{n} \text{ and }x^{*}x = 1 \}.
	\]
	It is a classical theorem by Toeplitz and Haussdorff
	\cite{Hausdorff1919, Toeplitz1918} that $\W(A)$ is a convex compact subset of
	the complex plane. See \cite[Section II.14]{BarvinokAlexander2015CiC} for a modern
	proof. In 1951, Kippenhahn showed that $\W(A)$ is the convex hull of a real algebraic
	curve. This curve is dual to the algebraic curve defined by the vanishing of
	the \emph{Kippenhahn polynomial} of $A$, namely
	\[
		f_{A} = \det\left(tI_{n} + x \Re(A) + y\Im(A)\right)
	\]
	where $I_{n}$ denote the $n\times n$ identity matrix, $\Re(A) = \frac{1}{2}(A+A
	^{*})$ and $\Im(A) = \frac{1}{2\ii}(A-A^{*})$.

	Motivated by problems in compression and quantum error correction, in 2006, Choi,
	Kribs, \.Zyczkowski introduced a generalization called the higher-rank
	numerical range of a matrix \cite{CHOI2006828}, \cite{CHOI200677}. Given a
	matrix $A\in \C^{n\times n}$ and a positive integer $1\leq k \leq n$, the \emph{rank-$k$}
	numerical range of $A$ is defined to be
	\begin{equation}
		\label{eq:projections}\Lambda_{k}(A) = \left\{ \mu \in \mathbb{C}: \exists \text{
		a rank-}k \text{ orthogonal projection $P$ s.t.~}PAP = \mu P \right\}.
	\end{equation}
	In the context of quantum error correcting, elements $\mu\in \Lambda_{k}(A)$ are
	known as compression-values \cite{CHOI200677}. An equivalent description of
	this set is
	\begin{equation}
		\Lambda_{k}(A) = \left\{ \mu \in \mathbb{C}: \exists X \in \mathbb{C}^{n
		\times k}\text{ such that }X^{*}X = I_{k} \text{ and }X^{*}AX = \mu I_{k} \label{eq:higher_rank}
		\right\}.
	\end{equation}
	From this description, one can see that the higher-rank numerical ranges are nested
	and that the rank-one numerical range coincides with $\W(A)$. That is, $\W(A) =
	\Lambda_{1}(A)$ and $\Lambda_{k}(A) \supseteq \Lambda_{k+1}(A)$ for all $k$.
	They are also guaranteed to be non-empty for $k < n/3 + 1$
	\cite{ChiKwongNonEmptyness}.

	Choi et.~al.~conjectured that higher-rank numerical ranges are always convex, which
	was proved by Woerdeman in 2008. \cite{Woerdeman2008}. Independently, Li and Sze
	also proved that $\Lambda_{k}(A)$ is convex \cite{Li2008} by proving a description
	as an intersection of halfplanes, namely
	\begin{equation}
		\label{eq:half_plane}\Lambda_{k}(A) = \bigcap_{\theta \in [0,2\pi)}\left\{ \mu
		\in \mathbb{C}:{\rm Re}(e^{-i\theta}\mu) \leq \lambda_{k}(\Re(e^{-i\theta}A))
		\right\}
	\end{equation}
	where $\lambda_{1}(M)\geq \hdots \geq \lambda_{n}(M)$ denote the real
	eigenvalues of a Hermitian matrix $M$.

	\begin{example}
		\label{ex:quartic1} The $4\times 4$ weighted cyclic shift matrix
		\begin{align*}
			A = \begin{pmatrix}0&2 & 0 & 0 \\ 0 & 0 & 4 & 0 \\ 0 & 0 & 0 & 6 \\ 8 & 0 & 0 & 0\
\end{pmatrix} \ \text{ gives }\  f_{A} & =\det\begin{pmatrix}t & x-\ii y & 0 & 4 x+4 \ii y \\ \! x+\ii y \!& t &\! 2 x-2 \ii y \!& 0 \\ 0 &\! 2 x+2 \ii y \!& t &\! 3 x-3 \ii y \!\\ \!4 x-4 \ii y \!& 0 &\!3 x+3 \ii y \!& t\end{pmatrix} \\
			                                                                                                                          & = 25 (x^{4}+y^{4})+434 x^{2} y^{2}-30 (x^{2}+y^{2})^{2}+t^{4}.
		\end{align*}
		Figure~\ref{fig:quartic1} shows the zero set of $f_{A}(1,x,y)$ together with
		rank-one and rank-two numerical ranges of $A$. The dual curve of $f_{A}$ is
		defined by the polynomial
		\begin{align*}
			 & g_{A} = 15625a^{12}+ 273750a^{10}b^{2} + 90375a^{8}b^{4} + 549236a^{6}b^{6} + 90375a^{4}b^{8} + 273750a^{2}b^{10}+ 15625b^{12} \\
			 & - 1368750a^{10}- 17139750a^{8}b^{2} + 44934900a^{6}b^{4}+ 44934900a^{4}b^{6} - 17139750a^{2}b^{8} - 1368750b^{10}              \\
			 & + 47610625a^{8} + 429249700a^{6}b^{2} - 1058169786a^{4}b^{4} + 429249700a^{2}b^{6} + 47610625b^{8}                             \\
			 & - 838188000a^{6} - 5975989920a^{4}b^{2} - 5975989920a^{2}b^{4} - 838188000b^{6} + 7621461600a^{4}                              \\
			 & + 39076977600a^{2}b^{2} + 7621461600b^{4}- 30526848000a^{2} - 30526848000b^{2} + 21083040000.
		\end{align*}
		By Kippenhahn's theorem, $\W(A)$ is the convex hull of $\{a+\ii b\in \C : g(a
		,b)=0\}$.
	\end{example}
	\begin{figure}
		\includegraphics[height=2in]{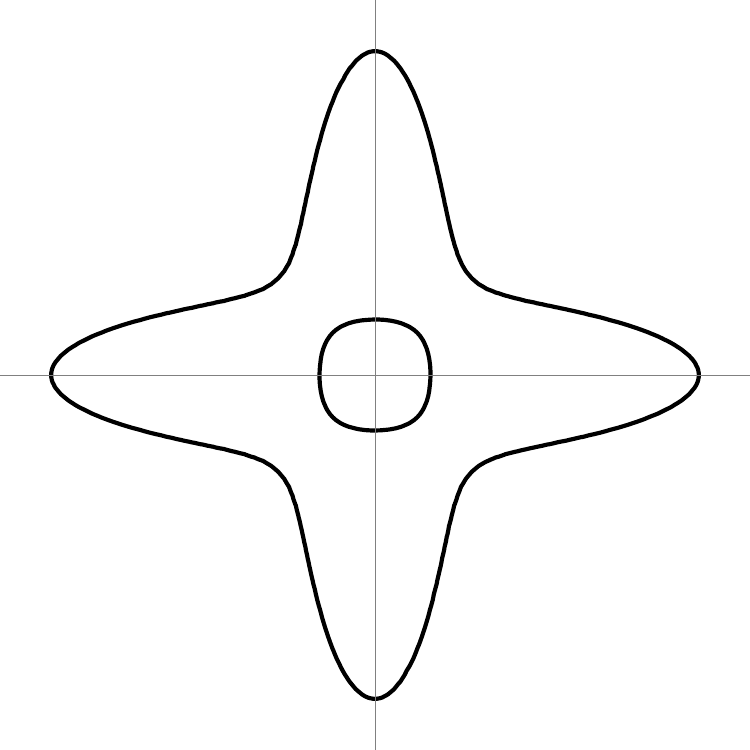}
		\includegraphics[height=2in]{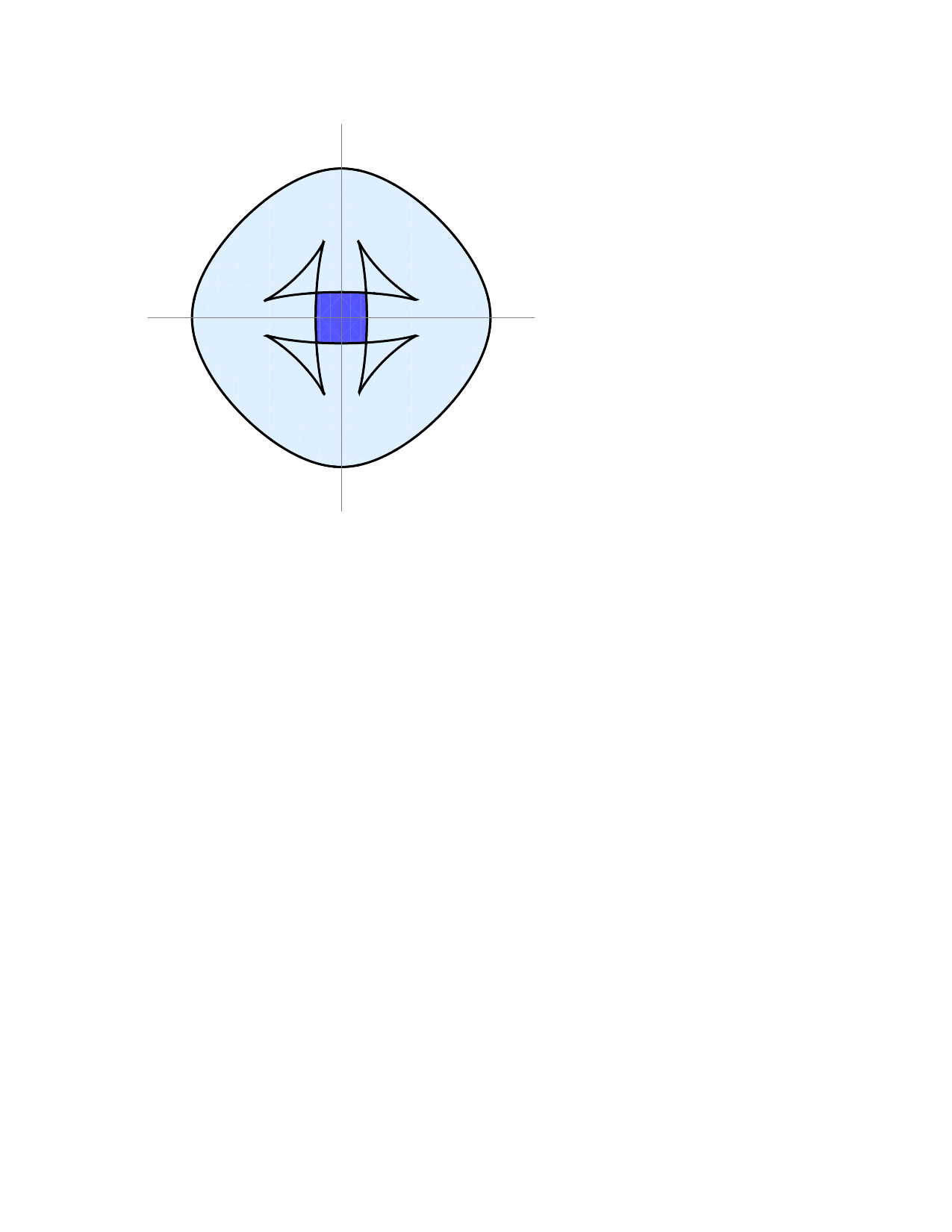}
		\caption{The curve $f_{A}=0$ and numerical ranges $\Lambda_{1}(A)$ and
		$\Lambda_{2}(A)$ of the $4\times 4$ matrix $A$ from Example~\ref{ex:quartic1}.
		The zero set of the polynomial $g_{A}$ vanishes on the boundaries of $\Lambda
		_{1}(A)$ and $\Lambda_{2}(A)$.}
		\label{fig:quartic1}
	\end{figure}

	Higher-rank numerical ranges arose in the study of quantum error-correcting codes.
	In this context, errors are represented as sets of operators that are applied to
	vectors in the Hilbert space of possible quantum states. The goal is to find a
	projection that makes the action of these operators on the data negligible, usually
	at the expense of restricting to a subspace, thereby lowering the capacity of
	the channel. Each projection is associated with a compression value, and the
	set of all possible compression values for rank-$k$ projections corresponds to
	the rank-$k$ numerical range. See \cite{CHOI2006828}, \cite{CHOI200677} for
	more details. Gau and Wu \cite{gau_higher_rank_2013} showed that that the Kippenhahn
	polynomial $f_{A}$ of a matrix $A$ completely determines all of its higher-rank
	numerical ranges and, conversely, any two matrices all of whose higher-rank numerical
	ranges coincide have the same Kippenhahn polynomial. Bebiano, Provid\'encia,
	and Spitkovsky \cite{BEBIANO2021246} give a description of the rank-$k$
	numerical range for certain families of matrices.

	Higher-rank numerical ranges have special behavior when $f_{A}$ is a power of
	a single linear form, i.e.~$f_{A} = (t+ax+by)^{n}$. In this case $\Lambda_{k}(A
	) = \{a+\ii b\}$ for all $k=1, \hdots, n$ \cite[Prop.~2.2]{CHOI2006828}.
	Throughout the paper we restrict ourselves to matrices $A$ for which $f_{A}$
	is not a power of a linear form.

	In this paper we take an algebraic approach to the problem of computing higher-rank
	numerical ranges. For $k=1$, Henrion~\cite{henrion2006} gives both an
	algebraic description of $\W(A) = \Lambda_{1}(A)$ and a description as the
	projection of a spectrahedron. Here we aim to generalize the algebraic part of
	this to $k>1$. Using the description \eqref{eq:half_plane}, we describe the
	convex dual of $\Lambda_{k}(A)$ in Section~\ref{sec:DualCone}. In Section~\ref{sec:MembershipTest},
	we provide an algorithm for testing membership of a fixed point in the rank-$k$
	numerical range of a matrix. In Section~\ref{sec:Dim01}, we explore the
	geometry of the curve $f_{A}=0$ when the rank-$k$ numerical range is nonempty
	but not full-dimensional. Building off of this characterization, we give an
	algorithm to compute $\Lambda_{k}(A)$ in Section~\ref{sec:algorithm}. We conclude
	by giving a gallery of interesting examples in Section~\ref{sec:gallery}.

	There are many other interesting generalizations of the numerical range. For
	example, Afshin et al. studied joint higher-rank numerical ranges
	\cite{Afshin2018}. Given an $m$-tuple $(A_{1}, \ldots A_{m})$ of Hermitian matrices
	the joint higher-rank numerical range is defined as
	\begin{equation}
		\begin{split}
			\Lambda_{k}(A_{1},\ldots A_{m}) := \{ \mu \in \mathbb{C}:&\: \exists X \in
			\mathbb{C}^{n \times k}\text{ s.t. }X^{*}X = I_{k} \\
			&\text{ and }X^{*}A_{j}X = \mu I_{k} \text{ for all }1 \leq j \leq m \label{eq:joint_higher_rank}
			\}.
		\end{split}
	\end{equation}
	Observe that when $m =2$, the previous set is the usual higher-rank numerical
	range. Specifically, $\Lambda_{k}(A_{1},A_{2}) = \Lambda_{k}(A)$, where $A = A_{1}
	+ \ii A_{2}$. This construction addresses the problem of having more than one
	error operator in the quantum channel. However, these sets are not convex in
	general, even for $k=1$ \cite{GUTKIN2004143}.

	{\bf Acknowledgements.} We would like to thank Pamela Gorkin, Didier Henrion, Ion
	Nechita, Mohab Safey El Din, and Rekha Thomas for helpful discussions and comments.
	Both authors received support from NSF grant DMS-2153746 and a Sloan Research
	Fellowship.

	\section{Preliminaries from convex algebraic geometry}

	\subsection{Notation}
	For a Hermitian matrix $M$, we use
	$\lambda_{1}(M) \geq \hdots \geq \lambda_{n}(M)$ to denote the eigenvalues of
	$M$. We often fix a matrix $A$ and use the notation
	\[
		\lambda_{k}(\theta) =\lambda_{k}(\Re(e^{-i\theta}A)) = \lambda_{k}(\cos(\theta
		) \Re A + \sin(\theta) \Im A) \ \ \text{ and }\ \
\lambda_{k}(x,y) = \lambda_{k}
		(x \Re A + y \Im A)
	\]
	where $\theta\in [0,2\pi)$ and $(x,y)\in \R$. Any $(x,y)\in \R^{2}$ can be
	written as $(x,y) = r(\cos(\theta),\sin(\theta))$ for some $r\geq 0$ and
	$\theta\in [0,2\pi)$, in which case $\lambda_{k}(x,y) = r\lambda_{k}(\theta).$

	\subsection{Convex geometry}
	A set $S\subset \mathbb{R}^{n}$ is \emph{convex} if for any points ${\bf x},{\bf y}
	\in S$, the line segment joining them, $\{\lambda{\bf x}+ (1-\lambda){\bf y}: 0
	\leq \lambda \leq 1\}$, is contained in $S$. We call $S$ a \emph{convex cone}
	if it is also closed under nonnegative scaling, or equivalently if for any two
	points ${\bf x},{\bf y}\in S$, the set of conic combinations
	$\{\lambda{\bf x}+ \mu{\bf y}: \lambda, \mu\in \mathbb{R}_{\geq 0}\}$ is
	contained in $S$. We call $F\subseteq S$ a \emph{face} of a convex set if for
	every ${\bf p}\in F$, if ${\bf p}= \lambda{\bf x}+ (1-\lambda){\bf y}$ for some
	$\lambda \in (0,1)$ where ${\bf x},{\bf y}\in S$, then ${\bf x},{\bf y}\in F$.
	That is, any way to express a point from $F$ as a convex combination of points
	from $S$ involve only points from $F$.

	The \emph{convex hull} of a set $S\subset \mathbb{R}^{n}$ is the smallest convex
	set containing $S$ and its \emph{conic hull} is the smallest convex cone
	containing $S$. We can write these as
	\begin{align*}
		{\rm conv}(S) & = \left\{\sum_{i=1}^{k} \lambda_{i} p_{i} : \ k\in \mathbb{N}, \ p_{i}\in S, \ \lambda_{i}\geq 0, \ \sum_{i=1}^{k}\lambda_{i} = 1 \right\}, \text{ and } \\
		\cone(S)      & = \left\{\sum_{i=1}^{k} \lambda_{i} p_{i} : \ k\in \mathbb{N}, \ p_{i}\in S, \ \lambda_{i}\geq 0\right\}.
	\end{align*}
	It is often more convenient to work with convex cones than convex sets.
	Following the notation of \cite{Sinn2014Algeb-28161}, for any set $S\subseteq \R
	^{n}$, we define $\widehat{S}$ to be the conical hull of $S$ embedded into
	$\R^{n+1}$ at height one:
	\[
		\widehat{S}= \cone(\{ 1 \} \times S ) = \left\{\sum_{i=1}^{k} \lambda_{i} (1,
		p_{i}) : \ k\in \mathbb{N}, \ p_{i}\in S, \ \lambda_{i}\geq 0\right\} \subset
		\R^{n+1}.
	\]

	For any convex cone $C$, the set of linear functions that take only nonnegative
	values on $C$ also forms a convex cone, known as the \emph{dual cone} of $C$,
	denoted $C^{*}$. For $C\subseteq \R^{n}$ this is
	\[
		C^{*} = \{w \in \R^{n} : \langle w,v\rangle \geq 0 \text{ for all }v\in C\}.
	\]
	This is a closed convex cone in $\R^{n}$. It is a classical theorem in convex
	geometry that the dual of $C^{*}$ coincides with the closure of $C$ in the
	Euclidean topology. See Theorem 1.2, the subsequent Problem 2, and Lemma~1.4 of
	\cite[Section IV.1]{BarvinokAlexander2015CiC}. This is known as the biduality
	theorem:

	\begin{theorem}[Biduality]
		For any nonempty convex cone $C$, $(C^{*})^{*} = \overline{C}$.
	\end{theorem}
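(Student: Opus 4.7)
The plan is to prove the two inclusions $\overline{C} \subseteq (C^{*})^{*}$ and $(C^{*})^{*} \subseteq \overline{C}$ separately, with the first being immediate from the definitions and the second requiring a separation argument.

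For the easy inclusion, I would start by fixing $v \in C$ and observing that for every $w \in C^{*}$, the definition of the dual cone gives $\langle w, v \rangle \geq 0$, so $v \in (C^{*})^{*}$. Hence $C \subseteq (C^{*})^{*}$. Since $(C^{*})^{*}$ is an intersection of closed half-spaces (one for each $w \in C^{*}$), it is closed, and therefore $\overline{C} \subseteq (C^{*})^{*}$.

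The main work is the reverse inclusion, which I would prove by contrapositive: assume $v \notin \overline{C}$ and produce some $w \in C^{*}$ with $\langle w, v \rangle < 0$, thereby certifying $v \notin (C^{*})^{*}$. The tool is the separating hyperplane theorem applied to the singleton $\{v\}$ and the closed convex set $\overline{C}$: there exist $w \in \R^{n}$ and $c \in \R$ with $\langle w, v\rangle < c \leq \langle w, u\rangle$ for all $u \in \overline{C}$. Taking $u = 0 \in \overline{C}$ forces $c \leq 0$. The key step is then to upgrade the bound $\langle w, u \rangle \geq c$ to $\langle w, u \rangle \geq 0$ for every $u \in \overline{C}$: since $\overline{C}$ is a cone, for any fixed $u \in \overline{C}$ and every $\lambda > 0$ we have $\lambda u \in \overline{C}$, whence $\langle w, u\rangle \geq c/\lambda$, and letting $\lambda \to \infty$ gives $\langle w, u \rangle \geq 0$. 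Thus $w \in C^{*}$, while $\langle w, v \rangle < c \leq 0$, so $v \notin (C^{*})^{*}$.

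The main obstacle is really just invoking the separating hyperplane theorem in the correct form, namely strict separation of a point from a closed convex set, which does require $\overline{C}$ to be closed (hence the need to take the closure in the statement). Everything else is bookkeeping exploiting that $\overline{C}$ is a cone containing the origin. Since the paper cites \cite[Section IV.1]{BarvinokAlexander2015CiC} for this classical fact, I would simply reference the separating hyperplane theorem from there rather than reprove it.
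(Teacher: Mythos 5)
Your argument is correct and is the standard separation proof of biduality; all the steps (using that $0\in\overline{C}$ to get $c\leq 0$, then scaling by $\lambda\to\infty$ to upgrade the bound to nonnegativity) are sound and rely only on $\overline{C}$ being a nonempty closed convex cone. The paper does not give its own proof of this theorem; it simply cites \cite[Section IV.1]{BarvinokAlexander2015CiC}, which contains exactly this separating-hyperplane argument, so there is nothing to compare beyond noting that you have reproduced the classical proof rather than a genuinely different one.
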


	Note that when $C = \emptyset$, $C^{*} = \R^{n}$ and
	$(C^{*})^{*} = \{(0,\hdots, 0)\}$. From an inequality description of a cone $C$,
	we can therefore understand its dual.

	\begin{corollary}
		\label{cor:DualConicalHull} Let $K = \{{\bf a}\in \R^{n} : \langle{\bf x}(\theta
		),{\bf a}\rangle \geq 0 \text{ for all }\theta \in \Theta\}$ where ${\bf x}(\theta
		)\in \R^{n}$. The dual cone of $K$ is
		\[
			K^{*} = \overline{\cone\{ {\bf x}(\theta) : \theta\in \Theta\}}.
		\]
	\end{corollary}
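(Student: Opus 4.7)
The plan is to recognize $K$ itself as a dual cone, and then invoke the biduality theorem. Specifically, let $C = \cone\{{\bf x}(\theta) : \theta\in \Theta\} \subseteq \R^{n}$. My claim is that $K = C^{*}$; once this is established, the biduality theorem gives $K^{*} = (C^{*})^{*} = \overline{C}$, which is the desired conclusion.

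To verify $K = C^{*}$, I would argue both inclusions directly. For $K \subseteq C^{*}$, take ${\bf a}\in K$ and any $v \in C$. By definition of the conic hull, $v = \sum_{i=1}^{m} \lambda_{i}{\bf x}(\theta_{i})$ for some $\theta_{i}\in \Theta$ and $\lambda_{i}\geq 0$. Then
\[
\langle v, {\bf a}\rangle \;=\; \sum_{i=1}^{m}\lambda_{i}\langle {\bf x}(\theta_{i}),{\bf a}\rangle \;\geq\; 0,
\]
since each summand is nonnegative. The reverse inclusion $C^{*}\subseteq K$ is immediate because each generator ${\bf x}(\theta)$ already lies in $C$, so any ${\bf a}\in C^{*}$ automatically satisfies $\langle {\bf x}(\theta),{\bf a}\rangle \geq 0$ for all $\theta\in\Theta$.

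Having shown $K = C^{*}$, I apply biduality to the (nonempty) cone $C$ to conclude $K^{*} = (C^{*})^{*} = \overline{C}$, which is exactly the stated formula. I would also handle the degenerate case $\Theta = \emptyset$ separately, since the biduality theorem as stated assumes nonemptiness: if $\Theta=\emptyset$ then $K = \R^{n}$, so $K^{*}=\{0\}$, and $\overline{\cone(\emptyset)} = \{0\}$ by the standard convention, so the identity still holds.

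There is no genuine obstacle here; the proof is essentially bookkeeping on definitions. The only minor subtlety is making sure the closure on the right-hand side is necessary—i.e., that $C$ need not itself be closed even when $\Theta$ is compact and ${\bf x}$ is continuous, which is why the conclusion must be phrased with the closure $\overline{C}$ rather than $C$ itself. This justifies the closure in the biduality statement and in the corollary.
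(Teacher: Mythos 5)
Your proposal is correct and follows essentially the same route as the paper: both recognize $K$ as the dual cone of $C = \cone\{{\bf x}(\theta):\theta\in\Theta\}$ and then invoke the biduality theorem. Your added verification of the two inclusions and the $\Theta=\emptyset$ edge case is just a more explicit rendering of the same argument.
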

	\begin{proof}
		The condition that $\langle{\bf x}(\theta),{\bf a}\rangle \geq 0$ for all $\theta
		\in \Theta$ is equivalent to the condition that $\langle{\bf x},{\bf a}\rangle
		\geq 0$ for all ${\bf x}$ in the conical hull of $\{{\bf x}(\theta) : \theta\in
		\Theta\}$. This shows that $K$ is the dual cone of
		$C = \cone\{{\bf x}(\theta) : \theta\in \Theta\}$. The result then follows from
		the biduality theorem.
	\end{proof}

	\subsection{Duality of plane curves}
	Let $f\in \R[t,x,y]$ be a homogeneous polynomial of degree $n$. That is, $f=\sum
	_{i+j\leq n}c_{ij}t^{i}x^{j}y^{n-i-j}$ for some constants $c_{ij}\in \R$. We
	use $\V(f)$ and $\V_{\R}(f)$ to denote the variety of $f$ in $\PP^{2}(\C)$ and
	$\PP^{2}(\R)$, respectively. Up to scaling, the polynomial $f$ has a unique factorization
	into irreducible polynomials $f = \prod_{i=1}^{s}f_{i}^{m_i}$ where ${\rm gcd}(
	f_{i}, f_{j}) = 1$ for $i\neq j$. We use $f^{\rm red}$ to denote the square-free
	product $\prod_{i=1}^{s}f_{i}$. Then $\V(f) = \V(f^{\rm red})$.

	A point $p\in \V(f)$ is called a nonsingular point of $\V(f)$ if the gradient $\nabla
	(f^{\rm red})$ is nonzero at $p$, in which case
	$\frac{\partial f^{\rm red}}{\partial t}(p)t+\frac{\partial f^{\rm red}}{\partial
	x}(p)x+\frac{\partial f^{\rm red}}{\partial y}(p)y = 0$
	defines the tangent line of $\V(f)$ at $p$. Otherwise we call $p$ a singular point
	of $\V(f)$. We use $\Sing(f)$ and $\Sing_{\R}(f)$ to denote the set of
	singular points of $\V(f)$ and $\V_{\R}(f)$, respectively.

	The \emph{dual variety} of $\V(f)$, denoted $\V(f)^{*}$, is defined as the
	image of $\V(f)\backslash \Sing(f)$ under the map
	$p\mapsto [\nabla f^{\rm red}(p)]$. That is,
	\[
		\V(f)^{*} = \left\{[c:a:b]\in \PP^{2}(\C) : ct+ax+by=0 \text{ is tangent to $\V
		(f)$ at some point $p$}\right\}.
	\]
	When $f$ is irreducible and has degree $\geq 2$, $\V(f)^{*}$ is an irreducible
	plane curve. When $f= ct+ax+by$, the dual variety $\V(f)^{*}$ is a single point
	$[c:a:b]$. In general, if $f = \prod_{i=1}^{s}f_{i}^{m_i}$ is an irreducible factorization
	of $f$, the dual variety of $f$ is the union of the dual varieties of its
	irreducible factors, $\V(f)^{*} = \cup_{i=1}^{s}\V(f_{i})^{*}$.

	\subsection{Coordinate changes}
	Numerical ranges behave nicely under affine-linear maps on $\C\cong \R^{2}$.
	Consider an affine-linear map $L:\R^{2}\to \R^{2}$ given by
	\begin{equation}
		\label{eq:coordChange}L(a,b)= (u_{01}+u_{11}a+u_{21}b, u_{02}+u_{12}a+u_{22}b
		),
	\end{equation}
	where $u_{ij}\in \R$. We can extend this to an action of the affine-linear group
	on $n\times n$ matrices as follows. Given a matrix $A\in \C^{n\times n}$, define
	\[
		L\cdot A = (u_{01}+\ii u_{02})I + (u_{11}+\ii u_{12})\Re(A) + (u_{21}+\ii u_{22}
		)\Im(A).
	\]
	One can check that $L(\Lambda_{k}(A)) = \Lambda_{k}(L\cdot A)$. Note that this
	also corresponds to a linear-change of coordinates on the Kippenhahnn
	polynomial:
	\[
		f_{L\cdot A}(t,x,y) = f_{A}\left(t+u_{01}x + u_{02}y, u_{11}x + u_{12}y, u_{21}
		x+u_{22}y\right).
	\]

	\begin{lemma}
		\label{lm:singDegree} For $A\in \C^{n\times n}$ and $p = (p_{0},p_{1},p_{2})\in
		\R^{3}$ with $(p_{1}, p_{2})\neq (0,0)$, the following quantities coincide:
		\begin{itemize}
			\item[(i)] the corank of the matrix $p_{0}I + p_{1}\Re(A) + p_{2}\Im(A)$,

			\item[(ii)] the maximum $d$ such that $r^{d}$ divides $f_{A}(rq+sp) \in \R[
				r,s]$ for every $q \in \R^{3}$ , and

			\item[(iii)] the minimum degree of a monomial of $f_{L\cdot A}(t,x,1)$ where
				$L:\R^{2}\to \R^{2}$ is any invertible affine linear transformation as in
				\eqref{eq:coordChange} with $(u_{02},u_{12},u_{22})=p$.
		\end{itemize}
	\end{lemma}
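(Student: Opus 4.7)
The plan is to show all three quantities equal $d := \text{corank}(M(p))$, where $M(v) = v_{0} I + v_{1}\Re(A) + v_{2}\Im(A)$, which is (i) by definition. The common technical ingredient for (ii) and (iii) is a column-factoring observation: choosing an invertible $P$ so that $PM(p)P^{-1}$ has its last $d$ columns zero (using a basis extending one of $\ker M(p)$), the last $d$ columns of $P(rX + sM(p))P^{-1}$ are $r$ times the corresponding columns of $PXP^{-1}$, so multilinearity of the determinant in its columns gives $r^{d} \mid \det(rX + sM(p))$ for any matrix $X$. This step is elementary linear algebra and should present no serious obstacle.

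For (ii), I apply this with $X = M(q)$, using the identity $f_{A}(rq + sp) = \det(rM(q) + sM(p))$: the lower bound $r^{d} \mid f_{A}(rq + sp)$ then holds for every $q \in \R^{3}$, so (ii) $\geq d$. For the matching bound, specialize $q = (1, 0, 0)$: the determinant becomes $\det(rI + sM(p)) = \prod_{i=1}^{n}(r + s\mu_{i})$ with $\mu_{1}, \ldots, \mu_{n}$ the eigenvalues of $M(p)$. Exactly $d$ of these vanish, so this factors as $r^{d}\prod_{\mu_{i}\neq 0}(r + s\mu_{i})$; the second factor at $r = 0$ equals the nonzero polynomial $s^{n-d}\prod_{\mu_{i}\neq 0}\mu_{i}$, so $r^{d+1}$ does not divide, forcing (ii) $\leq d$.

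For (iii), I first unpack the polynomial. Since $\Im(L\cdot A) = u_{02}I + u_{12}\Re(A) + u_{22}\Im(A) = M(p)$, a direct substitution gives
\[
	f_{L\cdot A}(t, x, 1) = \det\!\left(tI + x\bigl(u_{01}I + u_{11}\Re(A) + u_{21}\Im(A)\bigr) + M(p)\right).
\]
The minimum monomial degree of a polynomial in $(t, x)$ equals its $r$-adic order after the substitution $(t, x) = (rt', rx')$. Applying the column-factoring observation with $X = Y(t', x') := t'I + x'(u_{01}I + u_{11}\Re(A) + u_{21}\Im(A))$ gives $r^{d} \mid \det(rY(t', x') + M(p))$, so the $r$-order is at least $d$. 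Setting $x' = 0$ reduces the determinant to $\det(rt'I + M(p)) = \prod_{i}(rt' + \mu_{i})$, whose coefficient of $r^{d}$ is the nonzero polynomial $(t')^{d}\prod_{\mu_{i}\neq 0}\mu_{i}$. Hence the minimum monomial degree is exactly $d$, and this value does not depend on the particular invertible $L$ chosen.
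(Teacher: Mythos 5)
Your proof is correct and takes essentially the same route as the paper's: bound the relevant order/degree by the corank of $M(p)=p_0 I + p_1\Re A + p_2\Im A$, then specialize to $q=(1,0,0)$ (equivalently $x=0$) to get the matching bound. The only cosmetic difference is that the paper bounds $\deg_s(f_A(rq+sp))$ and $\deg_y(f_{L\cdot A})$ by $\mathrm{rank}(M(p))$ via a Laplace-expansion argument and then uses homogeneity of $f_A$ to convert into a divisibility statement, whereas you factor $r^d$ out of the determinant directly via column multilinearity after conjugating $M(p)$ to have $d$ zero columns; these are dual phrasings of the same linear-algebra observation.
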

	This number is called the \emph{multiplicity} of $f_{A}$ at $p$. The
	multiplicity of $f_{A}$ is $\geq 1$ if and only if $f_{A}(p)=0$ and $\geq 2$
	if and only if both $f_{A}(p)=0$ and $\nabla f_{A}(p)=0$.

	\begin{proof}
		For $(t,x,y)\in \R^{3}$, let $M_{A}(t,x,y)$ denote the linear matrix pencil
		\[
			M_{A}(t,x,y) = tI + x\Re(A) + y\Im(A).
		\]

		(i)$=$(iii) For an invertible affine linear transformation $L:\R^{2}\to \R^{2}$
		as in \eqref{eq:coordChange} with $(u_{02},u_{12},u_{22})=p$, $\Im(L\cdot A)
		= M_{A}(p)$. Therefore
		\[
			f_{L\cdot A}(t,x,y) = \det(tI + x\Re(L\cdot A) + y M_{A}(p)).
		\]
		By the Laplace expansion of the determinant, we see that the degree of $f_{L\cdot
		A}$ in the variable $y$, denoted $\deg_{y}(f_{L\cdot A})$, is $\leq{\rm rank}
		(M_{A}(p))$. Conversely, the restriction $f_{L\cdot A}(t,0,y)$ factors as $t^{d}
		\prod_{i}(t+\lambda_{i} y)$ where the product is taken over the nonzero eigenvalues
		of $M_{A}(p)$. This shows that
		$\deg_{y}(f_{L\cdot A}) \geq{\rm rank}(M_{A}(p))$. Since $f_{L\cdot A}(t,x,y)$
		is homogeneous of degree $n$, $n - \deg_{y}(f_{L\cdot A})$ is the quantity
		in (iii). Similarly $n-{\rm rank}(M_{A}(p))$ is the quantity in (i).

		(i)$=$(ii) Similarly, for $q\in \R^{3}$, $f_{A}(rq+sp) = \det(rM_{A}(q) + s M
		_{A}(p))$. By the Laplace expansion of the determinant, we see that the degree
		of $s$ in this polynomial is $\leq{\rm rank}(M_{A}(p))$. Since it is
		homogeneous of degree $n$ in $r,s$, it is therefore divisible by $r^{d}$ where
		$d = n -{\rm rank}(M_{A}(p))$. Conversely, for $q = (1,0,0)$, this
		restriction factors as $r^{d}\prod_{i}(r+\lambda_{i} s)$ where the product
		is taken over the nonzero eigenvalues of $M_{A}(p)$.
	\end{proof}

	\section{The dual convex cone of rank-$k$ numerical range}
	\label{sec:DualCone}

	Li and Sze \cite{Li2008} give a description \eqref{eq:half_plane} of $\Lambda_{k}
	(A)$ as the intersection of halfplanes. Taking $\mu = a+\ii b$, we rewrite this
	description as
	\begin{equation}
		\label{eq:half_plane2}\Lambda_{k}(A) = \left\{a+\ii b\in \C : a \cos(\theta)
		+ b \sin(\theta) \leq \lambda_{k}(\theta) \text{ for all }\theta\in [0,2\pi)
		\right\}
	\end{equation}
	where $\lambda_{k}(\theta)$ is short-hand notation for $\lambda_{k}(\cos(\theta
	) \Re A + \sin(\theta) \Im A).$ Using this description, we use convex duality
	to study $\Lambda_{k}(A)$. For convenience, we lift $\Lambda_{k}(A)$ to a
	convex cone in $\R^{3}$. Namely, let
	\[
		\widehat{\Lambda_k(A)}=\{(\lambda, \lambda a, \lambda b):a+\ii b\in \Lambda_{k}
		(A), \lambda \in \R_{\geq 0}\}
	\]
	when $\Lambda_{k}(A)$ is nonempty. When $\Lambda_{k}(A)$ is empty, we use the
	convention $\widehat{\Lambda_k(A)}= \{(0,0,0)\}$.

	To understand the dual cone, consider the closed curve in $\R^{3}$ parameterized
	by
	\begin{equation}
		\label{eq:Ok(A)}O_{k}(A) = \{(\lambda_{k}(\theta), -\cos \theta, -\sin\theta
		):\theta \in [0,2\pi) \} .
	\end{equation}
	This curve is contained in the set of points $(t,x,y)\in \R^{3}$ with $f_{A}(t,
	x,y)=0$ and $x^{2}+y^{2}=1$.

	\begin{theorem}
		\label{th:dual_cone} The convex cone $\widehat{\Lambda_k(A)}$ has the inequality
		description
		\[
			\widehat{\Lambda_k(A)}= \{(c,a,b) : c\geq 0 \text{ and }\cos(\theta)a+\sin(
			\theta)b\leq \lambda_{k}(\theta)c\text{ for all }\theta\in [0,2\pi)\}
		\]
		and its dual cone is given by
		\[
			(\widehat{\Lambda_k(A)})^{*} = \overline{\cone(\{(1,0,0)\} \cup O_k(A))}.
		\]
	\end{theorem}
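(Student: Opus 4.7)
The plan is first to translate the half-plane description \eqref{eq:half_plane2} of $\Lambda_k(A)$ into the stated inequality description of the lifted cone $\widehat{\Lambda_k(A)}$, and then to invoke Corollary~\ref{cor:DualConicalHull} to read off the dual cone.

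For the inequality description I would verify both inclusions directly. One direction is by scaling: if $(c, ca, cb)\in \widehat{\Lambda_k(A)}$ with $c \geq 0$ and $a + \ii b \in \Lambda_k(A)$, then multiplying the inequalities $\cos(\theta)a + \sin(\theta)b \leq \lambda_k(\theta)$ from \eqref{eq:half_plane2} by $c$ gives exactly the stated inequalities. Conversely, given $(c,a,b)$ satisfying them, the case $c > 0$ reduces to \eqref{eq:half_plane2} after dividing by $c$, showing that $a/c + \ii b/c \in \Lambda_k(A)$. The remaining case $c = 0$ forces $\cos(\theta) a + \sin(\theta) b \leq 0$ for every $\theta\in[0,2\pi)$; choosing $\theta$ so that $(\cos\theta,\sin\theta)$ points in the direction of $(a,b)$ then yields $\sqrt{a^2+b^2}\leq 0$, so $(a,b) = (0,0)$ and the point is the apex of $\widehat{\Lambda_k(A)}$. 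The same argument handles the empty case $\Lambda_k(A) = \emptyset$, in which no $c > 0$ slice is feasible and only $(0,0,0)$ satisfies the inequalities, matching the convention $\widehat{\Lambda_k(A)} = \{(0,0,0)\}$.

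For the dual cone I would then rewrite each inequality in the form $\langle x, (c,a,b)\rangle \geq 0$: the constraint $c \geq 0$ contributes the normal $(1,0,0)$, while the family $\cos(\theta)a + \sin(\theta)b \leq \lambda_k(\theta) c$ contributes $(\lambda_k(\theta), -\cos\theta, -\sin\theta)$ for $\theta\in [0,2\pi)$. The collection of these inequality normals is precisely $\{(1,0,0)\}\cup O_k(A)$, and Corollary~\ref{cor:DualConicalHull} applied to this description immediately gives $(\widehat{\Lambda_k(A)})^{*} = \overline{\cone(\{(1,0,0)\}\cup O_k(A))}$.

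The only real obstacle is the boundary behavior at $c = 0$, together with the degenerate case $\Lambda_k(A) = \emptyset$. This is not formal: it rests on the specific fact that the halfplanes in \eqref{eq:half_plane2} are indexed by a full circle of directions, so that no nontrivial direction $(a,b)$ can satisfy all the inequalities at $c = 0$. Once the inequality description is established, the dual computation is a direct invocation of the biduality machinery already packaged in Corollary~\ref{cor:DualConicalHull}.
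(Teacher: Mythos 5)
Your proof is correct and follows essentially the same path as the paper's: establish the inequality description of $\widehat{\Lambda_k(A)}$ via Li--Sze, then apply Corollary~\ref{cor:DualConicalHull}. The only cosmetic difference is in the $c=0$ case, where you pick the single optimizing direction $\theta$ with $(\cos\theta,\sin\theta)$ parallel to $(a,b)$ while the paper tests the four cardinal directions; both force $(a,b)=(0,0)$, and your explicit handling of $\Lambda_k(A)=\emptyset$ is a welcome small addition.
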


	\begin{proof}
		From \Cref{cor:DualConicalHull}, it suffices to prove the inequality
		description for $\widehat{\Lambda_k(A)}$ and the description of
		$(\widehat{\Lambda_k(A)})^{*}$ follows. The containment $\subseteq$ follows directly
		from the description of \eqref{eq:half_plane} of Li and Sze.

		For the reverse containment, suppose that $(c,a,b)$ satisfies $c\geq 0$ and $\cos
		(\theta)a+\sin(\theta)b\leq \lambda_{k}(\theta)c\text{ for all }\theta\in [0,
		2\pi)$. If $c >0$, then $\frac{1}{c}(a,b)$ satisfies all the inequalities that
		define $\Lambda_{k}(A)$ and so $(c,a,b) = c(1,a/c,b/c)$ belongs to $\widehat{\Lambda_k(A)}$.
		If $c = 0$, we claim that $(a,b) = (0,0)$. Taking
		$\theta \in \{0, \pi/2, \pi, 3\pi/2\}$, we see that $(a,b)$ must satisfy the
		inequalities $-a \geq 0$, $-b \geq 0$, $a \geq 0$ and $b\geq 0$, which imply
		$(a,b) = (0,0)$. Therefore $(c,a,b) = (0,0,0)\in \widehat{\Lambda_k(A)}$.
	\end{proof}

	\begin{remark}
		Since $\Lambda_{k}(A)$ is compact, $\widehat{\Lambda_k(A)}$ is a closed, pointed
		cone. That is, it is closed and does not contain any line through the origin.
	\end{remark}

	\begin{corollary}
		A point $a+\ii b$ belongs to $\Lambda_{k}(A)$ if and only if the linear function
		$t+ax + by$ is nonnegative on $O_{k}(A)$.
	\end{corollary}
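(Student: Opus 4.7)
The plan is to apply biduality to the description of $\widehat{\Lambda_k(A)}^{*}$ given in \Cref{th:dual_cone}. First I would translate membership in $\Lambda_k(A)$ into membership in the lifted cone: by definition, $a+\ii b\in \Lambda_k(A)$ if and only if $(1,a,b)\in \widehat{\Lambda_k(A)}$. Since $\widehat{\Lambda_k(A)}$ is a closed convex cone (as noted in the remark following \Cref{th:dual_cone}), biduality gives $\widehat{\Lambda_k(A)} = (\widehat{\Lambda_k(A)}^{*})^{*}$, so this membership is equivalent to $\langle (1,a,b), v\rangle \geq 0$ for every $v\in \widehat{\Lambda_k(A)}^{*}$.

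Next I would insert the explicit description $\widehat{\Lambda_k(A)}^{*} = \overline{\cone(\{(1,0,0)\}\cup O_k(A))}$ from \Cref{th:dual_cone}. The linear functional $(t,x,y)\mapsto t+ax+by$ is continuous, so it is nonnegative on this closed conical hull if and only if it is nonnegative on the generating set $\{(1,0,0)\}\cup O_k(A)$. The value at $(1,0,0)$ is $1\geq 0$, which holds automatically, so the only nontrivial constraint is nonnegativity of $t+ax+by$ on $O_k(A)$.

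Unpacking this, the condition becomes $\lambda_k(\theta) - a\cos\theta - b\sin\theta \geq 0$ for all $\theta\in[0,2\pi)$, which matches the inequality description of $\Lambda_k(A)$ from \eqref{eq:half_plane2}. This yields the desired equivalence in both directions.

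There is no genuine obstacle here; the corollary is a direct packaging of \Cref{th:dual_cone} through biduality. The only minor point requiring care is noting that checking the linear functional on the curve $O_k(A)$ suffices to control it on the closed conical hull, which is immediate from linearity and continuity, and that the extra ray generated by $(1,0,0)$ contributes a vacuous inequality.
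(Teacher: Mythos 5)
Your proposal is correct, but it takes a longer, more roundabout route than the paper. The paper's proof is a single sentence: the condition that $t+ax+by$ is nonnegative on $O_k(A)$ unpacks, by the parametrization of $O_k(A)$, to $\lambda_k(\theta) - a\cos\theta - b\sin\theta \geq 0$ for all $\theta$, which is \emph{verbatim} the Li--Sze inequality description $\eqref{eq:half_plane2}$, so the equivalence is immediate. You instead pass to the lifted cone $\widehat{\Lambda_k(A)}$, invoke biduality (using that the cone is closed), substitute the explicit description of $(\widehat{\Lambda_k(A)})^{*}$ from \Cref{th:dual_cone}, and observe that the extra generator $(1,0,0)$ imposes a vacuous constraint. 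This is all sound, but it re-derives something that is already the input to \Cref{th:dual_cone}: the inequality description in that theorem was itself obtained from $\eqref{eq:half_plane2}$, so going forward through the theorem and then backward through biduality is circular in spirit, though not logically. Your detour does have some pedagogical value in making explicit the duality picture — in particular why $(1,0,0)$ had to be included in the generators — but for establishing the corollary it is strictly more machinery than needed.
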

	\begin{proof}
		This follows directly from the description \eqref{eq:half_plane} of Li and
		Sze.
	\end{proof}

	The following example shows why the inclusion of $(1,0,0)$ in \Cref{th:dual_cone}
	is necessary:
	\begin{example}
		\label{ex:OkPlane} Consider the $3\times 3$ matrix $A =
		\begin{pmatrix}
			0 & 0 & 0 \\
			0 & 0 & 2 \\
			0 & 0 & 0
		\end{pmatrix}$. The Kippenhahn polynomial is
		$f_{A}(t,x,y) = t(t^{2} -x^{2} - y^{2})$. One can check that $\lambda_{2}(\theta
		) = 0$ for all $\theta \in [0,2\pi)$ and therefore $\Lambda_{2}(A) = \{0\}$.
		The conical hull of $O_{2}(A)$ is therefore the plane $\{c = 0\}$ and so the
		dual cone of this conical hull is the whole real line spanned by $(1,0,0)$. This
		strictly contains $\widehat{\Lambda_2(A)}$, which is the ray
		$\{(c,0,0):c\geq 0\}$. The conical hull of $\{(1,0,0)\}\cup O_{2}(A)$ is the
		halfspace $\{(t,x,y): t\geq 0\}$, whose dual cone is
		$\widehat{\Lambda_2(A)}$, as promised by \Cref{th:dual_cone}.
	\end{example}

	In \Cref{ex:OkPlane}, the curve $O_{k}(A)$ was contained in a plane. As we will
	see below, this is the only way for $(\widehat{\Lambda_k(A)})^{*}$ to differ
	from the closure of the conical hull of $O_{k}(A)$. To see this, we prove some
	very simple but useful inequalities.

	\begin{lemma}
		\label{le:antipodal_eigenvalue} For any $\theta\in [0,2\pi)$,
		\begin{align*}
			\lambda_{k}(\theta) + \lambda_{k}(\theta+\pi) & \geq 0 \text{ for }k \leq (n+1)/2, \text{ and } \\
			\lambda_{k}(\theta) + \lambda_{k}(\theta+\pi) & \leq 0 \text{ for }k \geq (n+1)/2.
		\end{align*}
	\end{lemma}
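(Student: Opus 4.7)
The key observation is that $\theta+\pi$ simply negates the matrix in the definition of $\lambda_k(\theta)$. Concretely, setting $M(\theta) = \cos(\theta)\Re A + \sin(\theta)\Im A$, we have $M(\theta+\pi) = -M(\theta)$ because $\cos(\theta+\pi) = -\cos(\theta)$ and $\sin(\theta+\pi) = -\sin(\theta)$. So the plan is to reduce the statement to an elementary fact about eigenvalues of $M$ and $-M$.

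If $\lambda_1(M) \geq \lambda_2(M) \geq \cdots \geq \lambda_n(M)$ are the eigenvalues of a Hermitian matrix $M$, then the eigenvalues of $-M$ in decreasing order are $-\lambda_n(M) \geq -\lambda_{n-1}(M) \geq \cdots \geq -\lambda_1(M)$. Hence
\[
\lambda_k(-M) = -\lambda_{n+1-k}(M).
\]
Applying this to $M = M(\theta)$ yields
\[
\lambda_k(\theta) + \lambda_k(\theta+\pi) = \lambda_k(M(\theta)) - \lambda_{n+1-k}(M(\theta)).
\]

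Now the two inequalities follow by case analysis on the relationship between $k$ and $n+1-k$, using only the monotonicity of the eigenvalue sequence. If $k \leq (n+1)/2$, then $n+1-k \geq k$, so $\lambda_{n+1-k}(M(\theta)) \leq \lambda_k(M(\theta))$, giving the nonnegative difference. If $k \geq (n+1)/2$, then $n+1-k \leq k$, so $\lambda_{n+1-k}(M(\theta)) \geq \lambda_k(M(\theta))$, giving the nonpositive difference. There is no real obstacle: the proof is essentially the identity $\lambda_k(-M) = -\lambda_{n+1-k}(M)$ combined with monotonicity of the eigenvalues, and fits in a few lines.
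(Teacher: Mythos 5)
Your proof is correct and is essentially the same as the paper's: both rely on the identity $\lambda_k(\theta+\pi) = -\lambda_{n-k+1}(\theta)$ (i.e., $\lambda_k(-M) = -\lambda_{n+1-k}(M)$) together with monotonicity of the ordered eigenvalues and a short case check on $k$ versus $n+1-k$.
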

	\begin{proof}
		Note that $k \leq (n+1)/2$ if and only if $k \leq n-k+1$. By definition, we have
		that $\lambda_{k}(\theta) \geq \lambda_{n-k+1}(\theta)$. Also we have
		$\lambda_{k}(\theta + \pi) = -\lambda_{n-k+1}(\theta).$ Together this gives
		\[
			\lambda_{k}(\theta) + \lambda_{k}(\theta+\pi) = \lambda_{k}(\theta) - \lambda
			_{n-k+1}(\theta) \geq 0.
		\]
		The second statement follows analogously by taking the inequality
		$\lambda_{k}(\theta) \leq \lambda_{n-k+1}(\theta)$.
	\end{proof}

	\begin{corollary}
		\label{cor:Duals} Suppose that $O_{k}(A)$ is not contained in a plane through
		the origin in $\R^{3}$. If $k \leq (n+1)/2$, then
		\[
			(\widehat{\Lambda_k(A)})^{*} = \overline{\cone(O_k(A))}
		\]
		and if $k \geq (n+1)/2$, then $(\widehat{\Lambda_k(A)})^{*} = \R^{3}$, $\widehat
		{\Lambda_k(A)}= \{(0,0,0)\}$, and $\Lambda_{k}(A)=\emptyset$.
	\end{corollary}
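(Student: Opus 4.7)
The starting point is \Cref{th:dual_cone}, which gives $(\widehat{\Lambda_k(A)})^{*} = \overline{\cone(\{(1,0,0)\} \cup O_k(A))}$, so the entire corollary is about when the extra generator $(1,0,0)$ may be dropped (Case 1) or when, on the contrary, everything collapses (Case 2). In both cases the idea is to apply \Cref{le:antipodal_eigenvalue} by pairing each $\theta$ with its antipode $\theta+\pi$ on the curve $O_k(A)$: the sum of the two corresponding points, $(\lambda_k(\theta),-\cos\theta,-\sin\theta) + (\lambda_k(\theta+\pi),\cos\theta,\sin\theta) = (\lambda_k(\theta)+\lambda_k(\theta+\pi),0,0)$, always lies on the line through $(1,0,0)$.

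For $k \leq (n+1)/2$, the antipodal sum $\lambda_k(\theta)+\lambda_k(\theta+\pi)$ is $\geq 0$ by \Cref{le:antipodal_eigenvalue}. If this sum is strictly positive for some $\theta$, the computation above exhibits $(1,0,0)$ as a positive scalar multiple of an element of $\cone(O_k(A))$, hence $(1,0,0)\in\cone(O_k(A))$. If instead the sum is identically zero, then the point at $\theta+\pi$ is the negative in $\R^3$ of the point at $\theta$, so $O_k(A)$ is centrally symmetric about the origin. A convex cone generated by a centrally symmetric set is closed under negation and is therefore a linear subspace, and the hypothesis that $O_k(A)$ is not contained in a plane through the origin forces this subspace to be all of $\R^3$. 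Either way $(1,0,0)$ already lies in $\cone(O_k(A))$, so it is redundant and $(\widehat{\Lambda_k(A)})^{*} = \overline{\cone(O_k(A))}$.

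For $k \geq (n+1)/2$, I would prove the stronger statement $\Lambda_k(A) = \emptyset$; the remaining claims then follow from the convention $\widehat{\Lambda_k(A)} = \{(0,0,0)\}$, whose dual cone is all of $\R^3$. Suppose for contradiction that $a+\ii b \in \Lambda_k(A)$. By \eqref{eq:half_plane2} we have $a\cos\theta + b\sin\theta \leq \lambda_k(\theta)$ for every $\theta$. Writing this inequality at $\theta$ and at $\theta+\pi$ and summing gives $0 \leq \lambda_k(\theta)+\lambda_k(\theta+\pi)$, while \Cref{le:antipodal_eigenvalue} gives the reverse inequality, so equality holds throughout. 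This yields $\lambda_k(\theta) = a\cos\theta + b\sin\theta$ for all $\theta$, which is exactly the statement that the linear form $t+ax+by$ vanishes on every point of $O_k(A)$. Hence $O_k(A)$ lies in a plane through the origin, contradicting the hypothesis.

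The only delicate step is the subcase of Case 1 in which the antipodal sum is identically zero: there one must notice that central symmetry upgrades $\cone(O_k(A))$ to $\Span(O_k(A))$ before invoking non-coplanarity. Beyond that, the argument is a direct combination of \Cref{th:dual_cone}, the two inequalities of \Cref{le:antipodal_eigenvalue}, and the halfplane description \eqref{eq:half_plane2}.
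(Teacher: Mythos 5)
Your proof is correct, and it takes a genuinely different route from the paper's in both halves. For $k \leq (n+1)/2$, the paper argues by contradiction: supposing $(1,0,0) \notin \overline{\cone(O_k(A))}$, it produces a separating hyperplane $ct+ax+by$ with $c<0$ and then combines the non-coplanarity hypothesis with \Cref{le:antipodal_eigenvalue} to reach a contradiction. You instead give a constructive argument, exhibiting $(1,0,0)$ as a positive rescaling of the sum of a pair of antipodal points of $O_k(A)$; the degenerate subcase where all antipodal sums vanish is handled by noting that $\cone(O_k(A))$ is then centrally symmetric, hence a linear subspace, hence all of $\R^3$ by non-coplanarity — this subcase is not vacuous (it is forced when $k=(n+1)/2$), so the case split is genuinely needed. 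This buys you an elementary argument with no appeal to hyperplane separation, at the cost of that split. For $k \geq (n+1)/2$, the paper works dually: it uses $O_k(A)=-O_{n-k+1}(A)$ to put $(-1,0,0)$ into the cone, shows $(\widehat{\Lambda_k(A)})^* = \R^3$, and then derives $\widehat{\Lambda_k(A)}=\{0\}$ and $\Lambda_k(A)=\emptyset$ via biduality. You go the other direction, proving $\Lambda_k(A)=\emptyset$ directly from the halfplane description \eqref{eq:half_plane2} and the reverse inequality in \Cref{le:antipodal_eigenvalue}, then reading off the remaining claims from the convention $\widehat{\Lambda_k(A)}=\{(0,0,0)\}$. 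Both are sound; the paper's treatment is more structurally parallel between the two halves, while yours is more hands-on and avoids both separation and biduality.
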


	\begin{proof}
		We show that when $k \leq (n+1)/2$, $(1,0,0)$ belongs to the closed convex
		cone $C = \overline{\cone(O_k(A))}$. Suppose, for the sake of contradiction,
		that it does not. Then there exists a hyperplane separating these sets. That
		is, there exists $(c,a,b)\in \R^{3}$ with $ct+ax+by \geq 0$ for all
		$(t,x,y) \in C$ and $c<0$.

		Since $O_{k}(A)$ is not contained in the plane $\{(t,x,y): ct+ax+by=0\}$, there
		must be some point of $O_{k}(A)$ not on this plane. That is, there is some $\theta
		\in [0,2\pi)$ for which
		\[
			\lambda_{k}(\theta)c -\cos(\theta)a-\sin(\theta)b > 0.
		\]
		Since $ct+ax+by$ is nonnegative on $O_{k}(A)$, we also have that
		\[
			\lambda_{k}(\theta+\pi)c + \cos(\theta)a+\sin(\theta)b = \lambda_{k}(\theta
			+\pi)c - \cos(\theta +\pi)a-\sin(\theta+\pi)b \geq 0.
		\]
		Adding these two inequalities gives that
		$c(\lambda_{k}(\theta) + \lambda_{k}(\theta + \pi)) > 0$. Since $c<0$, this
		contradicts the first inequality in \Cref{le:antipodal_eigenvalue} when
		$k \leq (n+1)/2$. Therefore $(1,0,0)\in \overline{\cone(O_k(A))}$. The claim
		then follows from \Cref{th:dual_cone}.

		For $k \geq (n+1)/2$, we use that $O_{k}(A) = - O_{n-k+1}(A)$ and $n-k+1\leq
		(n+1)/2$. By the arguments above, the point $(-1,0,0)$ belongs to $C$. The
		cone $\overline{\cone(\{(1,0,0)\}\cup O_k(A))}$ therefore contains the whole
		real line $\{(t,0,0): t \in \mathbb{R}\}$. By \Cref{th:dual_cone}, this cone
		coincides with $(\widehat{\Lambda_k(A)})^{*}$. From this and the parametrizaton
		of $O_{k}(A)$, we see that it also contains the set
		$\{(0,-\cos(\theta), -\sin(\theta)) : \theta\in [0,2\pi)\}$. It follows that
		$(\widehat{\Lambda_k(A)})^{*}$ is all of $\R^{3}$. By the biduality theorem,
		$\widehat{\Lambda_k(A)}= \{(0,0,0)\}$. It follows that
		$\Lambda_{k}(A)=\emptyset$.
	\end{proof}

	\begin{example}
		\label{ex:pringle} The $4\times 4$ symmetric matrix
		\[
			A =
			\begin{pmatrix}
				0 & 2   & 0   & 0   \\
				2 & 0   & \ii & 0   \\
				0 & \ii & 0   & 1   \\
				0 & 0   & 1   & 0\

			\end{pmatrix}
			\ \text{ gives }\  f_{A} =\det
			\begin{pmatrix}
				t       & 2x      & 0       & 0       \\
				\! 2x\! & t       & \! y \! & 0       \\
				0       & \! y \! & t       & \! x \! \\
				\!0 \!  & 0       & \! x \! & t
			\end{pmatrix}
			= t^{4} - 5 t^{2} x^{2} + 4 x^{4} - t^{2} y^{2}.
		\]
		\Cref{fig:pringle} shows the curves $O_{k}(A)$ for $k=1,2,3,4$ as well as the
		convex hull of $O_{2}(A)$. Every point $(t,x,y)$ in the curve $O_{1}(A)$
		satisfies $t\geq 1$. Its convex hull does not contain the origin and its conical
		hull is a pointed convex cone given by
		\[
			(\widehat{\Lambda_1(A)})^{*} = \{(t,x,y)\in \R^{3} : f_{A}(t,x,y)\geq 0, 2
			t^{2} - 5 x^{2} - y^{2}\geq 0, t\geq0 \}.
		\]
		On the other hand, the convex hull of $O_{2}(A)$ has a family of edges whose
		tangent planes approach $\{c=0\}$. The plane $\{c=0\}$ meets this convex body
		in an edge between the two points $(0,\pm1,0)$. The \emph{conical hull} of $O
		_{2}(A)$ is therefore $\{(t,x,y): t>0\} \cup \{(0,x,0): x\in \R\}$. The
		closure is the half-plane
		$(\widehat{\Lambda_2(A)})^{*} = \{(t,x,y): t\geq0\}$. The dual cone is the ray
		$\widehat{\Lambda_2(A)}= \{(c,0,0): c\geq 0\}$, giving
		$\Lambda_{2}(A) = \{0\}\subset \C$.
	\end{example}

	\begin{figure}
		\includegraphics[height=2in]{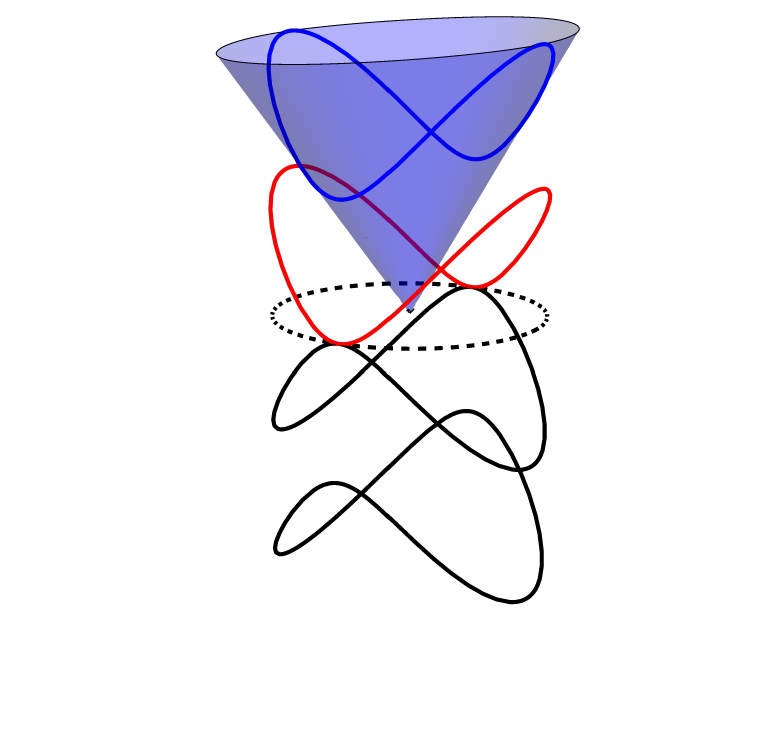}
		\includegraphics[height=2in]{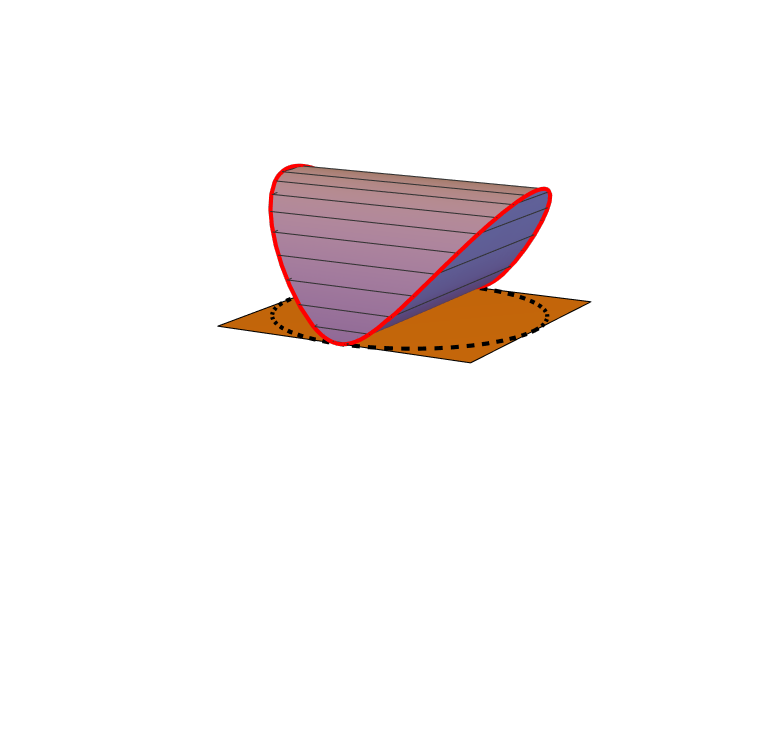}
		\includegraphics[height=2in]{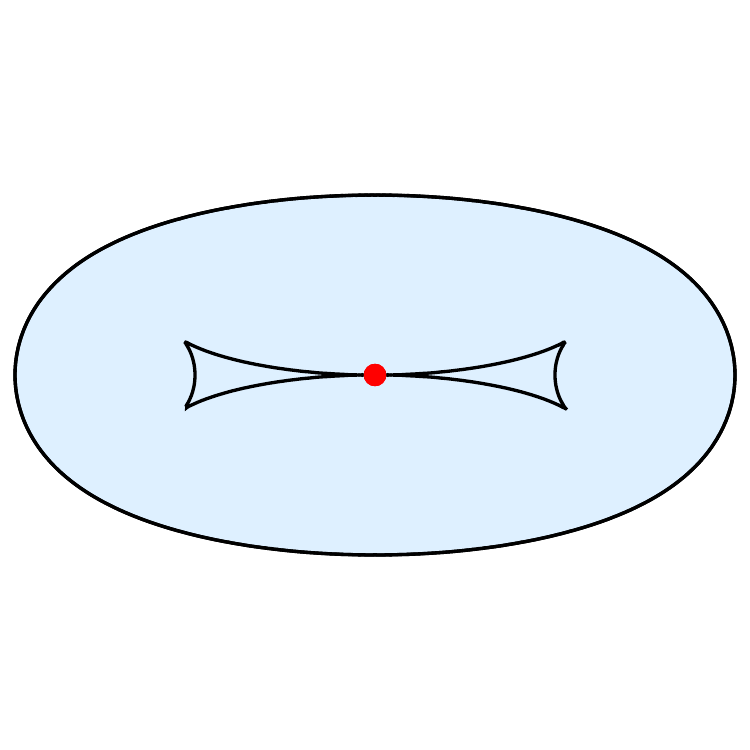}
		\caption{The curves $O_{k}(A)$ from \Cref{ex:pringle} with the cone
		$(\widehat{\Lambda_1(A)})^{*}$, the convex hull of $O_{2}(A)$, and higher-rank
		numerical ranges $\Lambda_{1}(A)$ and $\Lambda_{2}(A)$.}
		\label{fig:pringle}
	\end{figure}

	\section{A membership test for the rank-$k$ numerical range}
	\label{sec:MembershipTest}

	In this section we describe an algorithm for testing membership of a given
	point $a+\ii b$ in $\Lambda_{k}(A)$. This relies on the description of the dual
	cone from \Cref{th:dual_cone}. Note that a point $(1,a,b)$ belongs to
	$\widehat{\Lambda_k(A)}$ if and only if the functional $\ell(t,x,y) = t+ax+by$
	is nonnegative on $O_{k}(A)$. This results in many different equivalent conditions
	for membership in $\Lambda_{k}(A)$.

	\begin{theorem}[Membership test]
		\label{thm:MembershipTest} Let $(a,b) \in \mathbb{R}^{2}$ and $\ell= t +ax +
		by$. The following are equivalent:
		\begin{itemize}
			\item[(a)] $a+ \ii b \in \Lambda_{k}(A)$,

			\item[(b)] $tI_{n} +x\Re A + y \Im A$ has at most $n-k$ strictly positive eigenvalues
				for all $(t,x,y) \in \mathcal{V}_{\mathbb{R}}(\ell)$,

			\item[(c)] $tI_{n} +x \Re A + y \Im A$ has at most $n-k$ strictly negative
				eigenvalues for all $(t,x,y) \in \mathcal{V}_{\mathbb{R}}(\ell)$,

			\item[(d)] $\lambda_{n-k+1}(x\Re A + y\Im A ) \leq ax + by$ for all $(x,y)
				\in \mathbb{R}^{2}$,

			\item[(e)] $\lambda_{k}( x \Re A + y\Im A ) \geq ax + by$ for all $(x,y) \in
				\mathbb{R}^{2}$, and

			\item[(f)] $\lambda_{k}( \Re A + y\Im A ) \geq a + by$ and $\lambda_{n-k+1}
				( \Re A + y\Im A ) \leq a + by$ for all $y \in \mathbb{R}$.
		\end{itemize}
	\end{theorem}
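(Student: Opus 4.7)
The plan is to take (e) as a central hub and show each of (a), (b), (c), (d), (f) equivalent to it, using throughout the identity $\lambda_k(-M) = -\lambda_{n-k+1}(M)$ for Hermitian $M$ together with the positive homogeneity of $\lambda_k$ in $M$.

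First I would establish (a) $\iff$ (e) by comparing with the Li--Sze description \eqref{eq:half_plane2}. Writing $(x,y) = r(\cos\theta, \sin\theta)$ for $r \geq 0$ and $\theta \in [0, 2\pi)$ shows by homogeneity that (e) restricted to the half $r \geq 0$ is the same as the Li--Sze condition $a\cos\theta + b\sin\theta \leq \lambda_k(\theta)$; the case $r < 0$ is obtained via the substitution $\theta \mapsto \theta + \pi$ together with the antipodal identity, so requiring (e) for all $(x,y) \in \mathbb{R}^2$ is in fact equivalent to \eqref{eq:half_plane2}, not merely implied by it. The equivalence (d) $\iff$ (e) then follows from replacing $(x,y)$ with $(-x,-y)$ in either condition and applying the antipodal identity once.

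For (b) $\iff$ (d) and (c) $\iff$ (e), I would parametrize $\mathcal{V}_{\mathbb{R}}(\ell)$ by $t = -(ax+by)$, so the eigenvalues of $tI_n + x\Re A + y\Im A$ become $\lambda_i(x\Re A + y\Im A) - (ax+by)$. Since $\lambda_1 \geq \cdots \geq \lambda_n$, having at most $n-k$ strictly positive eigenvalues is equivalent to $\lambda_{n-k+1}(x\Re A + y\Im A) \leq ax + by$, which is (d); and having at most $n-k$ strictly negative eigenvalues is equivalent to $\lambda_k(x\Re A + y\Im A) \geq ax + by$, which is (e).

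Finally, for the equivalence with (f), I would use homogeneity to reduce (e) to $x \in \{1, 0, -1\}$. The case $x = 1$ is exactly the first inequality in (f); the case $x = -1$, rewritten via $\lambda_k(-M) = -\lambda_{n-k+1}(M)$ and then dividing through by $-1$, is exactly the second; and $x = 0$ follows from these two by sending $y \to \pm\infty$, since both sides of each inequality grow linearly in $y$ and the leading-order comparison reduces to the required inequalities between $b$ and $\lambda_k(\Im A)$, $\lambda_{n-k+1}(\Im A)$. The main obstacle throughout is consistent sign bookkeeping when switching between $\lambda_k$ and $\lambda_{n-k+1}$; once the antipodal identity is applied uniformly, each step is a short direct manipulation.
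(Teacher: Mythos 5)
Your proposal is correct and follows essentially the same route as the paper: it uses (e) as the hub, gets (a)$\Leftrightarrow$(e) from the Li--Sze description and positive homogeneity, gets (d)$\Leftrightarrow$(e) from $\lambda_k(-M) = -\lambda_{n-k+1}(M)$, parametrizes $\mathcal{V}_\R(\ell)$ by $t = -(ax+by)$ to translate (b),(c) into inequalities on $\lambda_{n-k+1}$ and $\lambda_k$, and reduces (e)$\Leftrightarrow$(f) by positive homogeneity to the cases $x = \pm 1$ with a separate argument for $x=0$. The only cosmetic difference is in how that last case is handled: the paper closes the $x=0$ gap by continuity of $\lambda_k(x\Re A + y\Im A)$ in $(x,y)$, whereas you propose sending $y \to \pm\infty$ in (f), which works but requires you to justify the asymptotic $\lambda_k(\Re A + y\Im A)/y \to \lambda_k(\Im A)$ (again via homogeneity and continuity); the paper's continuity-in-$(x,y)$ argument is a touch more direct.
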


	\begin{proof}
		(a)$\Leftrightarrow$(e) The $\Rightarrow$ direction follows directly from the
		description \eqref{eq:half_plane2} of $\Lambda_{k}(A)$. For the converse,
		suppose $(x,y)\in \R^{2}$. We can write $(x,y) = r(\cos\theta,\sin\theta)$ for
		some $\theta \in [0,2\pi)$ and $r\in \R_{\geq 0}$. By \eqref{eq:half_plane2},
		$\lambda_{k}(\theta)\geq a\cos(\theta) + b\sin(\theta)$. Rescaling both sides
		by $r$ gives the claim.

		(d)$\Leftrightarrow$(e) For any Hermitian matrix $M$,
		$\lambda_{k}(-M) = -\lambda_{n-k+1}(M)$. Since $\R^{2}$ is closed under negation
		and $(-x)\Re A + (-y)\Im A = -(x\Re A + y\Im A )$, the claim follows.

		(d,e)$\Leftrightarrow$(f) Specializing to $x=1$ gives the forward direction.
		Conversely, suppose that (f) holds and take $(x,y)\in \R^{2}$. If $x > 0$,
		then
		\[
			\lambda_{k}(x \Re A + y \Im A) = x\lambda_{k}(\Re A + (y/x) \Im A) \geq x(a
			+b(y/x)) = ax +by.
		\]
		The inequality follows from (f). Similarly, if $x < 0$, then
		\begin{align*}
			\lambda_{k}(x \Re A + y \Im A) & = -x\lambda_{k}(-\Re A - (y/x) \Im A)                               \\
			                               & = x \lambda_{n-k+1}(\Re A + (y/x) \Im A) \geq x(a+b(y/x)) = ax +by.
		\end{align*}
		Finally, we note that $\lambda_{k}(x \Re A + y \Im A)$ is a continuous
		function of $(x,y)$. Since it is $\geq ax+by$ for all $(x,y)$ with $x\neq 0$,
		it is $\geq ax+by$ for any $(x,y)$. This shows that condition (e) holds,
		which implies (d) as above.

		(c)$\Leftrightarrow$(e) Condition (c) is equivalent to the condition that
		$\lambda_{k}(tI_{n} +x \Re A + y \Im A) \geq 0$ for all
		$(t,x,y)\in \mathcal{V}_{\mathbb{R}}(\ell)$. Note that $(t,x,y)\in\mathcal{V}
		_{\mathbb{R}}(\ell)$ if and only if $t = -ax-by$. We see that
		\[
			\lambda_{k}((-ax-by)I_{n} +x \Re A + y \Im A) = -ax-by + \lambda_{k}(x \Re
			A + y \Im A).
		\]
		This is $\geq 0$ if and only if $\lambda_{k}(x \Re A + y \Im A) \geq ax+by$.

		(b)$\Leftrightarrow$(c) Since $\mathcal{V}_{\mathbb{R}}(\ell)$ is closed
		under scaling, this follows from $\lambda_{k}(-M) = -\lambda_{n-k+1}(M)$.
	\end{proof}

	These equivalences reduce the problem of deciding whether an element $a + \ii b$
	belongs to $\Lambda_{k}(A)$ to checking the signs of the eigenvalues in the matrix
	pencil $- (a+by) I_{n}+ \Re A + y\Im A$. An eigenvalue can only change signs
	at the roots of the determinant
	$\det(-(a+by)I_{n}+\Re A + y\Im A ) = f_{A}(-a-by,1,y)$. It therefore suffices
	to check the signature of this matrix pencil in between the roots of this univariate
	polynomial.

	One subtlety is that the restriction of $f_{A}$ to the line $t+ax+by=0$ could be
	identically zero, in which case $\ell = t+ax+by$ is a factor of $f_{A}$. To deal
	with this, we first factor out all powers of $\ell$ from $f_{A}$. This gives the
	following algorithm:

	\begin{algorithm}
		[H]
		\caption{Membership test
		\smallskip
		\\
		\textbf{Input:} $A\in \C^{n\times n}$, $k \in \mathbb{Z}$ with
		$1\leq k\leq n$, $a+\ii b \in \mathbb{C}$ \\
		\textbf{Output:} ``True'' if $a+\ii b \in \Lambda_{k}(A)$, ``False'' otherwise.
		\smallskip
		}
		\label{alg:membership}
		\begin{algorithmic}
			[0] \State Take $f= \det(tI_{n} + x\Re A + y\Im A)$ and $\ell= t + ax + by$.
			\State Let $d= \max \{e \in \mathbb{N}: \ell^{e} \text{ divides }f\}$ and
			define $\tilde{f}:= f/\ell^{d}$. \State Let $h(y) = \tilde{f}(-a-by,1,y) \in
			\mathbb{R}[y]$ and compute the distinct real roots $r_{1} < \ldots < r_{m}$
			of $h$. \If{ $h(y)$ has no real roots} \State define $s_{0}=0$ \Else \State
			define $s_{0} = r_{1}-1$ and $s_{m} = r_{m} + 1$. \For{$i = 1, \hdots, m-1$}
			\State define $s_{i} = (r_{i} + r_{i+1})/2$. \EndFor \EndIf \For{$i = 0, \hdots, m$}
			\If{ $\lambda_{k}(\Re A + s_{i} \Im A ) < a+ bs_{i} \text{ or }\lambda_{n-k+1}(\Re A + s_{i} \Im A ) > a + bs_{i}$ }
			\State\Return{``False''} and {\bf stop} \EndIf \EndFor \State \Return{``True''}
		\end{algorithmic}
	\end{algorithm}

	\begin{proposition}
		\Cref{alg:membership} correctly determines membership in $\Lambda_{k}(A)$.
	\end{proposition}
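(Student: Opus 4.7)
The plan is to reduce \Cref{thm:MembershipTest}(f) --- the condition that $\lambda_k(\Re A + y \Im A) \geq a + by$ and $\lambda_{n-k+1}(\Re A + y \Im A) \leq a + by$ for every $y \in \R$ --- to checks at finitely many sample values of $y$. Correctness of the algorithm then follows from the equivalence (a)$\Leftrightarrow$(f) of \Cref{thm:MembershipTest}.

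For each fixed $y \in \R$, since $\Re A + y \Im A$ is Hermitian, the algebraic multiplicity of $a+by$ as one of its eigenvalues equals the multiplicity of $t + a + by$ as a factor of $f_A(t, 1, y) \in \R[t]$. Writing $f_A = \ell^d \tilde f$ with $\ell \nmid \tilde f$, we have $f_A(t, 1, y) = (t + a + by)^d \tilde f(t, 1, y)$, so this eigenvalue multiplicity is at least $d$ for every $y$ and equals $d$ exactly when $h(y) = \tilde f(-a - by, 1, y)$ is nonzero. Moreover $h$ is not the zero polynomial: were it identically zero, the homogeneous polynomial $\tilde f$ would vanish on the affine line $\{(-a-by,\, 1,\, y) : y \in \R\}$ and hence on the entire projective line $\V(\ell)$, forcing $\ell \mid \tilde f$. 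So $h$ has only finitely many real roots $r_1 < \cdots < r_m$.

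Next, consider the integer-valued functions
\[
N_+(y) = \#\{j : \lambda_j(\Re A + y \Im A) > a + by\}, \qquad N_-(y) = \#\{j : \lambda_j(\Re A + y \Im A) < a + by\}.
\]
On each connected component of $\R \setminus \{r_1, \ldots, r_m\}$, the multiplicity of $a+by$ as an eigenvalue is constantly $d$, so at every $y$ in the component the set $\{j : \lambda_j(y) = a + by\}$ is a block of exactly $d$ consecutive indices; continuity of the eigenvalues forces this block to be the same across the whole component, and the remaining $n-d$ eigenvalues stay strictly separated from $a+by$. Hence $N_+$ and $N_-$ are constant on each component. The two inequalities in (f) translate respectively to $N_-(y) \leq n - k$ and $N_+(y) \leq k - 1$, both of which are then constant on each component. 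The algorithm's test points $s_0, \ldots, s_m$ pick exactly one representative from each such component.

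Putting this together: if both inequalities hold at every $s_i$, they hold throughout each open component, and by continuity of the eigenvalues they extend to the boundary points $r_i$, establishing condition (f). Conversely, if (f) holds then it holds at each $s_i$, so the algorithm returns ``True''. The main subtle step is the multiplicity bookkeeping in the second paragraph --- translating the global polynomial identity $f_A = \ell^d \tilde f$ with $\ell \nmid \tilde f$ into the pointwise statement that $a + by$ has eigenvalue multiplicity exactly $d$ off the real roots of $h$ --- which hinges on Hermiticity (to equate eigenvalue and factor multiplicities) together with the homogeneity argument above showing $h \not\equiv 0$.
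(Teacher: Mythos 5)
Your proposal is correct and follows essentially the same route as the paper: both arguments reduce to showing that the sign data of the eigenvalues of $\Re A + y\Im A$ relative to $a+by$ is locally constant on $\R\setminus\{r_1,\dots,r_m\}$, precisely because multiplicity can only jump at roots of $h$. Your observation that $h\not\equiv 0$ (via homogeneity and $\ell\nmid\tilde f$) is a small but worthwhile detail that the paper leaves implicit; otherwise the two arguments differ only in bookkeeping, with you tracking the counting functions $N_\pm(y)$ where the paper tracks signs of the shifted eigenvalues $\lambda_j\bigl((-a-by)I_n+\Re A+y\Im A\bigr)$ directly.
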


	\begin{proof}
		If the algorithm returns ``False'', then, for some $i$,
		$\lambda_{k}(\Re A + s_{i} \Im A ) < a+ bs_{i}$ or
		$\lambda_{n-k+1}(\Re A + s_{i} \Im A ) > a + bs_{i}$. By \Cref{thm:MembershipTest},
		$a+\ii b$ does not belong to $\Lambda_{k}(A)$.

		Conversely, if $a+\ii b\not\in \Lambda_{k}(A)$, then
		$\lambda_{k}(\Re A + s \Im A ) < a+ bs \text{ or }\lambda_{n-k+1}(\Re A + s \Im
		A ) > a + bs$
		for some $s\in \R$. Since this is an open condition, we can assume that $s$
		is not a root of $h$. It follows that $s$ belongs to some connected component
		$I$ of $\R\backslash \{r_{1}, \hdots, r_{m}\}$, which has some
		representative $s_{i}\in I$. We claim that, for any $j$, the value of
		\[
			\lambda_{j}((-a-by)I_{n} + \Re A + y\Im A) = -a-by + \lambda_{j}(\Re A + y\Im
			A)
		\]
		is either identically zero or has constant sign on the open interval $I$.
		Note that by definition
		\[
			f(t-a-by, 1,y) = t^{d}\tilde{f}(t-a-by, 1,y) = \det((t-a-by)I_{n} + \Re A +
			y\Im A).
		\]
		For given $y$, the roots of this polynomial (in $t$) are the negatives of the
		eigenvalues of the matrix $(-a-by)I_{n} + \Re A + y\Im A$. The values of $y$
		for which $t=0$ is a root of multiplicity $>d$ are exactly the roots of
		$h(y) = \tilde{f}(-a-by, 1,y)$. Therefore for $y\in I$, the roots of
		$\tilde{f}(-a-by, 1,y)$ are nonzero and have constant sign. It follows that for
		$y\in I$, $\lambda_{j}((-a-by)I_{n} + \Re A + y\Im A)$ is either identically
		zero or has constant sign. In particular, since $I$ contains $s$ and $s_{i}$,
		$\lambda_{k}(\Re A + s_{i} \Im A ) < a+ bs_{i} \text{ or }\lambda_{n-k+1}(\Re
		A + s_{i} \Im A ) > a + bs_{i}$ and the algorithm returns ``False''.
	\end{proof}

	We finish this section by illustrating the membership test on a small example.

	\begin{example}
		\label{ex:circleandline} Consider the $3\times 3$ matrix $A =
		\begin{pmatrix}
			1 & 0 & 0   \\
			0 & 0 & 1/2 \\
			0 & 0 & 0
		\end{pmatrix}$, $k=2$, and $a+\ii b = 1+0 \ii$. The Kippenhahn polynomial is
		$f(t,x,y) = \frac{1}{16}(t+x)(16t^{2} - x^{2}-y^{2})$. We take $\ell = t+ax+b
		y= t+x$. Note that $\ell$ is the highest power of $\ell$ that divides $f$, so
		$\tilde{f}= f/\ell = \frac{1}{16}(16t^{2} - x^{2}-y^{2})$. Then
		\[
			h(y) = \tilde{f}(-1,1,y) = \frac{1}{16}(16(-1)^{2} - 1^{2}-y^{2}) = \frac{1}{16}
			(15-y^{2})
		\]
		which has $m=2$ real roots, $r_{1}=-\sqrt{15}$ and $r_{2}=\sqrt{15}$. From
		this, we define three test points $s_{0} = -\sqrt{15}-1$, $s_{1} = 0$, and $s
		_{2} = \sqrt{15}+1$. For any $y\in \R$, the matrix
		\[
			(-a-by)I_{n} + \Re A + y\Im A =
			\begin{pmatrix}
				0 & 0                 & 0                 \\
				0 & -1                & \frac{1-\ii y}{4} \\
				0 & \frac{1+\ii y}{4} & -1
			\end{pmatrix}
		\]
		has eigenvalues $0,-1\pm\frac{1}{4}\sqrt{1+y^{2}}$. In particular, for $y=s_{1}
		= 0$, it has two negative eigenvalues. By \Cref{thm:MembershipTest}, we
		conclude that $1\not\in\Lambda_{2}(A)$. The curve $\V(f_{A})$ is shown in \Cref{fig:lineandcircle}
		in the chart $\{t=1\}$. Connected components of the complement are labelled
		with the signs of the eigenvalues of the matrix $I_{n} + x\Re A + y\Im A$.
	\end{example}

	\section{The algebraic boundary of $\Lambda_{k}(A)$}
	\label{sec:algBoundary}

	In this section we define a polynomial $g_{A}\in \R[a,b]$ that vanishes on the
	boundary of $\Lambda_{k}(A)$ for any $k$. This polynomial will be well-defined
	whenever $f_{A}$ is not a power of a linear form. We define $g_{A}$ as the
	minimal polynomial vanishing on the dual curve of $\V(f_{A})$ and all lines
	coming from singularities of $f_{A}$. Formally, we compute $g_{A}$ as follows.
	Let $f = f_{A}^{\rm red}$ denote the squarefree part of the factorization of $f
	_{A}$, i.e.~the minimal polynomial vanishing on $\V(f_{A})$. Consider the
	ideal $I\subset \R[t,x,y,a,b]$ given by
	\[
		I = \langle f, \ t +ax + by \rangle + \left\langle 2\times 2 \ {\rm minors}
		\begin{pmatrix}
			1                             & a                             & b                             \\
			\frac{\partial f}{\partial t} & \frac{\partial f}{\partial x} & \frac{\partial f}{\partial y}
		\end{pmatrix}\right\rangle
	\]
	and the ideal $J$ obtained from $I$ by saturating by the irrelevant ideal
	$\langle t,x,y\rangle$ and then eliminating the variables $(t,x,y)$:
	\[
		J ={\rm radical}\left({\rm eliminate}\left(I:\langle t,x,y\rangle^{\infty}, \{
		t,x,y\}\right)\right).
	\]

	We claim that this is a principal ideal whose variety consists of the dual
	curve of $\V(f_{A})$ and lines corresponding to singularities of $\V(f_{A})$.

	\begin{lemma}
		\label{lem:g_A well defined} If $f_{A}$ is not a power of a linear form,
		then the ideal $J$ above is principal and
		\begin{equation}
			\label{eq:VgA}\V(J) = \V(f_{A})^{*} \cup \bigcup_{[p_0:p_1:p_2]\in \Sing(f_A)}
			\V(p_{0}+p_{1}a+p_{2}b).
		\end{equation}
	\end{lemma}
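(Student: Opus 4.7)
The plan is to identify $\V_{\C}(J)$ with the Zariski closure of the projection of $\V_{\C}(I) \setminus \V_{\C}(\langle t,x,y\rangle)$ onto the $(a,b)$-plane. By standard elimination theory, saturating at $\langle t,x,y\rangle$ and then eliminating $(t,x,y)$ realizes this closure as $\V_{\C}(J)$, so the task reduces to describing this image set-theoretically and verifying that it has pure dimension one.

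For a point $((t,x,y),(a,b)) \in \V_{\C}(I)$ with $(t,x,y) \neq 0$, the $2 \times 2$ minor condition says $(1,a,b)$ is proportional to $\nabla f(t,x,y)$. I would split into two cases according to whether $p = (t,x,y)$ is nonsingular on $\V(f)$. If $p$ is nonsingular, then $\nabla f(p) \neq 0$ and proportionality forces $\partial_t f(p) \neq 0$, in which case $(a,b) = (\partial_x f(p)/\partial_t f(p), \partial_y f(p)/\partial_t f(p))$ is uniquely determined, and Euler's identity $n f(p) = t\partial_t f(p) + x\partial_x f(p) + y\partial_y f(p) = 0$ makes $t + ax + by = 0$ automatic. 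If $p = (p_0, p_1, p_2)$ is singular then the minor condition is vacuous and only $t + ax + by = 0$ survives, cutting out the line $\V(p_0 + p_1 a + p_2 b)$ in the $(a,b)$-plane. Projecting and taking the Zariski closure, the nonsingular locus yields $\V(f_A)^*$ by its definition as the closure of $p \mapsto [\nabla f(p)]$, while the singular points yield the claimed union of lines, establishing~\eqref{eq:VgA}.

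To verify principality of the radical ideal $J$ in $\C[a,b]$, I would show $\V_{\C}(J)$ has pure dimension one. Each irreducible factor of $f^{\rm red}$ of degree $\geq 2$ contributes an irreducible plane curve to $\V(f_A)^*$, and the lines from singularities are already curves, so the only potential source of isolated points is the dual of a linear irreducible factor $f_i = c_0 t + c_1 x + c_2 y$, which is a single point. The hypothesis that $f_A$ is not a power of a single linear form ensures $f^{\rm red}$ has a second irreducible factor $f_j \neq f_i$; by Bezout in $\PP^2$, $\V(f_i) \cap \V(f_j)$ is nonempty, and every point $p = (p_0,p_1,p_2)$ of this intersection is a singular point of $\V(f_A)$. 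Since $c_0 p_0 + c_1 p_1 + c_2 p_2 = f_i(p) = 0$, the dual point of $\V(f_i)$ lies on $\V(p_0 + p_1 a + p_2 b)$ when $c_0 \neq 0$; when $c_0 = 0$ the dual point lies at infinity in the chart $c = 1$ and is not in $\V_{\C}(J) \subset \C^2$ at all. Thus every candidate isolated point is absorbed into one of the lines, $\V_{\C}(J)$ has pure dimension one, and its radical vanishing ideal in the two-dimensional UFD $\C[a,b]$ is therefore principal.

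The main obstacle is passing cleanly from this set-theoretic description to the ideal-theoretic statement: one must ensure that saturation and elimination on $I$ yield, after taking radicals, exactly the vanishing ideal of the variety above, with no missing components or embedded primes distorting the count. Checking generic reducedness of $I : \langle t,x,y\rangle^{\infty}$ along each component arising in the decomposition, together with the standard correspondence between elimination ideals and Zariski closures of projections over $\C$, should close this gap.
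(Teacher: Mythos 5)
Your proposal is correct and takes essentially the same approach as the paper: describe $\V(J)$ by splitting over whether the witness point on $\V(f^{\rm red}_A)$ is singular, and establish principality by showing each isolated dual point of a linear factor is absorbed into a singular-point line via Bezout. One small slip to note: $J$ lives in $\R[a,b]$, not $\C[a,b]$; the cleanest fix is to observe that since $J$ is radical and $\V_{\C}(J)$ is pure of dimension one, every minimal prime of $J$ in $\R[a,b]$ has height one and is therefore principal in the UFD $\R[a,b]$, so $J$ (their intersection) is principal.
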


	\begin{proof}
		We first show that the variety of $J$ is as claimed. As in the construction
		of $J$, let $f = f_{A}^{\rm red}$ denote the squarefree part of the factorization
		of $f_{A}$. By construction, the variety of $J$ (over $\C$) consists of the set
		of $(a,b)\in \C^{2}$ for which there exists $p = (p_{0},p_{1},p_{2})\in \C^{3}
		\backslash \{(0,0,0)\}$ and $\lambda\in \C$ with $f(p)=0$,
		$p_{0}+ap_{1}+b p_{2}=0$, and $\lambda(1,a,b) = \nabla f(p)$. Under the
		additional restriction that $\nabla f(p)$ is nonzero, this is exactly the dual
		curve of $\V(f)$. We see that for any singular point $p$, any point $(a,b)\in
		\C^{2}$ satisfying $p_{0}+p_{1}a+p_{2}b=0$ belongs to $\V(J)$, since
		$0\cdot(1,a,b) = \nabla f(p)$.

		To show that $\V(J)$ is principal, it suffices to show that its variety is a
		union of plane curves. Suppose
		$f = \ell_{1}\cdots \ell_{r} \cdot f_{1}\cdots f_{s}$ where
		$\ell_{1}, \hdots, \ell_{r}$ are linear forms and $f_{1}, \hdots, f_{s}$
		have degree $\geq 2$. The dual curve of $f$ is the union of the dual curves of
		its factors, i.e.~$\V(f)^{*} = \left(\cup_{i=1}^{r} \V(\ell_{i})^{*}\right) \cup
		\left(\cup_{j=1}^{s} \V(f_{j})^{*}\right)$. For each $i$ and $j$, $\V(\ell_{i}
		)^{*}$ is a point and $\V(f_{j})^{*}$ is a curve. It suffices to show that
		each point $\V(\ell_{i})^{*}$ is contained in $\V(p_{0}+p_{1}a+p_{2}b)$ for
		some singular point $p$ of $\V(f)$. To do this, consider the factorization
		$f = \ell_{i} \cdot h$. By assumption, $f_{A}$ is not a power of a linear
		form and so its reduced polynomial $f$ is not linear, implying that $\deg(h)\geq
		1$. It follows that the intersection of $\V(\ell_{i})$ and $\V(h)$ in
		$\PP^{2}(\C)$ is nonempty. Let $p$ be a point in this intersection. Then $p$
		will be a singular point of $\V(f)$. Moreover, since $\ell_{i}(p) =0$, the point
		$\V(\ell_{i})^{*}$ is contained in the line $\V(p_{0}+p_{1}a+p_{2}b)$.
		Therefore the variety of $J$ is
		\[
			\V(J) = \left(\bigcup_{j=1}^{s} \V(f_{j})^{*}\right) \cup \left(\bigcup_{[p_0:p_1:p_2]\in
			\Sing(f_A)}\V(p_{0}+p_{1}a+p_{2}b)\right),
		\]
		which is a union of plane curves. It follows that $J$ is a principal ideal.
	\end{proof}

	\begin{definition}[The polynomial $g_{A}$]
		\label{def:g_A} Let $A\in \C^{n\times n}$ and suppose that $f_{A}$ is not a power
		of a linear form. Define the polynomial $g_{A} \in \R[a,b]$ to be the
		generator of the ideal $J$ above. When $\V(f_{A})$ is singular, the variety
		of $g_{A}$ includes both the dual curve of $\V(f_{A})$ and lines
		corresponding to its singular points.
	\end{definition}

	\begin{lemma}
		\label{lem:lambda_diff} Let $f = f_{A}^{\rm red}$. If $\nabla f$ is nonzero
		at $p =(\lambda_{k}(x^{*},y^{*}), -x^{*},-y^{*})$ for some
		$(x^{*},y^{*})\in \R^{2}$, then the function $\lambda_{k}(x,y)$ is differentiable
		at $(x^{*},y^{*})$ with
		\[
			\frac{\partial \lambda_{k}}{\partial x}(x^{*},y^{*}) = \frac{\frac{\partial
			f}{\partial x}(p)}{\frac{\partial f}{\partial t}(p)}\ \ \text{ and }\ \ \frac{\partial
			\lambda_{k}}{\partial y}(x^{*},y^{*}) = \frac{\frac{\partial f}{\partial y}(p)}{\frac{\partial
			f}{\partial t}(p)}.
		\]
	\end{lemma}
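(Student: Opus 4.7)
The strategy is to apply the implicit function theorem to $f = f_A^{\rm red}$ at the point $p' = -p = (-\lambda_k(x^*,y^*), x^*, y^*)$. First I would observe that, since $\lambda_k(x,y)$ is an eigenvalue of $x\Re A + y\Im A$, the identity $f_A(-\lambda_k(x,y), x, y) = 0$ holds for every $(x,y)\in \R^2$, and hence also $f(-\lambda_k(x,y), x, y) = 0$, as $f$ shares its zero set with $f_A$. By homogeneity of $f$ of degree $n$, each partial derivative $\partial f/\partial s$ is homogeneous of degree $n-1$, so $\nabla f(p') = (-1)^{n-1}\nabla f(p) \neq 0$; in particular, the ratios $(\partial f/\partial x)/(\partial f/\partial t)$ and $(\partial f/\partial y)/(\partial f/\partial t)$ take the same value at $p$ and at $p'$. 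The formulas in the lemma implicitly require $\partial f/\partial t(p) \neq 0$, i.e.\ that $-\lambda_k(x^*,y^*)$ be a simple root of $f(\cdot, x^*, y^*)$ as a univariate polynomial in $t$.

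Under this nondegeneracy, the implicit function theorem applied at $p'$ yields a smooth function $\phi$ on some open neighborhood $U$ of $(x^*,y^*)$, with $\phi(x^*,y^*) = -\lambda_k(x^*,y^*)$, such that on some neighborhood $V$ of $-\lambda_k(x^*,y^*)$ the value $\phi(x,y)$ is the unique root of $f(\cdot, x, y) = 0$. Since the eigenvalues of a Hermitian matrix vary continuously with its entries, $(x,y)\mapsto \lambda_k(x,y)$ is continuous, so after shrinking $U$ we may assume $-\lambda_k(x,y)\in V$ for all $(x,y)\in U$. Combined with $f(-\lambda_k(x,y), x, y) = 0$ and the local uniqueness of $\phi$, this forces $-\lambda_k(x,y) = \phi(x,y)$ throughout $U$, and in particular $\lambda_k$ is smooth at $(x^*, y^*)$.

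Implicit differentiation of $f(\phi(x,y), x, y) \equiv 0$ at $(x^*,y^*)$ then gives
\[
  \frac{\partial \phi}{\partial x}(x^*,y^*) = -\frac{\partial f/\partial x(p')}{\partial f/\partial t(p')} \ \ \text{and}\ \ \frac{\partial \phi}{\partial y}(x^*,y^*) = -\frac{\partial f/\partial y(p')}{\partial f/\partial t(p')}.
\]
Using $\lambda_k = -\phi$ on $U$ together with the homogeneity identity above, the two minus signs cancel and the ratios may be evaluated at $p$ instead of $p'$, yielding the formulas stated in the lemma. The main technical step is the identification $-\lambda_k \equiv \phi$ on a neighborhood: it is exactly here that continuity of $\lambda_k$ is used to pin down which root of $f(\cdot, x, y) = 0$ it equals, and the isolation of the branch $\phi$ (which is what lets uniqueness do its work) is provided by $\partial f/\partial t(p)\neq 0$. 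Note that even when $\lambda_k(x^*,y^*)$ is a multiple eigenvalue of $x^*\Re A + y^*\Im A$, the argument still goes through, provided the multiplicity of $-\lambda_k(x^*,y^*)$ as a root of $f_A(\cdot, x^*, y^*)$ comes from a single irreducible factor appearing to a power, so that the reduced polynomial $f$ remains smooth at $p$.
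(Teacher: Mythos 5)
The overall skeleton of your argument matches the paper's (both are implicit-function-theorem arguments built on $f(-\lambda_k(x,y),x,y)\equiv 0$), and your bookkeeping with $p'=-p$, the uniqueness-via-continuity identification of the implicit branch with $-\lambda_k$, and the sign-tracking under homogeneity are all correct. But there is a genuine gap: you write ``The formulas in the lemma implicitly require $\partial f/\partial t(p)\neq 0$'' and then proceed \emph{under that assumption}. The lemma does not assume this; its only hypothesis is $\nabla f(p)\neq 0$. For a general homogeneous polynomial these are not equivalent — $f=ty-x^2$ at $p=[1:0:0]$ is a smooth point with $\partial_t f(p)=0$ — so the implication $\nabla f(p)\neq 0\Rightarrow \partial_t f(p)\neq 0$ is a substantive claim that your proof must establish before the implicit function theorem is available.

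What saves the day is that $f=f_A^{\rm red}$ inherits from $f_A=\det(tI+x\Re A+y\Im A)$ the real-zero (hyperbolic-in-direction-$(1,0,0)$) property: for every real $(x_0,y_0)$, the univariate polynomial $f(t,x_0,y_0)$ has only real roots and has top-degree coefficient $f(1,0,0)\neq 0$. For such polynomials, a smooth real point cannot have a tangent line in the $(1,0,0)$ direction: if it did, then as one slides the vertical line $\{(t,p_1+s,p_2)\}$ off the tangency, roots near $p_0$ would have to leave the real axis in conjugate pairs for $s$ on at least one side, contradicting real-rootedness. The paper delegates exactly this step to \cite[Lemma~2.4]{PLAUMANN201348}. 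Without some version of this argument (or that citation), the reduction to the implicit function theorem is unjustified, and the rest of your proof — though otherwise sound — is conditional on a fact you have not proved.
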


	\begin{proof}
		Suppose that $\nabla f(p)$ is nonzero. By \cite[Lemma~2.4]{PLAUMANN201348}, $\frac{\partial
		f}{\partial t}$ is nonzero at $p$. Note that for any $(x,y)\in \R^{2}$, the point
		$(\lambda_{k}(x,y), -x,-y)$ belongs to the variety of $f$. The result then
		follows from the implicit function theorem.
	\end{proof}

	\begin{theorem}
		\label{thm:boundary} If $a+ \ii b$ belongs to $\partial\Lambda_{k}(A)$, then
		$g_{A}(a,b)=0$.
	\end{theorem}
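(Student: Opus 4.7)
The plan is to exhibit, for any boundary point $a+\ii b \in \partial \Lambda_k(A)$, a point $p \in \R^3 \setminus \{0\}$ with $(p,a,b) \in \V(I)$, so that $(a,b)$ projects to a point of $\V(J) = \V(g_A)$.

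First, I would use the halfplane description~\eqref{eq:half_plane2} to find a supporting angle. Since $\Lambda_k(A)$ is closed and $a+\ii b$ lies on its boundary, some defining inequality must be tight: there exists $\theta^* \in [0,2\pi)$ with $\lambda_k(\theta^*) = a\cos\theta^* + b\sin\theta^*$. Otherwise the continuous function $\theta \mapsto \lambda_k(\theta) - a\cos\theta - b\sin\theta$ would be strictly positive on the compact set $[0,2\pi)$, and a small open neighborhood of $a+\ii b$ would still satisfy every inequality in \eqref{eq:half_plane2}, contradicting that $a+\ii b$ is a boundary point. Let $f = f_A^{\rm red}$, $(x^*,y^*) = (\cos\theta^*, \sin\theta^*)$, and
\[ p := (\lambda_k(x^*,y^*),\, -x^*,\, -y^*). \]
Then $f(p)=0$ by definition of $\lambda_k$, and $p_0 + ap_1 + bp_2 = \lambda_k(x^*,y^*) - ax^* - by^* = 0$, so the two explicit generators of $I$ vanish at $(p,a,b)$.

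Next, I would split on whether $\nabla f(p)$ is zero. If $\nabla f(p) = 0$, then $[p_0:p_1:p_2] \in \Sing(f_A)$, and by \Cref{lem:g_A well defined} the line $\{(a',b') : p_0 + a'p_1 + b'p_2 = 0\}$ is a component of $\V(g_A)$; it contains $(a,b)$, so $g_A(a,b) = 0$. Otherwise $\nabla f(p) \neq 0$. By \Cref{thm:MembershipTest}(e), the function $\phi(x,y) := \lambda_k(x,y) - (ax+by)$ is nonnegative on $\R^2$, and by construction $\phi(x^*,y^*) = 0$, so $(x^*,y^*)$ is a global minimizer of $\phi$. By \Cref{lem:lambda_diff}, $\lambda_k$ is differentiable at $(x^*,y^*)$, hence $\nabla \phi(x^*,y^*) = 0$. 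Plugging in the formulas from \Cref{lem:lambda_diff} yields $\partial f/\partial x(p) = a\cdot \partial f/\partial t(p)$ and $\partial f/\partial y(p) = b\cdot \partial f/\partial t(p)$, so $(1,a,b)$ is a nonzero scalar multiple of $\nabla f(p)$ and all $2\times 2$ minors in the definition of $I$ vanish at $(p,a,b)$. Therefore $(p,a,b) \in \V(I)$, and since $(p_1,p_2) = (-x^*,-y^*) \neq 0$ we have $p \neq 0$, so $(p,a,b)$ survives the saturation by $\langle t,x,y\rangle^{\infty}$; its image under the elimination of $(t,x,y)$ shows $(a,b) \in \V(J) = \V(g_A)$.

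The main delicate point is the critical-point argument: we need $\lambda_k$ to actually be differentiable at $(x^*,y^*)$, which is not automatic for eigenvalue functions at crossings. The case split on $\nabla f(p)$ is designed so that \Cref{lem:lambda_diff} applies precisely when needed, while the singular case contributes exactly to the linear components of $\V(g_A)$ that \Cref{lem:g_A well defined} built into the definition of $J$ in the first place.
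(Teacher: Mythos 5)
Your proof is correct and takes essentially the same route as the paper: find a tight supporting angle $\theta^*$, split on whether $p = (\lambda_k(\theta^*), -\cos\theta^*, -\sin\theta^*)$ is a singular point of $\V(f_A^{\rm red})$, handle the singular case via the linear components of $\V(g_A)$, and in the smooth case use the global-minimum condition together with \Cref{lem:lambda_diff} to conclude $(1,a,b) \parallel \nabla f_A^{\rm red}(p)$. The only difference is cosmetic: the paper phrases the first step as "$\ell$ is nonnegative on $O_k(A)$ and vanishes somewhere on it," while you reach the same point directly from the halfplane description and then trace ideal membership through $I$, saturation, and elimination rather than citing $\V(f_A)^* \subseteq \V(g_A)$ directly.
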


	\begin{proof}
		Since $a+\ii b\in \Lambda_{k}(A)$, the linear function
		$\ell = t + a x + b y$ is nonnegative on the curve
		$O_{k}(A) = \{(\lambda_{k}(\theta), -\cos(\theta), -\sin(\theta)) : \theta\in
		[0, 2\pi)\} \subset \V(f_{A})$. Moreover, since it belongs to the boundary
		of $\Lambda_{k}(A)$, $\ell$ must be zero at some point $p = (p_{0},p_{1},p_{2}
		)\in O_{k}(A)$ (otherwise $\ell$ is positive on $O_{k}(A)$ and any small
		enough perturbation of $\ell$ would remain positive).

		If $p$ is a singular point of $\V(f_{A})$, then the linear form
		$p_{0}+p_{1}a+p_{2}b$ is a factor of the polynomial $g_{A}$. Since $0=\ell(p)
		= p_{0} + a p_{1} + b p_{2}$, $(a,b)$ belongs to the variety of $g_{A}$.

		Now, suppose that $\nabla f_{A}^{\rm red}(p)$ is nonzero and consider $F_{(a,b)}
		:\R^{2}\to \R$ given by function
		\[
			F_{(a,b)}(x,y) = \ell(\lambda_{k}(x,y), -x, -y) = \lambda_{k}(x,y) -ax -by.
		\]
		Since $\lambda_{k}(\mu x,\mu y) = \mu\lambda_{k}( x,y)$ for any $\mu \in \R_{>0}$
		and $\ell$ is nonnegative on $O_{k}(A)$, $F_{(a,b)}$ is nonnegative on all
		of $\R^{2}$. Moreover, it is zero at $(p_{1},p_{2})$. By Lemma~\ref{lem:lambda_diff},
		$F_{(a,b)}$ is also differentiable at $(p_{1}, p_{2})$ with
		\[
			\frac{\partial F_{(a,b)}}{\partial x}(p_{1}, p_{2}) = \frac{\frac{\partial
			f_{A}^{\rm red}}{\partial x}(p)}{\frac{\partial f_{A}^{\rm red}}{\partial
			t}(p)}-a \ \ \ \ \text{ and }\ \ \ \
\frac{\partial F_{(a,b)}}{\partial y}(p
			_{1}, p_{2}) = \frac{\frac{\partial f_{A}^{\rm red}}{\partial y}(p)}{\frac{\partial
			f_{A}^{\rm red}}{\partial t}(p)}-b.
		\]
		Since $F_{(a,b)}$ achieves its minimum value at $(p_{1},p_{2})$, these derivatives
		must be zero, from which we conclude that $(1,a,b)$ is a scalar multiple of
		$\nabla f_{A}^{\rm red}(p)$. Since $p \neq 0$ belongs to $\V(f_{A})$, it
		follows by definition that $(1,a,b)$ belongs to $\mathcal{V}(f_{A})^{*}$.
	\end{proof}

	\begin{example}[\Cref{ex:circleandline} continued]
		Consider the $3\times 3$ matrix $A$ from \Cref{ex:circleandline} with $f_{A}(
		t,x,y) = \frac{1}{16}(t+x)(16t^{2} - x^{2}-y^{2})$. The dual curve of
		$\V(f_{A})$ is the union of the duals of its two components
		$\V(f_{A})^{*} = \{[1:1:0]\} \cup \V(1 - 16a^{2}-16b^{2})$. The curve $\V(f_{A}
		)$ has two singular points, at which the two factors are both zero, namely $[
		1:-1:\pm\sqrt{15}]$. These both contribute a linear factor to $g_{A}$, giving
		\[
			g_{A}(a,b) = (1 - 16a^{2}-16b^{2})(1-a+\sqrt{15}b)(1-a-\sqrt{15}b).
		\]
	\end{example}

	\begin{figure}
		\includegraphics[height=2in]{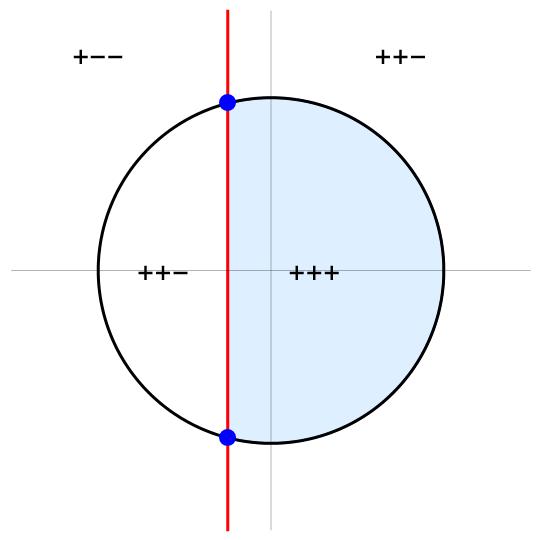}
		\hspace{.5in}
		\includegraphics[height=2in]{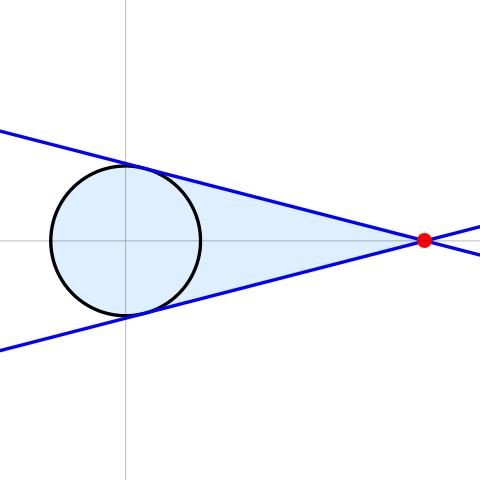}
		\caption{ The curves $\V(f_{A})$ and $\V(g_{A})$ from \Cref{ex:circleandline}}
		. \label{fig:lineandcircle}
	\end{figure}

	\section{Lower dimensional $\Lambda_{k}(A)$}
	\label{sec:Dim01} In this section, we study the geometric and algebraic
	conditions under which $\Lambda_{k}(A)$ is nonempty but has dimension $\leq 1$.
	We do this by considering the convex hull, $\conv(O_{k}(A))$, of the curve $O_{k}
	(A)\subset \R^{3}$ defined in \eqref{eq:Ok(A)}. We will see that when $\Lambda_{k}
	(A)$ has dimension $0$ or $1$, the origin $(0,0,0)$ lies on a face $F$ of
	$\conv(O_{k}(A))$ with $1\leq\dim(F)\leq 2$. We then differentiate when $F$ contains
	\emph{antipodal points} of $O_{k}(A)$ and when it does not. Here, we say that
	$(\lambda_{k}(\theta), -\cos(\theta), -\sin(\theta))$ and
	$(\lambda_{k}(\theta+\pi), -\cos(\theta+\pi), -\sin(\theta+\pi))$ are \emph{antipodal
	points} of $O_{k}(A)$ when $\lambda_{k}(\theta) = \lambda_{n-k+1}(\theta)$, in
	which case,
	\begin{align*}
		(\lambda_{k}(\theta+\pi), -\cos(\theta+\pi), -\sin(\theta+\pi)) & = (-\lambda_{n-k+1}(\theta), \cos(\theta), \sin(\theta)) \\
		                                                                & = (-\lambda_{k}(\theta), \cos(\theta), \sin(\theta)),
	\end{align*}
	giving antipodal points $\pm(\lambda_{k}(\theta), -\cos(\theta), -\sin(\theta))
	.$ This occurs only when the curve $\V(f_{A})$ has a real singularity with a prescribed
	signature. First we consider the dimension of $F$.

	\begin{proposition}
		\label{prop:LowerDimFace} Suppose $O_{k}(A)$ is not contained in a plane through
		the origin and $k\leq (n+1)/2$. Then $\dim(\Lambda_{k}(A))\in \{0,1\}$ if
		and only if $(0,0,0)$ belongs to the relative interior of a face $F$ of
		$\conv(O_{k}(A))$ with $1\leq \dim(F)\leq 2$.
		\begin{itemize}
			\item[(i)] If $\dim(F)=1$, then it is the convex hull of two antipodal
				points on $O_{k}(A)$.

			\item[(ii)] If $\dim(F)=2$ and $F\subseteq \{(t,x,y) : t+ax+by=0\}$, then
				$\Lambda_{k}(A) = \{a+\ii b\}$.
		\end{itemize}
	\end{proposition}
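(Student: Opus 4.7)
The plan is to pass between $\widehat{\Lambda_k(A)}$ and its dual $C := \overline{\cone(O_k(A))}$, identified as a dual pair by Corollary~\ref{cor:Duals} under the hypotheses. Writing $S=\conv(O_k(A))$ and using that $\widehat{\Lambda_k(A)}$ is pointed (since $\Lambda_k(A)$ is compact), biduality gives $\dim L(C) = 3 - \dim \widehat{\Lambda_k(A)}$, where $L(C)$ denotes the lineality space. Since $\dim \widehat{\Lambda_k(A)} = \dim \Lambda_k(A)+1$ when nonempty and $L(C)=\R^3$ when $\Lambda_k(A)=\emptyset$, the condition $\dim \Lambda_k(A)\in\{0,1\}$ translates to $\dim L(C)\in\{1,2\}$.

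The crux will be identifying $\dim L(C)$ with the dimension of the smallest face $F$ of $S$ containing $0$ (when $0\in S$; otherwise $C$ is pointed). For $\mathrm{span}(F)\subseteq L(C)$: since $0\in\mathrm{relint}(F)$, $F$ contains a neighborhood of $0$ in $\mathrm{span}(F)$, so $\R_{\geq 0}\cdot F\supseteq\mathrm{span}(F)$ lies in $\cone(S)\subseteq C$; since $L(C)$ is the largest subspace in $C$, this gives $\mathrm{span}(F)\subseteq L(C)$. For the reverse, I would pass to the quotient $\pi\colon\R^3\to\R^3/\mathrm{span}(F)$: a direct face-check shows that $0$ is an extreme point of $\pi(S)$, and a Carath\'eodory-style argument using compactness of $O_k(A)$ then forces $L(\pi(C))=\{0\}$; combined with the general identity $L(\pi(C))=\pi(L(C))$ this yields $\dim L(C)=\dim F$. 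The same Carath\'eodory argument handles $0\notin S$, where it shows $C$ is pointed.

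For (i), any $1$-dimensional face of $S$ is an edge whose endpoints $p,q$ are extreme points of $S$ and so lie on $O_k(A)$; since $0$ is in the relative interior, $q=-\lambda p$ for some $\lambda>0$, and the parametrization $(\lambda_k(\theta),-\cos\theta,-\sin\theta)$, whose last two coordinates have unit length, forces $\lambda=1$ and $\lambda_k(\theta)+\lambda_k(\theta+\pi)=0$. By \Cref{le:antipodal_eigenvalue} this means $\lambda_k(\theta)=\lambda_{n-k+1}(\theta)$, so $p$ and $q=-p$ are antipodal in the paper's sense. For (ii), the supporting hyperplane cutting out the $2$-dimensional face $F$ is $\{\ell=0\}$ for some linear form $\ell$ nonnegative on $C$; nonnegativity at $(1,0,0)\in C$ forces the $t$-coefficient of $\ell$ to be nonnegative, and the hypothesis $F\subseteq\{t+ax+by=0\}$ identifies $\ell$ with $t+ax+by$ up to a positive scalar, after which the halfplane description \eqref{eq:half_plane2} gives $a+\ii b\in\Lambda_k(A)$; the count $\dim L(C)=2$ then forces $\dim\Lambda_k(A)=0$, so $\Lambda_k(A)=\{a+\ii b\}$.

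The main technical obstacle I anticipate is the lineality--face identification in the second paragraph, particularly the verification $L(\pi(C))=\pi(L(C))$ after the quotient. Everything else amounts to a direct translation between the halfplane description \eqref{eq:half_plane2} of $\Lambda_k(A)$ and the convex geometry of $O_k(A)$.
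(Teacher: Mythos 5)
Your second paragraph hinges on the identification $\dim L(C) = \dim F$, and this is false. The paper's own Example~\ref{ex:pringle} is a counterexample: there $\Lambda_2(A)=\{0\}$, so $\widehat{\Lambda_2(A)}$ is a ray and $\dim L(C) = 3 - 1 = 2$ (indeed $C = \overline{\cone(O_2(A))}$ is the halfspace $\{t\geq 0\}$, whose lineality is the full plane $\{t=0\}$); but the face of $\conv(O_2(A))$ containing $(0,0,0)$ in its relative interior is the segment between the two points of $O_2(A)$ lying in $\{t=0\}$, so $\dim F = 1$. The root of the problem is the ``Carath\'eodory-style argument'' claiming that $0$ being extreme in $\pi(S)$ forces $L(\pi(C))=\{0\}$: because $C=\overline{\cone(O_k(A))}$ involves a closure, a point of $\pi(C)$ need not be an actual conic combination of $\pi(O_k(A))$, so extremality of $0$ in $\pi(S)$ gives no contradiction. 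In the example above, $0$ \emph{is} extreme in $\pi(S)$, yet $\pi(C)$ is a closed halfplane with one-dimensional lineality. As written, the quotient-and-Carath\'eodory plan cannot be repaired because the identity it aims to prove is not true.

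The equivalence in the proposition is still correct, but you should not try to match the two dimensions. Your easy inclusion $\mathrm{span}(F)\subseteq L(C)$ gives $\dim F \leq \dim L(C)$ and handles half the implications. For the rest, what you actually need is: (a) $0 \notin \conv(O_k(A))$ iff $C$ is pointed, which follows from a strict separating hyperplane together with compactness of $O_k(A)$ (and handles $\dim L(C)=0$); and (b) $0 \in \mathrm{int}(\conv(O_k(A)))$ iff $C = \R^3$, since otherwise a supporting hyperplane at $0$ confines $C$ to a closed halfspace (handling $\dim L(C)=3$). Together these give $\dim L(C)\in\{1,2\}$ iff $\dim F\in\{1,2\}$, even though $\dim L(C)$ and $\dim F$ need not coincide in between. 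This is exactly the route the paper's proof takes, characterizing $\dim\Lambda_k(A)=2$ and $\Lambda_k(A)=\emptyset$ separately via the location of the origin relative to $\conv(O_k(A))$. Your handling of (i) is fine, and (ii) goes through once you replace ``the count $\dim L(C)=2$ then forces $\dim\Lambda_k(A)=0$'' by the sandwich $2=\dim F\leq \dim L(C)\leq 2$ (the upper bound coming from $\dim\Lambda_k(A)\in\{0,1\}$, which holds because a $2$-dimensional $F$ is a proper face, so $0\notin\mathrm{int}(\conv(O_k(A)))$).
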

	\begin{proof}
		Recall that the cone over $\Lambda_{k}(A)$,
		$\widehat{\Lambda_k(A)}\subseteq \R^{3}$, has dimension one more than that of
		$\Lambda_{k}(A)$. By \Cref{cor:Duals}, $\widehat{\Lambda_k(A)}$ is the convex
		dual cone of the \emph{conical hull} of the curve $O_{k}(A)$. The cone $\widehat
		{\Lambda_k(A)}$ is full dimensional if and only if its dual cone,
		$\overline{\cone(O_k(A))}$ is pointed, which occurs if and only if $(0,0,0)$
		does not belong to $\conv(O_{k}(A))$. We can therefore assume that $(0,0,0)$
		belongs to $\conv(O_{k}(A))$ and let $F$ be the unique nonempty face of
		$\conv(O_{k}(A))$ containing $(0,0,0)$ in its relative interior.

		Note that $\Lambda_{k}(A) = \emptyset$ is equivalent to
		$\widehat{\Lambda_k(A)}=\{(0,0,0)\}$, which happens if and only if its dual cone,
		$\overline{\cone(O_k(A))}$, is all of $\R^{3}$. This occurs if and only if
		$\conv(O_{k}(A))$ is three-dimensional and $(0,0,0)$ belongs to its interior.
		That is, if $\dim(F)=3$.

		It follows that $\dim(\Lambda_{k}(A)) \in \{0,1\}$ if and only if $(0,0,0)$
		belongs to the boundary of $\conv(O_{k}(A))$. The $0$-dimensional faces of this
		convex body are precisely the points on the curve $O_{k}(A)$. Since
		$(0,0,0)$ does not lie on this curve, it lies on the boundary of $\conv(O_{k}
		(A))$ if and only if it lies in the relative interior of some face $F$ with $1
		\leq \dim(F)\leq 2$.

		($\dim(F)=1$) If $\dim(F)=1$, it is the convex hull of two points
		$(\lambda_{k}(\theta),-\cos (\theta), -\sin(\theta))$ and
		$(\lambda_{k}(\theta'), -\cos (\theta'), -\sin(\theta'))$. Since $(0,0,0)$ is
		a convex combination of these points and the last two coordinates lie on the
		unit circle, these points must be negatives of each other. It follows that $\theta
		' \equiv \theta + \pi \mod 2\pi$ and $\lambda_{k}(\theta+\pi) = -\lambda_{k}(
		\theta)$.

		($\dim(F)=2$) Suppose $\dim(F) = 2$ and $F\subseteq \{(t,x,y) : t+ax+by=0\}$.
		If $a'+\ii b'\in \Lambda_{k}(A)$, then by \eqref{eq:half_plane2}, the linear
		function $t+a'x+b'y$ is nonnegative on $O_{k}(A)$, in which case it must
		vanish identically on $F$. Since $F$ has dimension two, there is a unique
		linear function vanishing on $F$, up to scaling, showing that
		$(a',b') = (a,b)$.
	\end{proof}

	We first explore the case when the face $F$ has no antipodal points. In this
	case, $\Lambda_{k}(A)$ will consist of a single point that corresponds to a tritangent
	line of the curve $\mathcal{V}(f_{A})$ or linear factor of $f_{A}$.

	\begin{definition}
		Let $\ell =t+ax+by$, $f\in \C[t,x,y]$ be homogeneous of degree one and $d$, respectively.
		Let $f^{\rm red}$ denote the square free part of $f$. We say that $\ell$ is \emph{tritangent}
		to $\mathcal{V}(f)$ if the restriction of $f$ to the line $\ell=0$ has
		$\geq 3$ double points. That is,
		$f^{\rm red}(-ax-by,x,y) = (\ell_{1} \cdot \ell_{2} \cdot \ell_{3})^{2} \cdot
		h$
		for some $\ell_{1}, \ell_{2}, \ell_{3}, h\in \C[t,x,y]$.
	\end{definition}

	\begin{theorem}
		\label{thm:tritangent} Suppose $\dim(\Lambda_{k}(A))\in \{0,1\}$ and that $\lambda
		_{k}(\theta) \neq \lambda_{n-k+1}(\theta)$ for all $\theta$. Then
		$\Lambda_{k}(A) = \{a + \ii b\}$ where $t+ax+by$ is either tritangent to
		$\mathcal{V}(f_{A})$ or divides $f_{A}$.
	\end{theorem}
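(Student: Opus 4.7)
The plan is to combine \Cref{prop:LowerDimFace} with the tangency argument from the proof of \Cref{thm:boundary} to locate three double points in the restriction of $f_A^{\rm red}$ to the line $t+ax+by = 0$.

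First I would check the hypotheses of \Cref{prop:LowerDimFace}. Since $\Lambda_k(A)$ is nonempty, \Cref{cor:Duals} forces $k \leq (n+1)/2$. Moreover, if $O_k(A)$ were contained in a plane $\{ct+ax+by=0\}$ through the origin, then adding the defining equation at $\theta$ and $\theta+\pi$ would give $c(\lambda_k(\theta) + \lambda_k(\theta+\pi)) = 0$ for all $\theta$, contradicting the hypothesis (the case $c=0$ collapses, forcing $(a,b)=0$). \Cref{prop:LowerDimFace} then produces a face $F$ of $\conv(O_k(A))$ with $(0,0,0)$ in its relative interior and $\dim(F) \in \{1,2\}$. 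The case $\dim(F)=1$ is ruled out by the same no-antipodal hypothesis, so $\dim(F)=2$ and part (ii) yields $\Lambda_k(A) = \{a+\ii b\}$ where $\ell := t+ax+by$ is the (unique, normalized) linear form vanishing on $F$.

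Next I would restrict $f_A^{\rm red}$ to the line $\ell = 0$ by setting $q(x,y) := f_A^{\rm red}(-ax-by,\, x,\, y)$. If $q \equiv 0$, then $\ell$ divides $f_A^{\rm red}$ and hence $f_A$, giving the second alternative. Otherwise $q$ is a nonzero binary form. Since $F = \conv(F \cap O_k(A))$ is two-dimensional, it has at least three affinely independent extreme points $p_i = (\lambda_k(\theta_i), -\cos\theta_i, -\sin\theta_i) \in O_k(A)$ for $i=1,2,3$. Their images $[-\cos\theta_i : -\sin\theta_i] \in \PP^1$ are three distinct roots of $q$: two projections coincide only when the corresponding $p_i$ are equal or antipodal on $O_k(A)$, both excluded.

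To finish, I would show each such root has multiplicity $\geq 2$. When $\nabla f_A^{\rm red}(p_i) \neq 0$, the proof of \Cref{thm:boundary} applies essentially verbatim: the function $\lambda_k(x,y) - ax - by$ is nonnegative on $\R^2$ by condition (e) of \Cref{thm:MembershipTest}, vanishes at $(-\cos\theta_i, -\sin\theta_i)$, and is differentiable there by \Cref{lem:lambda_diff}; its vanishing gradient forces $(1,a,b)$ to be parallel to $\nabla f_A^{\rm red}(p_i)$, so $\ell$ is tangent to $\V(f_A^{\rm red})$ at $p_i$ and the corresponding root of $q$ is at least double. When instead $p_i$ is a singular point of $\V(f_A^{\rm red})$, any line through $p_i$ automatically meets the curve with multiplicity $\geq 2$. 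Either way, $q$ factors as $(\ell_1\ell_2\ell_3)^2 h$, establishing tritangency. The main technical point is the passage from three affinely independent extreme points of $F$ to three distinct roots of $q$, which is precisely where the hypothesis $\lambda_k(\theta) \neq \lambda_{n-k+1}(\theta)$ is used essentially.
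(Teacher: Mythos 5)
Your proposal is correct and follows essentially the same route as the paper: apply \Cref{prop:LowerDimFace} (with the no-antipodal hypothesis ruling out the one-dimensional face) to reduce to a two-dimensional face $F$ whose plane $\{t+ax+by=0\}$ pins down $\Lambda_k(A) = \{a+\ii b\}$, extract three vertices of $F$ lying on $O_k(A)$, use the no-antipodal hypothesis to show they project to three distinct roots of the restricted polynomial, and argue each root has multiplicity at least two. The only (cosmetic) difference is in the last step: you argue tangency directly via the gradient criterion recycled from the proof of \Cref{thm:boundary}, whereas the paper argues by contradiction that a simple root at a vertex would force $O_k(A)$ to cross the plane $\mathcal{V}(\ell)$ transversally and hence make $\ell$ change sign along $O_k(A)$.
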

	When the curve $O_{k}(A)$ is smooth, this follows from a characterization of faces
	for convex hulls of smooth algebraic curves in $\R^{3}$ \cite{RanestadSturmfels+2012+157+178}.
	In general, this curve may have singularities.

	\begin{proof}
		Note that the assumption $\lambda_{k}(\theta) \neq \lambda_{n-k+1}(\theta)$ guarantees
		that $O_{k}(A)$ is not contained in a plane through the origin.
		\Cref{prop:LowerDimFace} then implies that $(0,0,0)$ is contained in the
		relative interior of a face $F$ of $\conv(O_{k}(A))$ with $\dim(F)=2$ and
		that $\Lambda_{k}(A) = \{a+\ii b\}$, where $t+ax+by=0$ defines the unique plane
		through the origin containing $F$.

		Let $\ell=t+ax+by$ and suppose that $\ell$ does not divide $f_{A}$. It
		follows that there are only finitely many points of the curve $O_{k}(A)$
		with $\ell=0$. In particular, the vertices of the face $F$ belong to this
		finite intersection. Since $\dim(F)=2$, it has three vertices
		$v_{1}, v_{2}, v_{3}$ (possibly among others), which necessarily belong to $O
		_{k}(A)$. As the curve $O_{k}(A)$ has no antipodal points, the images,
		$[v_{1}], [v_{2}], [v_{3}]$, of these points in $\PP^{2}(\R)$ are all distinct.

		We claim that the restriction of $f_{A}^{\rm red}$ to the line $\ell=0$ has roots
		of multiplicity $\geq 2$ at each of these points. Suppose to the contrary
		that $[v_{i}]$ is a simple root of the restriction of $f_{A}^{\rm red}$ to
		$\ell=0$. It follows that $[v_{i}]$ is not a singular point of $\mathcal{V}(f
		_{A}^{\rm red})$ and that $\mathcal{V}(f_{A}^{\rm red})$ and $\mathcal{V}(\ell
		)$ meet transversely at $[v_{i}]$. From this, we see that $v_{i}$ is a smooth
		point of the curve $O_{k}(A)$ and that $O_{k}(A)$ and the plane $\mathcal{V}(
		\ell)$ meet transversely at $v_{i}$. This implies that the sign of
		$\ell(\lambda_{k}(\theta), -\cos(\theta), -\sin(\theta))$ changes sign at
		$v_{i}$, contradicting the fact that $\ell$ is nonnegative on $O_{k}(A)$.

		Therefore the restriction of $f_{A}^{\rm red}$ to the line $\ell=0$ has $\geq
		3$ roots of multiplicity $\geq 2$ and $\ell$ is tritangent to $\V(f_{A})$.
	\end{proof}

	Up to rescaling by arbitrary real scalars, antipodal points $\pm(\lambda_{k}(\theta
	),-\cos(\theta),-\sin(\theta))$ are arbitrary points $\pm(-\lambda_{k}(p_{1},p_{2}
	),p_{1},p_{2})$ in the variety of $f_{A}$ with $\lambda_{k}(p_{1}, p_{2}) = \lambda
	_{n-k+1}(p_{1}, p_{2})$.

	\begin{lemma}
		\label{lm:restricting singularity} Suppose that $O_{k}(A)$ contains a pair of
		antipodal points. That is, suppose $\lambda_{k}(p_{1},p_{2}) = \lambda_{n-k+1}
		(p_{1},p_{2})$ for some $(p_{1},p_{2})\in \R^{2}\backslash \{(0,0)\}$. Then
		\begin{itemize}
			\item[(i)] $\Lambda_{k}(A)\subseteq \{a+\ii b : \lambda_{k}(p_{1},p_{2}) =
				ap_{1} + bp_{2}\}$, and

			\item[(ii)] if $k < (n+1)/2$, then
				$p = [-\lambda_{k}(p_{1},p_{2}) :p_{1}:p_{2}]$ satisfies $f_{A}(p)=0$
				and $\nabla f_{A}(p)=0$.
		\end{itemize}
	\end{lemma}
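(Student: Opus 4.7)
For part (i), the plan is to apply the membership test of \Cref{thm:MembershipTest} at the specific direction $(p_1, p_2)$. If $a + \ii b \in \Lambda_k(A)$, conditions (d) and (e) of \Cref{thm:MembershipTest} together force
\[
\lambda_{n-k+1}(x,y) \leq ax + by \leq \lambda_k(x,y) \quad \text{for all } (x,y) \in \R^2.
\]
Specializing to $(x,y) = (p_1, p_2)$ and invoking the hypothesis $\lambda_k(p_1,p_2) = \lambda_{n-k+1}(p_1,p_2)$ collapses the two inequalities into the single equality $ap_1 + bp_2 = \lambda_k(p_1,p_2)$, which is exactly the claim.

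For part (ii), the plan is to translate the coincidence of ordered eigenvalues into an eigenvalue multiplicity statement and then apply \Cref{lm:singDegree}. Write $M = p_1 \Re(A) + p_2 \Im(A)$ and $\mu = \lambda_k(p_1,p_2)$. The monotonicity $\lambda_k(M) \geq \lambda_{k+1}(M) \geq \cdots \geq \lambda_{n-k+1}(M)$, together with the assumed equality of the two outer terms, forces all of $\lambda_k(M), \lambda_{k+1}(M), \ldots, \lambda_{n-k+1}(M)$ to equal $\mu$. That is $n - 2k + 2$ coincident eigenvalues, and the assumption $k < (n+1)/2$ is exactly what makes this count at least $2$. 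Hence $\mu$ is an eigenvalue of $M$ of multiplicity $\geq 2$, equivalently the matrix $-\mu I + M$ has corank $\geq 2$.

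Now I invoke \Cref{lm:singDegree} applied at the point $p = (-\mu, p_1, p_2)$, for which $p_0 I + p_1 \Re(A) + p_2 \Im(A) = -\mu I + M$. The equivalence (i)$=$(ii) in that lemma identifies the corank of this matrix with the multiplicity of $f_A$ at $p$, so $f_A$ has multiplicity $\geq 2$ at $p$. By the remark immediately following \Cref{lm:singDegree}, this is equivalent to $f_A(p) = 0$ and $\nabla f_A(p) = 0$, as claimed.

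The main content of the argument is really just bookkeeping; no real obstacle is anticipated, except to be careful about the index arithmetic $n - 2k + 2 \geq 2 \Leftrightarrow k \leq n/2 \Leftrightarrow k < (n+1)/2$ (since $k$ is an integer), which is exactly the hypothesis in (ii) and is what prevents the singular-point conclusion from being vacuous at the threshold $k = (n+1)/2$, where the equality $\lambda_k = \lambda_{n-k+1}$ is automatic.
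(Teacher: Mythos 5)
Your proof is correct and follows essentially the same approach as the paper's. Part (i) is the paper's argument phrased via conditions (d) and (e) of \Cref{thm:MembershipTest} rather than directly via nonnegativity of $t+ax+by$ on $O_k(A)$ at the antipodal pair $\pm p$, and part (ii) is the identical argument: squeeze the eigenvalues to get multiplicity $\geq n-2k+2\geq 2$, translate to corank $\geq 2$ of the pencil at $p$, and apply \Cref{lm:singDegree} together with the remark following it.
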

	\begin{proof}
		(i) After rescaling, we can take
		$p = (\lambda_{k}(\theta), -\cos(\theta), -\sin(\theta))$. Suppose that $a+\ii
		b\in \Lambda_{k}(A)$. Then the function $\ell(t,x,y) = t+ax+by$ is
		nonnegative on $O_{k}(A)$. In particular, $\ell(p) \geq 0$ and
		$\ell(-p) = -\ell(p)\geq 0$. It follows that
		\[
			0=\ell(p) = \lambda_{k}(\theta) -a\cos(\theta) -b\sin(\theta).
		\]

		(ii) When $k<(n+1)/2$, $k<n-k+1$. By definition for any $k\leq j \leq n-k+1$,
		$\lambda_{n-k+1}(\theta) \leq \lambda_{j}(\theta) \leq \lambda_{k}(\theta)$.
		Since $\lambda_{n-k+1}(\theta) =\lambda_{k}(\theta)$ we see that these are
		equalities. It follows that the matrix $I_{n}\lambda_{k}(\theta) - \cos(\theta
		) \Re A - \sin(\theta) \Im A$ has corank $\geq n-2k+2 \geq 2$. By
		\Cref{lm:singDegree}, $f_{A}$ has multiplicity $\geq 2$ at $[\lambda_{k}(\theta
		) :-\cos(\theta):-\sin(\theta)]$.
	\end{proof}

	\begin{corollary}
		\label{cor:SingRestriction} Let $S =\{[p_{0}:p_{1}:p_{2}]\in \mathcal{V}(f_{A}
		) : -p_{0}=\lambda_{k}(p_{1}, p_{2}) = \lambda_{n-k+1}(p_{1}, p_{2})\}$ and consider
		the projective linear space $V\subset \PP^{2}(\R)$ spanned by $S$. Then
		\[
			\dim(\Lambda_{k}(A))\leq 1-\dim(V).
		\]
		In particular, if $\dim(V)=2$ then $\Lambda_{k}(A)$ is empty and if $\dim(V)=
		1$ then $\Lambda_{k}(A)$ is either empty or the single point $\{a+\ii b : [1:
		a:b]\in V^{\perp}\}$.
	\end{corollary}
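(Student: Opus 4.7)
The plan is to apply Lemma~\ref{lm:restricting singularity}(i) at each point of $S$ to extract a family of affine linear constraints on $\Lambda_k(A)$, and then bound its dimension by elementary linear algebra.

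First, I fix an arbitrary $[p_0:p_1:p_2] \in S$. By definition of $S$, $-p_0 = \lambda_k(p_1,p_2) = \lambda_{n-k+1}(p_1,p_2)$, so Lemma~\ref{lm:restricting singularity}(i) forces every $a + \ii b \in \Lambda_k(A)$ to satisfy
\[
p_0 + p_1 a + p_2 b = 0.
\]
Equivalently, the vector $(1, a, b) \in \R^3$ is orthogonal to the representative $(p_0, p_1, p_2)$.

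Next, set $d = \dim(V)$ and pick $d+1$ projectively independent representatives $p^{(0)}, \ldots, p^{(d)} \in \R^3$ of points in $S$, which exist because $V$ is the projective span of $S$. Form the $(d+1) \times 3$ matrix $P$ with rows $p^{(i)}$; by construction $\operatorname{rank}(P) = d+1$. Any $a + \ii b \in \Lambda_k(A)$ produces a vector $(1,a,b)^{\top} \in \ker P$, an $\R$-linear subspace of dimension $2 - d$. Intersecting this kernel with the affine hyperplane $\{v_0 = 1\} \subset \R^3$ yields either the empty set or an affine subspace of $\R^2$ of dimension $1 - d$, giving the bound $\dim(\Lambda_k(A)) \leq 1 - \dim(V)$.

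The two special cases fall out immediately. When $\dim(V) = 2$, $P$ is an invertible $3 \times 3$ matrix, so $\ker P = \{0\}$ and no vector with first coordinate equal to $1$ satisfies the constraints; hence $\Lambda_k(A) = \emptyset$. When $\dim(V) = 1$, $\ker P$ is spanned by a unique (up to scaling) vector $(c, a, b) \in \R^3$, which under the standard inner-product identification is precisely the point $V^{\perp} \in \PP^2(\R)$. If $c \neq 0$, normalizing yields the unique candidate $a + \ii b$ with $[1:a:b] \in V^{\perp}$; if $c = 0$, then no lift with first coordinate $1$ satisfies the constraints and $\Lambda_k(A)$ is empty. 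There is essentially no obstacle: the substantive content is already encoded in Lemma~\ref{lm:restricting singularity}(i), and the remainder is routine linear algebra together with the observation that projectively independent points of $S$ produce linearly independent affine constraints on $\Lambda_k(A)$.
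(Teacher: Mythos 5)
Your proof is correct and takes essentially the same route the paper intends: the corollary is stated without proof, and it is clearly meant to follow from Lemma~\ref{lm:restricting singularity}(i) by exactly the linear-algebra argument you give, namely that each $[p_0:p_1:p_2]\in S$ imposes the linear constraint $p_0+ap_1+bp_2=0$ on $\Lambda_k(A)$, and $\dim(V)+1$ independent such constraints cut the affine slice $\{v_0=1\}$ of $\R^3$ down to dimension $1-\dim(V)$. The only point worth being slightly careful about, which you handle implicitly, is that the conditions defining $S$ are well-defined on $\PP^2(\R)$ (they are invariant under scaling and under the antipodal map, as the paper notes in the remark following the corollary), so it is legitimate to pick representatives and form the matrix $P$ as you do.
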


	Note that $[1:a:b]\in V^{\perp}$ if and only if $t+ax+by=0$ for all $[t:x:y]\in
	V$.

	\begin{remark}
		The condition $\lambda_{k}(p_{1}, p_{2}) = \lambda_{n-k+1}(p_{1},p_{2})$ is well-defined
		on points in $\PP^{2}(\R)$. It is clearly invariant under nonnegative
		scaling. Moreover, by definition
		$\lambda_{k}(-p_{1}, -p_{2})= -\lambda_{n-k+1}(p_{1},p_{2})$. Similarly,
		$\lambda_{n-k+1}(-p_{1},-p_{2})=-\lambda_{k}(p_{1}, p_{2})$.
	\end{remark}

	When $\Lambda_{k}(A)$ is one-dimensional, we see from \Cref{prop:LowerDimFace}
	and \Cref{lm:restricting singularity} that the curve $\V(f_{A})$ has a point $p$
	of multiplicity $\geq 2$. In this case, we would like to determine the end points
	of the line segment $\Lambda_{k}(A)$. These will correspond to lines passing
	through $p$ with additional tangency to $\mathcal{V}(f_{A})$.

	\begin{definition}
		\label{def:y-tangent} Let $p\in \mathcal{V}(f_{A})$. Let $f^{\rm red}_{A}$
		denote the square-free part of the factorization of $f_{A}$ into irreducible
		polynomials. Then $p$ is a point of $\mathcal{V}(f^{\rm red}_{A})$ of some multiplicity
		$d\geq 1$. Recall from \Cref{lm:singDegree} that for any $q\in \C^{3}$ the
		restriction $f^{\rm red}_{A}(rq+sp)$ has a factor of $r^{d}$. We say that the
		line $\{ [rq + sp]: [r:s] \in \mathbb{P}^{1}(\mathbb{C}) \}$ is $p$-\emph{tangent}
		to $\mathcal{V}(f_{A})$ if one of the following holds:
		\begin{itemize}
			\item[(I)] $f^{\rm red}_{A}(rq + sp) = r^{d}\cdot \ell^{2}\cdot h$ for some
				$\ell\in\C[r,s]_{1}$ and $h\in \C[r,s]$, or

			\item[(II)] $f^{\rm red}_{A}(rq + sp) = r^{d+1}\cdot h$ for some $h\in \C[r
				,s]$.
		\end{itemize}
	\end{definition}

	Observe that part (I) in this definition allows for $\ell=r$, which satisfies
	(II). Similarly, both conditions are satisfied when the restriction
	$f^{\rm red}_{A}(rq + sp)$ is identically zero, in which case the linear form defining
	the line is a factor of $f^{\rm red}_{A}$ and hence $f_{A}$.

	\begin{theorem}
		\label{thm:ptangents} If $k<(n+1)/2$ and $p=[p_{0}:p_{1}:p_{2}]\in\mathcal{V}
		(f_{A})$ with $-p_{0}=\lambda_{k}(p_{1},p_{2}) = \lambda_{n-k+1}(p_{1},p_{2})$
		then $\Lambda_{k}(A)$ is the convex hull of its elements $a+\ii b$ for which
		$t+ax+by$ is $p$-tangent. That is,
		\[
			\Lambda_{k}(A) = \conv(\{a+ \ii b \in \Lambda_{k}(A): t + ax+ by = 0 \text{
			is }p\text{-tangent to }\mathcal{V}(f_{A})\}).
		\]
	\end{theorem}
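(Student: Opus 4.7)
The plan is to reduce to proving that each extreme point of $\Lambda_{k}(A)$ corresponds to a $p$-tangent linear form. By \Cref{lm:restricting singularity}(i), $\Lambda_{k}(A)$ is contained in the line $L=\{(a,b)\in\R^{2}:p_{0}+ap_{1}+bp_{2}=0\}$, so it is empty, a single point, or a closed line segment, and in particular equals the convex hull of its at most two extreme points. If $\Lambda_{k}(A)$ is empty there is nothing to prove; otherwise fix an extreme point $(a^{*},b^{*})\in\Lambda_{k}(A)$, set $\ell^{*}=t+a^{*}x+b^{*}y$, and note that $\ell^{*}(p)=0$ since $(a^{*},b^{*})\in L$. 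The goal is to show that $\ell^{*}$ is $p$-tangent to $\V(f_{A})$.

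The first step is to locate a ``second zero'' $v^{*}\in O_{k}(A)$ of $\ell^{*}$ with $[v^{*}]\neq[p]$ in $\PP^{2}$. Because $\Lambda_{k}(A)\subseteq L$, the cone $\widehat{\Lambda_k(A)}$ lies in a proper linear subspace of $\R^{3}$, so its dual cone has nontrivial lineality, and a direct check shows that $p$ belongs to this lineality (indeed $p\cdot(1,a,b)=p_{0}+ap_{1}+bp_{2}=0$ for every $(a,b)\in L$). Assuming that $O_{k}(A)$ is not contained in a plane through the origin (the degenerate case being handled analogously using \Cref{th:dual_cone}), \Cref{cor:Duals} gives $(\widehat{\Lambda_k(A)})^{*}=\overline{\cone(O_{k}(A))}$. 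If $O_{k}(A)\cap\V(\ell^{*})$ were contained in $\R p$, then $\overline{\cone(O_{k}(A))}\cap\V(\ell^{*})=\overline{\cone(O_{k}(A)\cap\V(\ell^{*}))}$ would also be contained in $\R p$, so the face of the dual cone exposed by $(1,a^{*},b^{*})$ would coincide with the lineality; by face-lattice duality this would place $(1,a^{*},b^{*})$ in the relative interior of $\widehat{\Lambda_k(A)}$, contradicting extremality. Thus some $v^{*}\in O_{k}(A)\cap\V(\ell^{*})$ with $[v^{*}]\neq[p]$ must exist.

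Given such a $v^{*}$, I would argue that $\ell^{*}$ is tangent to $\V(f_{A}^{\rm red})$ at $v^{*}$. Together with $\ell^{*}(p)=0$, this yields a factorization $f_{A}^{\rm red}(rv^{*}+sp)=r^{d}\cdot\ell^{2}\cdot h$, that is, $p$-tangency of type (I). If $v^{*}$ is singular on $\V(f_{A}^{\rm red})$, this follows directly from \Cref{lm:singDegree} applied at $v^{*}$, combined with the $r^{d}$ factor forced by the multiplicity $d\geq 2$ of $p$. If $v^{*}$ is a smooth point, then by \Cref{lem:lambda_diff} the curve $O_{k}(A)$ has a well-defined tangent line $T$ at $v^{*}$; since $\ell^{*}|_{O_{k}(A)}$ attains its minimum value $0$ at $v^{*}$, one has $T\subseteq\V(\ell^{*})$. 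A short computation with the parameterization $\theta\mapsto(\lambda_{k}(\theta),-\cos\theta,-\sin\theta)$ shows that $v^{*}$ and $T$ are linearly independent in $\R^{3}$ (the $2\times 2$ determinant of the last two coordinates equals $1$), so $\Span(v^{*},T)$ is a $2$-dimensional subspace contained in both $\V(\ell^{*})$ and $T_{v^{*}}\V(f_{A}^{\rm red})$, and hence equal to both. This forces $(1,a^{*},b^{*})$ to be proportional to $\nabla f_{A}^{\rm red}(v^{*})$, completing the tangency argument. The main obstacle is the face-duality step producing $v^{*}$: the extremality of $(a^{*},b^{*})$ must be translated into the geometric statement that $O_{k}(A)\cap\V(\ell^{*})$ strictly exceeds $\R p$. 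Once $v^{*}$ is in hand, the identification of $T_{v^{*}}\V(f_{A}^{\rm red})$ with $\Span(v^{*},T)$ via Euler's formula and the smoothness criterion makes the remainder routine.
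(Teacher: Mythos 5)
The student's plan—reduce to extreme points of $\Lambda_{k}(A)$ and show each corresponds to a $p$-tangent line—is reasonable and is also how the paper's proof is structured after normalizing $p=[0:0:1]$ and $\Lambda_{k}(A)\in\{[-1,0],\{0\}\}$. But the route through face duality has a genuine gap, which is exactly the phenomenon the paper's \Cref{lm:limittangent} and \Cref{lem:semialgFunction} exist to handle.

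The key unjustified step is the asserted identity
\[
	\overline{\cone(O_{k}(A))}\cap\V(\ell^{*}) \;=\; \overline{\cone\bigl(O_{k}(A)\cap\V(\ell^{*})\bigr)}.
\]
Only the inclusion $\supseteq$ holds in general. A supporting plane $\V(\ell^{*})$ of the cone over $O_{k}(A)$ can expose a face that is strictly larger than the cone of the actual intersection points, because the face can contain \emph{limit directions} that are approached by $O_{k}(A)$ without ever being hit. This is exactly what happens at the endpoint $a=\tfrac{1}{25}(4-\sqrt{41})$ in \Cref{ex:quarticPtangent}, where the line $t+ax=0$ is described as ``a limit of supporting hyperplanes of edges'': it is a $p$-tangent of type (II), the only point of $O_{2}(A)$ on the plane is $\pm p$ itself, and yet it is an extreme point of $\Lambda_{2}(A)$. \Cref{ex:weirdTritangent} is an even more extreme illustration, where the singleton $\Lambda_{2}(A)$ comes from a line meeting $\V(f_{A})$ only at $p$ (with full multiplicity four). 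In both cases the desired ``second zero'' $v^{*}\in O_{k}(A)\cap\V(\ell^{*})$ with $[v^{*}]\neq[p]$ simply does not exist, so your argument cannot produce it. Relatedly, when $\Lambda_{k}(A)$ is a single point, the cone $\widehat{\Lambda_{k}(A)}$ is a ray, the face of $(\widehat{\Lambda_{k}(A)})^{*}$ exposed by $(1,a^{*},b^{*})$ really is the lineality space, and $(1,a^{*},b^{*})$ really is in the relative interior of $\widehat{\Lambda_{k}(A)}$ — so the ``contradiction with extremality'' is vacuous and yields nothing. That case must still be treated, since the theorem asserts the singleton is a $p$-tangent.

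The paper avoids these issues by replacing the geometric face argument with an analytic one: from $0\in\Lambda_{k}(A)$ and $\varepsilon\notin\Lambda_{k}(A)$ one gets $\inf_{y}\lambda_{k}(1,y)=0$, then \Cref{lem:semialgFunction} cleanly dichotomizes into the attained case (which is essentially your ``tangency at $v^{*}$'' situation, handled by \Cref{lem:lambda_diff} and \Cref{lm:singDegree}) and the non-attained limit case, which \Cref{lm:limittangent} converts into divisibility of the lowest-degree part $h_{d}$, giving $p$-tangency of type (II). That second branch is precisely what your proposal is missing. The remainder of your argument — the Euler-formula computation showing $\Span(v^{*},T)=T_{v^{*}}\V(f_{A}^{\rm red})=\V(\ell^{*})$ for smooth $v^{*}$, and the use of \Cref{lm:singDegree} for singular $v^{*}$ — is correct and matches the ``attained'' branch of the paper's proof, but it covers only part of the theorem. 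To repair your approach you would need to characterize the extra directions in the exposed face of $\overline{\cone(O_{k}(A))}$ beyond $\cone(O_{k}(A)\cap\V(\ell^{*}))$ and show they correspond to type (II) $p$-tangents, which in effect reconstructs \Cref{lm:limittangent}.
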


	We will often perform a change of coordinates to take $p$ to the point
	$[0:0:1]$. The following will be a useful way of discovering $p$-tangent lines.

	\begin{lemma}
		\label{lm:limittangent} Suppose that $[0:0:1]$ is a point of $\mathcal{V}(f_{A}
		)$ of multiplicity $d\geq 1$. Then $t + cx$ is $[0:0:1]$-tangent to
		$\mathcal{V}(f_{A})$ of type (II) if and only if $t+cx$ divides the lowest
		degree part of $f^{\rm red}_{A}(t,x,1)$. Moreover, this condition is
		satisfied when $\lim_{x \to 0^+ }\lambda_{k}(x, 1)/x = c$ or
		$\lim_{x \to 0^-}\lambda_{k}(x, 1)/x = c$ for some $k$.
	\end{lemma}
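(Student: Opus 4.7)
The plan is to dehomogenize $f_A^{\mathrm{red}}$ at $y = 1$ and work with the local expansion around the origin, which corresponds to the point $p = [0:0:1]$. Set $\tilde f(t,x) := f_A^{\mathrm{red}}(t,x,1)$. By \Cref{lm:singDegree}(iii), the multiplicity $d$ of $p$ on $\V(f_A^{\mathrm{red}})$ equals the minimum degree of a monomial of $\tilde f$, so I can write $\tilde f = \sum_{e=d}^{n'} \tilde f_e$, where $n' = \deg f_A^{\mathrm{red}}$, each $\tilde f_e$ is homogeneous of degree $e$ in $(t,x)$, and $\tilde f_d \not\equiv 0$.

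For the equivalence, I would parameterize the line $\V(t+cx)$ through $p$ by $q = (-c,1,0)$ and $p = (0,0,1)$, so that $rq + sp = (-cr,\, r,\, s)$. A direct expansion using the definition of $\tilde f_e$ yields
\[
f_A^{\mathrm{red}}(-cr,\, r,\, s) \;=\; \sum_{e=d}^{n'} \tilde f_e(-c,1)\, r^e s^{n'-e}.
\]
Consequently $r^{d+1}$ divides $f_A^{\mathrm{red}}(-cr,r,s)$ if and only if $\tilde f_d(-c, 1) = 0$. Since $\tilde f_d$ is a homogeneous binary form, this vanishing is equivalent to $(t+cx) \mid \tilde f_d(t,x)$, that is, $(t+cx)$ divides the lowest-degree part of $\tilde f$. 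This is exactly the condition for type (II) tangency at $p$.

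For the moreover clause, assume $\lim_{x\to 0^+} \lambda_k(x,1)/x = c$. Finiteness of the limit together with continuity of Hermitian eigenvalues forces $\lambda_k(0,1) = 0$, confirming $p \in \V(f_A)$. For every $x \in \R$, $(-\lambda_k(x,1),\, x,\, 1) \in \V(f_A) = \V(f_A^{\mathrm{red}})$, so $\tilde f(-\lambda_k(x,1), x) = 0$. For $x > 0$, homogeneity of each $\tilde f_e$ gives
\[
0 \;=\; \tilde f(-\lambda_k(x,1),\, x) \;=\; \sum_{e=d}^{n'} x^e\, \tilde f_e\!\left(-\lambda_k(x,1)/x,\, 1\right).
\]
Dividing by $x^d$ and letting $x \to 0^+$, each $e > d$ term vanishes and $-\lambda_k(x,1)/x \to -c$; continuity of $\tilde f_d$ then yields $\tilde f_d(-c,1) = 0$, as required. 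The case $x \to 0^-$ is symmetric (sign changes cancel homogeneously in each $\tilde f_e$). The main obstacle is the bookkeeping in the expansion of $f_A^{\mathrm{red}}(-cr, r, s)$ so that its $r$-coefficients are correctly identified with the values $\tilde f_e(-c,1)$; once that identification is in place, both the equivalence and the limit argument follow cleanly.
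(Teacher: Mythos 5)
Your proof is correct and follows essentially the same route as the paper: the decomposition $\tilde f = \sum_{e=d}^{n'}\tilde f_e$ is exactly the paper's $f(t,x,1) = h_d + \cdots + h_m$, the parametrization of the line by $q=(-c,1,0)$, $p=(0,0,1)$ is identical, and the limit argument for the moreover clause (divide by $x^d$, invoke homogeneity and continuity, let $x\to 0^{\pm}$) is the same computation.
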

	\begin{proof}
		Recall the notation $\lambda_{k}(x,1):= \lambda_{k}(x\Re A + \Im A )$. For
		ease of notation let $f$ denote $f_{A}^{\rm red}$. By assumption $[0:0:1]$ is
		a point of $\mathcal{V}(f)$ of some multiplicity $d\geq 1$. Consider the
		expansion $f(t,x,y) =y^{m-d}h_{d}+y^{m-d-1}h_{d+1}+\hdots + h_{m}$, where $h_{j}
		\in \R[t,x]$ is homogeneous of degree $j$ and $m = \deg(f)$.

		To see the first equivalence, note that the line $t+cx=0$ is spanned by
		$q =(-c,1,0)$ and $p = (0,0,1)$. We expand $f(rq+sp) = \sum_{j=d}^{m}s^{m-j}r
		^{j}h_{j}(-c,1)$ to see that $h_{d}(-c,1)=0$ if and only if $r^{d+1}$ divides
		$f(rq+sp)$. Since $h_{d}(t,x)$ is homogeneous and bivariate, $t+cx$ divides $h
		_{d}(t,x)$ if and only if $h_{d}(-c,1)=0.$

		For the further claim, we find the limit
		\[
			\lim_{x \to 0^{\pm}}\frac{f(-\lambda_{k}(x,1),x,1)}{x^{d}}= 0
		\]
		by noting that its numerator is identically zero. We can then rewrite this
		limit as
		\begin{eqnarray*}
			0& = &\lim_{x \to 0^{\pm}} \frac{h_{d}(-\lambda_{k}(x,1),x)}{x^{d}} + \frac{h_{d+1}(-\lambda_{k}(x,1),x)}{x^{d}}
			+ \cdots + \frac{h_{m}(-\lambda_{k}(x,1),x)}{x^{d}}\\
			& = & \lim_{x \to 0^{\pm}} h_d\left(-\frac{\lambda_{k}(x,1)}{x},1\right) +
			xh_{d+1}\left(-\frac{\lambda_{k}(x,1)}{x},1\right) + \cdots + x^{m-d}h_{m}\left(-\frac{\lambda_{k}(x,1)}{x},1\right)\\
			& = & h_d(-c,1).
		\end{eqnarray*}
		In the last limit, note that the $\lim_{x \to 0^{\pm}}h_{j}(-\frac{\lambda_{k}(x,1)}{x}
		,1) = h_{j}(-c,1)$ for all $j$. This shows that $t + cx$ must be a linear
		factor of $h_{d}(t,x)$.
	\end{proof}

	\begin{lemma}
		\label{lem:semialgFunction} Let $\varphi:\R\to\R$ be a continuous semialgebraic
		function. If $\inf\{\varphi(y): y\in \R\}$ equals zero, then either $\varphi(
		y^{*})=0$ for some $y^{*}\in \R$, $\lim_{y \to \infty}\varphi(y)= 0$, or $\lim
		_{y \to -\infty}\varphi(y) = 0$.
	\end{lemma}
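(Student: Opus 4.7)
The plan is to argue by cases on where the infimum is approached, using the key fact that continuous semialgebraic functions on $\R$ admit limits at $\pm\infty$ in the extended real line. First I would handle the attained case: if $\varphi(y^*)=0$ for some $y^*\in\R$, we are done. So assume $\varphi(y)>0$ for all $y$. Since the infimum is $0$, we can choose a sequence $(y_n)$ in $\R$ with $\varphi(y_n)\to 0$.

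Next I would split on whether $(y_n)$ is bounded or not. If $(y_n)$ is bounded, by Bolzano–Weierstrass a subsequence converges to some $y^*\in\R$, and by continuity $\varphi(y^*)=\lim \varphi(y_n)=0$, contradicting our assumption. So $(y_n)$ must be unbounded, and after passing to a subsequence we may assume $y_n\to +\infty$ or $y_n\to -\infty$; say without loss of generality $y_n\to +\infty$ (the other case is symmetric, applied to $\tilde\varphi(y):=\varphi(-y)$).

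The crux is to promote this subsequential statement to the full limit $\lim_{y\to\infty}\varphi(y)=0$. Here I would invoke the standard fact from real semialgebraic geometry that a continuous semialgebraic function $\varphi:\R\to\R$ has a Puiseux expansion at infinity: there exist a rational exponent $\alpha\in\mathbb{Q}$ and a constant $c\in\R$, together with a continuous semialgebraic remainder of strictly smaller growth, so that $\varphi(y)=cy^{\alpha}+o(y^{\alpha})$ as $y\to\infty$. In particular $\lim_{y\to\infty}\varphi(y)$ exists in $[-\infty,\infty]$. (Equivalently, one can cite the Monotonicity Theorem for semialgebraic functions, which says $\R$ decomposes into finitely many intervals on each of which $\varphi$ is monotone or constant; on the unbounded rightmost interval, $\varphi$ is eventually monotone, hence has a limit at $+\infty$.)

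Given the existence of the limit, our subsequence $\varphi(y_n)\to 0$ forces $\lim_{y\to\infty}\varphi(y)=0$, completing the proof. The only substantive step is the existence of the limit at infinity; everything else is a routine dichotomy between the bounded and unbounded minimizing regimes. I expect this limit-existence input to be the main obstacle to write cleanly, but since it is a textbook consequence of the cell decomposition (or the Monotonicity Theorem) for semialgebraic functions on $\R$, it can be cited rather than reproved.
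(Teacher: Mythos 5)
Your proof is correct and rests on the same key input as the paper's: the Monotonicity Theorem (equivalently, the finiteness of critical points) for one-variable semialgebraic functions, which guarantees that $\varphi$ is eventually monotone and hence has limits at $\pm\infty$ in the extended reals. The paper packages this slightly differently — it encloses the finitely many critical points in a compact interval $[a,b]$, uses that the minimum over $[a,b]$ is attained, and if that minimum is positive concludes the infimum is approached on one of the two unbounded monotone tails — whereas you run a sequence argument with Bolzano–Weierstrass to split the bounded and unbounded cases. These are cosmetically different routes to the same dichotomy, with the same citation doing the real work, so the proposal is essentially the paper's proof.
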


	\begin{proof}
		By definition, there is some polynomial $F(y,z)\in \R[y,z]$ for which
		$F(y,\varphi(y))=0$ for all $y\in \R$. Then $\varphi$ is differentiable at all
		but finitely many points and has finitely many critical points, at which
		$\varphi'$ is zero or undefined. See, for example, \cite[Thm 5.56]{Basu2006}.
		Let $[a,b]$ be a closed interval containing those critical points. The minimum
		of $\varphi(y)$ over $y\in [a,b]$ is attained. If this minimum is zero, then
		there is a point $y^{*}\in [a,b]$ with $\varphi(y^{*})=0$. If not, then the infimum
		of $\varphi$ over either $(-\infty, a)$ or $(b, \infty)$ must be zero. On
		$(-\infty, a)$ and $(b, \infty)$ the function $\varphi$ is monotone. It follows
		that either $\lim_{y \to \infty}\varphi(y)$ or $\lim_{y \to -\infty}\varphi(y
		)$ equals zero.
	\end{proof}

	\begin{proof}[Proof of \Cref{thm:ptangents}]
		After a change of coordinates, we may assume that $p=[0:0:1]$. From \Cref{lm:restricting
		singularity}, it follows that $\Lambda_{k}(A)\subset \R$. If $\Lambda_{k}(A)$
		is empty then there is nothing to prove. Therefore, we assume that $\Lambda_{k}
		(A)$ is non-empty. By applying another affine linear transformation we can
		assume that $\Lambda_{k}(A) = [-1,0]$ or $\Lambda_{k}(A) =\{0\}$. In either case
		$0\in \Lambda_{k}(A)$ and $\varepsilon\not\in \Lambda_{k}(A)$ for all $\varepsilon
		\in \R_{>0}$. It suffices to show that $t=0$ is a $p$-tangent line of
		$\mathcal{V}(f_{A})$. By definition, if $t$ is a factor of $f_{A}$, then,
		since it passes through $p$, it is $p$-tangent. Therefore we can assume that
		$t$ is not a factor of $f_{A}$.

		Recall the notation $\lambda_{j}(x,y) = \lambda_{j}( x\Re(A) + y\Im(A) )$. Since
		$0\in\Lambda_{k}(A)$, by \autoref{thm:MembershipTest}(a)(f), we have that for
		all $y\in \R$,
		\begin{equation}
			\label{eq:yhatbounds}\lambda_{n-k+1}( 1, y) \leq 0 \leq \lambda_{k}( 1,y ).
		\end{equation}
		Similarly, since $\varepsilon \not\in \Lambda_{k}(A)$, there exists
		$y_{\varepsilon}\in \R$ for which
		$\lambda_{k}( 1,y_{\varepsilon})<\varepsilon$ or
		$\lambda_{n-k+1}( 1, y_{\varepsilon}) > \varepsilon$. Since $\lambda_{n-k+1}(
		1,y)\leq 0$ for all $y\in \R$, it must be that
		$\lambda_{k}( 1,y_{\varepsilon})<\varepsilon$. It follows that
		\begin{equation}
			\label{eq:inf}\inf_{y \in \mathbb{R}}\lambda_{k}(1,y) = 0.
		\end{equation}

		Because the function $y\mapsto \lambda_{k}(1,y)$ is continuous and
		semialgebraic, it follows from \Cref{lem:semialgFunction} that either $\lambda
		_{k}(1, y^{*}) = 0$ for some $y^{*}\in \R$,
		$\lim_{y \to \infty}\lambda_{k}(1,y) = 0$, or $\lim_{y \to -\infty}\lambda_{k}
		(1,y) = 0$.

		For $x>0$, $\lambda_{k}(1,1/x) = \lambda_{k}(x,1)/x$. Then $\lim_{y \to
		\infty}\lambda_{k}(1,y) = \lim_{x\to 0^+}\lambda_{k}(x,1)/x$. Similarly, for
		$x<0$, $\lambda_{k}(1,1/x) = \lambda_{n-k+1}(x,1)/x$. So $\lim_{y \to -\infty}
		\lambda_{k}(1,y) = \lim_{x\to 0^-}\lambda_{n-k+1}(x,1)/x$. If either of
		these limits equal zero, \Cref{lm:limittangent} gives that $t=0$ is a $p$-tangent
		line of $\mathcal{V}(f_{A})$.

		Finally, suppose $\lambda_{k}(1, y^{*}) = 0$ for some $y^{*}\in \R$. If the point
		$(0, 1, y^{*})$ is a singularity of $\mathcal{V}(f^{\rm red}_{A})$, then the
		restriction $f^{\rm red}_{A}(0,1,y)$ has a zero of multiplicity $\geq 2$ at
		$y^{*}$, by \Cref{lm:singDegree}.

		In the case that this point is non-singular we can consider the function $\lambda
		_{k}(x,y)$. This function is locally minimized at $(1, y^{*})$. Therefore,
		it must be the case that its gradient is zero. Hence $\nabla f^{\rm red}(0,1,
		y^{*})$ is proportional to $(1,0,0)$. See \Cref{lem:lambda_diff}. From here we
		can conclude that the line $t = 0$ is tangent to $\mathcal{V}(f^{\rm red}_{A}
		)$ at $(0, 1, y^{*})$. In particular, the restriction $f^{\rm red}_{A}(0,1,y)$
		is either identically zero or has a root of multiplicity $\geq 2$. In either
		case, $t=0$ is a $p$-tangent of $\mathcal{V}(f_{A})$.
	\end{proof}

	\begin{example}
		\label{ex:quarticPtangent} Consider the Kippenhahn polynomial
		\[
			f_{A} = t^{4} - \frac{13}{36}t^{2} x^{2} + \frac{1}{36}x^{4} - \frac{1}{4}t
			^{2} y^{2} - \frac{2}{25}t x y^{2} + \frac{1}{100}x^{2} y^{2}.
		\]
		The curve $\mathcal{V}(f_{A})$ is singular at the point $p = [0:0:1]$. The
		lowest degree part of $f_{A}(t,x,1)$ is
		$h_{2}(t,x) = \frac{1}{4}t^{2} - \frac{2}{25}t x + \frac{1}{100}x^{2} = (t +
		\frac{1}{25}(4 + \sqrt{41})x)(t + \frac{1}{25}(4 - \sqrt{41})x)$. Both factors
		give $p$-tangent lines of $f_{A}$. One can check that
		$a = \frac{1}{25}(4 - \sqrt{41})$ belongs to $\Lambda_{2}(A)$ and will form
		one of its end points. The line $t + \frac{1}{3}x=0$ passes through $[0:0:1]$
		and is additionally tangent to $\mathcal{V}(f_{A})$ at the point $[1:-3:0]$,
		showing that it is also $p$-tangent to $\mathcal{V}(f_{A})$. In this case,
		$\Lambda_{2}(A)$ is the line segment $[\frac{1}{25}(4 - \sqrt{41}), 1/3]$.

		The curve $O_{2}(A)$ along with a portion of the boundary of its convex hull
		is shown in \Cref{fig:nodalsingularityquartic}. The plane $t + \frac{1}{3}x=0$
		defines a two-dimensional face of this convex hull, whereas
		$t + \frac{1}{25}(4 - \sqrt{41})x=0$ is a limit of supporting hyperplanes of
		edges.
	\end{example}

	\begin{figure}
		\includegraphics[height=2in]{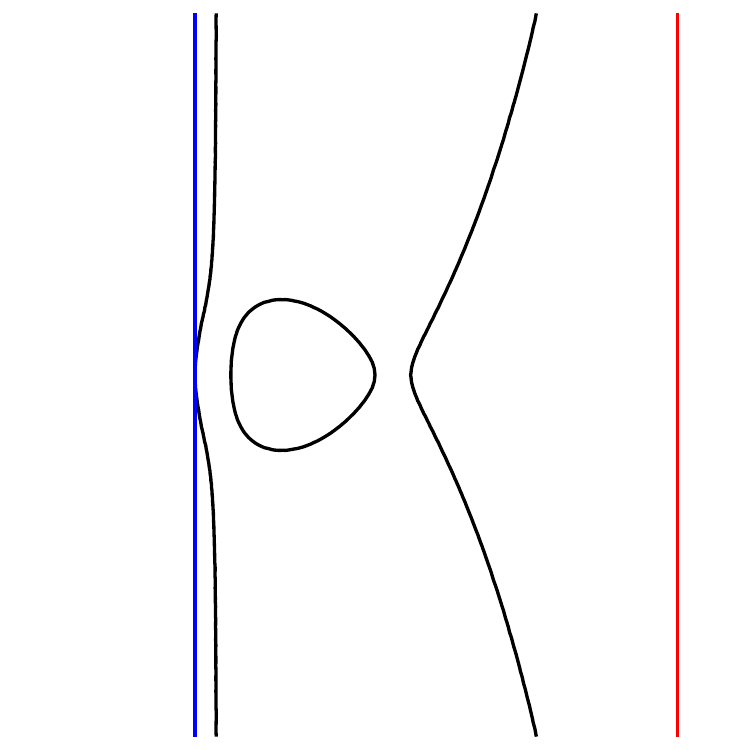}
		\includegraphics[height=2in]{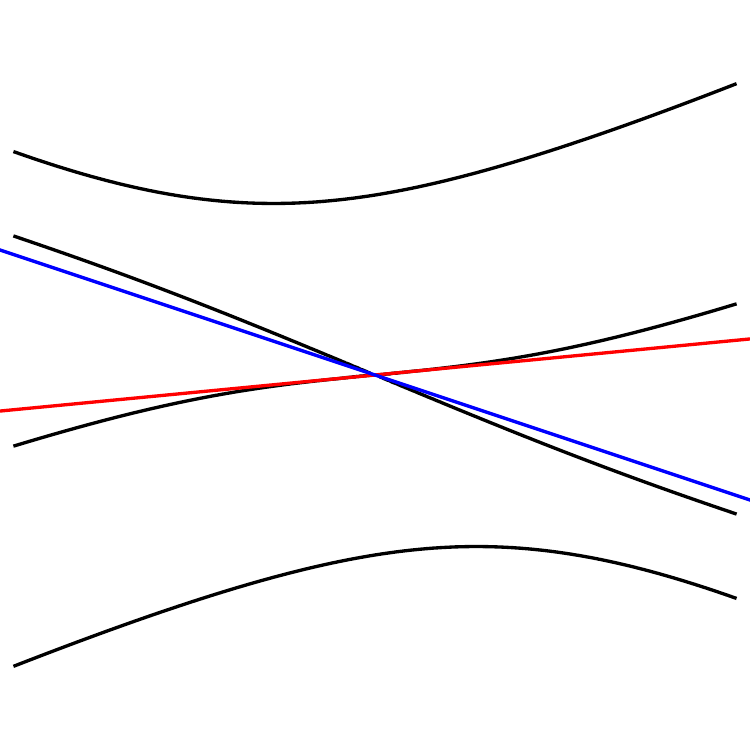}
		\includegraphics[height=2in]{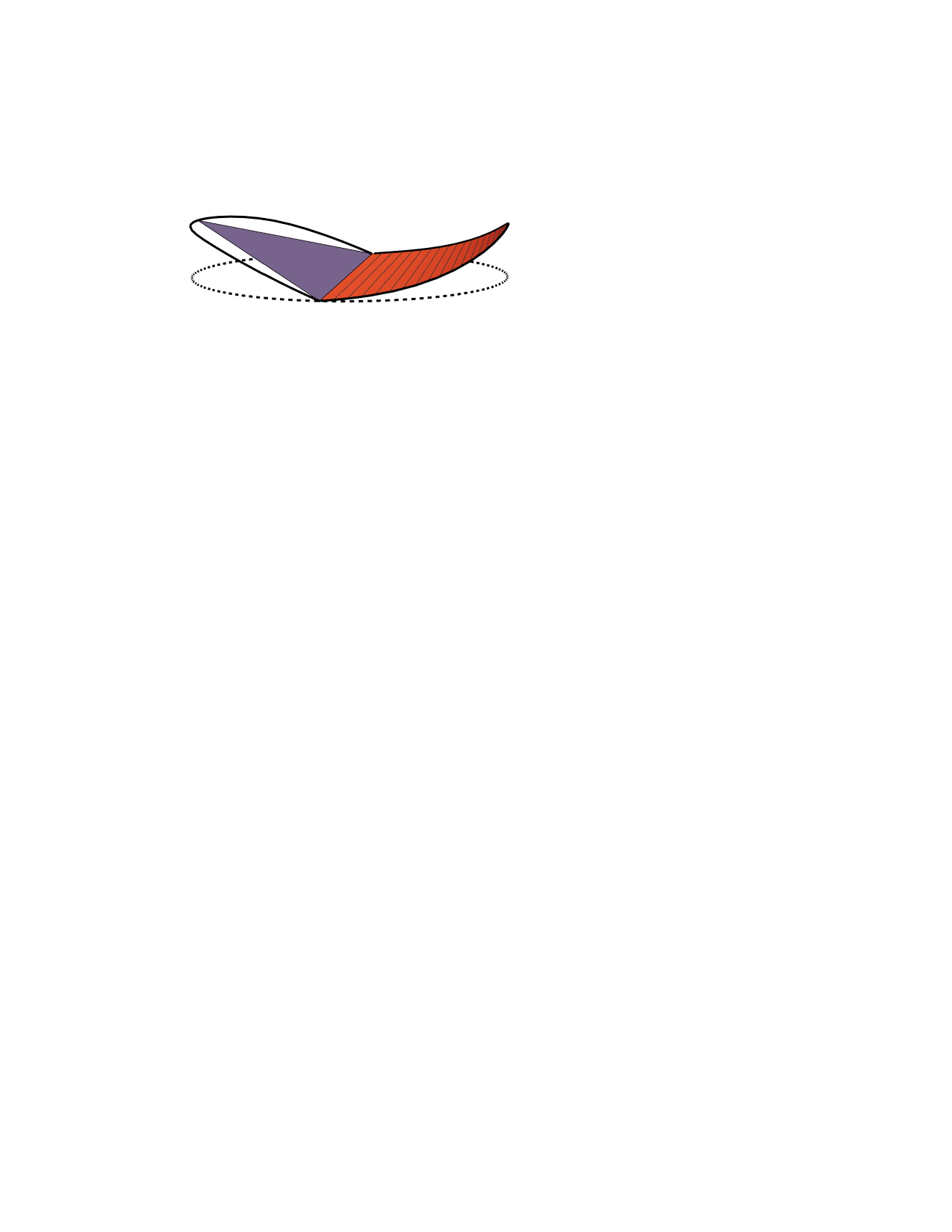}
		\caption{The curve $\mathcal{V}(f_{A})$ and two $[0:0:1]$-tangent lines from
		\Cref{ex:quarticPtangent} in the $\{t=1\}$ and $\{y=1\}$ affine charts and
		curve $O_{2}(A)$.}
		\label{fig:nodalsingularityquartic}
	\end{figure}

	Lastly, we consider the case when $k \geq (n+1)/2$. It is known that if $\Lambda
	_{k}(A) = \{ a + \ii b \}$ is nonempty then $a + \ii b$ must be an eigenvalue of
	$A$ with geometric multiplicity $\geq 2k-n$ \cite[Proposition 2.2]{CHOI2006828}.
	Here we give an algebraic analogue of this result.

	\begin{lemma}
		\label{lm:linearFactors} If $k \geq (n+1)/2$ and $a+ \ii b \in \Lambda_{k}(A)$
		then $(t + ax + by)^{2k - n}$ divides the Kippenhahn polynomial $\det(tI_{n}
		+ x \Re A + y \Im A )$.
	\end{lemma}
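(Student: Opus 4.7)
The plan is to use the eigenvalue descriptions of $\Lambda_k(A)$ from \Cref{thm:MembershipTest} to force an identity between eigenvalues of $x\Re A + y\Im A$ that is strong enough to produce the claimed factorization of $f_A$.

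First I would invoke parts (d) and (e) of \Cref{thm:MembershipTest}: the hypothesis $a+\ii b \in \Lambda_k(A)$ is equivalent to
\begin{equation*}
\lambda_{n-k+1}(x\Re A + y\Im A) \;\leq\; ax+by \;\leq\; \lambda_k(x\Re A + y\Im A)
\end{equation*}
for every $(x,y)\in \R^2$. The condition $k\geq (n+1)/2$ means $n-k+1\leq k$, so by the ordering convention on eigenvalues we always have $\lambda_{n-k+1}(M)\geq \lambda_k(M)$ for Hermitian $M$. Combining the two inequalities pinches every eigenvalue in between, giving
\begin{equation*}
\lambda_{n-k+1}(x\Re A + y\Im A) = \lambda_{n-k+2}(\cdots) = \cdots = \lambda_k(x\Re A + y\Im A) = ax+by
\end{equation*}
for every $(x,y)\in\R^2$. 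In other words, $ax+by$ is an eigenvalue of $x\Re A + y\Im A$ of multiplicity at least $k-(n-k+1)+1=2k-n$. Translating this into the characteristic polynomial, for every fixed $(x_0,y_0)\in\R^2$, the univariate polynomial $t\mapsto f_A(t,x_0,y_0)$ is divisible by $(t+ax_0+by_0)^{2k-n}$.

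The remaining task is to lift this pointwise divisibility to an identity of polynomials in three variables. I would perform the substitution $t=t'-ax-by$ and expand
\begin{equation*}
f_A(t'-ax-by,\,x,\,y) \;=\; \sum_{j\geq 0} c_j(x,y)\,(t')^j, \qquad c_j\in\R[x,y].
\end{equation*}
The pointwise statement above says exactly that $c_j(x_0,y_0)=0$ for all $(x_0,y_0)\in\R^2$ whenever $j<2k-n$. Since $\R^2$ is Zariski dense in $\C^2$, each such $c_j$ vanishes identically as a polynomial. Reverting the substitution yields $f_A(t,x,y)=(t+ax+by)^{2k-n}\cdot q(t,x,y)$ for some $q\in\R[t,x,y]$, which is the desired divisibility.

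The only nontrivial content is the opening eigenvalue-pinching step, which uses the regime $k\geq (n+1)/2$ in an essential way; the passage from pointwise divisibility in $t$ to global polynomial divisibility is standard. So I do not anticipate a significant obstacle beyond getting the index bookkeeping right.
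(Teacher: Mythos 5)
Your argument is correct, and it takes a genuinely different algebraic route from the paper's. Both proofs begin from the same observation — that the hypothesis $a+\ii b\in\Lambda_k(A)$ and $k\geq(n+1)/2$ force the matrix pencil $tI_n+x\Re A+y\Im A$ to have corank $\geq 2k-n$ everywhere on the line $t+ax+by=0$ — but they convert that observation to polynomial divisibility in different ways. The paper uses parts (b),(c) of \Cref{thm:MembershipTest} (sign counts), deduces that all $d\times d$ minors of the pencil vanish on the line for $d=2(n-k)+1$, concludes by irreducibility that $\ell=t+ax+by$ divides these minors, and then invokes \Cref{lem:detDivisors} (an adjugate-based induction) to conclude $\ell^{2k-n}\mid\det$. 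You instead use parts (d),(e) to pinch $\lambda_{n-k+1}=\cdots=\lambda_k=ax+by$, obtain the pointwise factorization $(t+ax_0+by_0)^{2k-n}\mid f_A(t,x_0,y_0)$ for each fixed real $(x_0,y_0)$, and lift it to a polynomial identity by the substitution $t\mapsto t'-ax-by$ together with density of $\R^2$. Your route is more self-contained: it avoids the auxiliary lemma about minors entirely, at essentially no extra cost. The paper's choice is natural in context because \Cref{lem:detDivisors} is needed again in the proof of \Cref{lem:SvS'}, so it pays for itself; but as a stand-alone proof of this lemma, your version is simpler.
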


	\begin{proof}
		Let $\ell = t + ax+ by$. By the Membership test \autoref{thm:MembershipTest},
		for all real points in the line $\mathcal{V}_{\R}(\ell)$, the matrix
		$M = tI_{n} + x \Re A + y \Im A$ has at most $n-k$ strictly positive
		eigenvalues and at most $n-k$ strictly negative eigenvalues. It follows the rank
		of $M$ at this point is at most $2(n-k)$, meaning that its $d\times d$
		minors are all zero for $d=2(n-k)+1$. Since this holds for every point $(t,x,
		y)\in \mathcal{V}_{\R}(\ell)$ and $\ell$ is irreducible, $\ell$ divides the
		$d\times d$ minors of $M$. By \Cref{lem:detDivisors} below, it follows that
		$\ell^{n-d+1}= \ell^{2k-n}$ divides $f_{A} =\det(M)$.
	\end{proof}

	The converse of this lemma is not true even for the case $n = 3$ and $k = 2$. For
	example, one could take $A$ to be a $3\times 3$ diagonal matrix whose diagonal
	entries are not co-linear in $\C$. One can check that in this case $f_{A}$ has
	three linear factors but that $\Lambda_{2}(A)$ is empty.

	\begin{lemma}
		\label{lem:detDivisors} Let $M$ be an $n\times n$ matrix with entries in a unique
		factorization domain $R$. Let $\ell\in R$ be irreducible and suppose that
		$\ell$ divides all $d\times d$ minors of $M$, where $d\leq n$. Then
		$\ell^{n-d+1}$ divides $\det(M)$.
	\end{lemma}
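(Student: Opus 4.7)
The plan is to reduce to the case where $R$ is a discrete valuation ring by localizing at the prime ideal $(\ell)$, and then to apply the Smith normal form to track the power of $\ell$ appearing in $\det(M)$.

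First, I would observe that since $R$ is a UFD and $\ell$ is irreducible (hence prime), the localization $R_{(\ell)}$ is a discrete valuation ring with uniformizer $\ell$. Let $v$ denote the associated discrete valuation. Using unique factorization in $R$, one checks that for any $r\in R$ and integer $k\geq 0$, $\ell^{k}$ divides $r$ in $R$ if and only if $v(r)\geq k$. So it suffices to work in $R_{(\ell)}$ and prove $v(\det(M))\geq n-d+1$.

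Next, I would invoke the Smith normal form over the DVR $R_{(\ell)}$: there exist matrices $U,V\in GL_{n}(R_{(\ell)})$ and a diagonal matrix $D=\mathrm{diag}(\ell^{a_{1}},\ldots,\ell^{a_{n}})$ with $0\leq a_{1}\leq\cdots\leq a_{n}$ such that $M=UDV$. Since left and right multiplication by invertible matrices preserves the ideal generated by the $d\times d$ minors, the ideal of $d\times d$ minors of $M$ in $R_{(\ell)}$ coincides with that of $D$. The $d\times d$ minors of $D$ are $\ell^{a_{i_{1}}+\cdots+a_{i_{d}}}$ for $i_{1}<\cdots<i_{d}$, so this ideal is principal, generated by $\ell^{a_{1}+\cdots+a_{d}}$.

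The hypothesis that $\ell$ divides every $d\times d$ minor of $M$ then gives $a_{1}+\cdots+a_{d}\geq 1$. Since the $a_{j}$ are non-negative integers arranged in non-decreasing order, this forces $a_{d}\geq 1$, hence $a_{d}\leq a_{d+1}\leq\cdots\leq a_{n}$ are all $\geq 1$, yielding $v(\det(M))=\sum_{j=1}^{n}a_{j}\geq n-d+1$. I expect the only subtle point to be verifying that divisibility in $R$ can genuinely be tested via the $\ell$-adic valuation on $R_{(\ell)}$, which is where the UFD hypothesis is essential; once that reduction is in place, the Smith normal form and the elementary monotonicity count make the result immediate.
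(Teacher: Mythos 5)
Your proof is correct, and it takes a genuinely different route from the one in the paper. The paper argues by induction on $n-d$, using the adjugate identity $M^{\rm adj}M=\det(M)I_n$ to bootstrap the divisibility of $(n-1)\times(n-1)$ minors up to the determinant; this is entirely elementary and stays inside the UFD. You instead localize at the prime $(\ell)$ to land in a DVR, apply Smith normal form $M=UDV$ with $D=\mathrm{diag}(\ell^{a_1},\ldots,\ell^{a_n})$, and read off the conclusion from the monotone exponents $a_1\le\cdots\le a_n$; the key inputs are that $R_{(\ell)}$ is a DVR (true for any UFD $R$ and irreducible $\ell$, since every nonzero element of $R_{(\ell)}$ has the form $u\ell^k$ with $u$ a unit), that the ideal of $d\times d$ minors is preserved by left/right multiplication by invertible matrices (Cauchy--Binet), and that $\ell$-divisibility in $R$ is detected by the valuation on $R_{(\ell)}$. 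One small point worth flagging explicitly is the degenerate case $\det(M)=0$: then the claim is trivial, and for $\det(M)\neq 0$ all the $a_j$ are finite, which is what the Smith normal form step uses. Your argument is more structural and makes the mechanism transparent --- the hypothesis forces $a_d\ge 1$, hence $a_j\ge 1$ for $j\ge d$, giving $v(\det M)=\sum a_j\ge n-d+1$ --- at the cost of invoking DVR localization and Smith normal form, whereas the paper's adjugate induction is more elementary and self-contained.
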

	\begin{proof}
		We induct on $n-d$ and follow the proof of \cite[Lemma 4.7]{PLAUMANN201348}.
		For $n-d=0$, this holds by assumption. Suppose that $n-d\geq 1$ and that
		$\ell$ divides all $d\times d$ minors of $M$. This also holds for any of the
		$d\times d$ minors of any $(n-1)\times (n-1)$ submatrix of $M$, so, by induction
		$\ell^{n-d}$ divides the $(n-1)\times (n-1)$ minors of $M$. We use the
		identity $M^{\rm adj}M = \det(M)I_{n}$, where $M^{\rm adj}$ denotes the adjugate
		matrix of $M$, whose entries are signed $(n-1)\times (n-1)$ minors of $M$. Taking
		the determinant of boths sides, we find that
		$\det(M^{\rm adj})= \det(M)^{n-1}$.

		Suppose that $\det(M) = \ell^{m}h$ where $\ell$ does not divide $h$. Then $\det
		(M^{\rm adj}) = \ell^{m(n-1)}h^{n-1}$. By assumption, $\ell^{n-d}$ divides all
		entries of $M^{\rm adj}$ and so $\ell^{n(n-d)}$ divides its determinant.
		Since $\ell$ is irreducible, it follows that $\ell^{n(n-d)}$ must divide $\ell
		^{m(n-1)}$, giving $n(n-d)\leq m(n-1)$. If $m\leq n-d$, then
		$m(n-1)\leq (n-d)(n-1)< (n-d)n$. Since this contradicts the inequality above,
		we conclude that $n-d+1\leq m$, as desired.
	\end{proof}

	\section{Algorithms}
	\label{sec:algorithm}

	In this section we discuss an algorithm to compute the rank-$k$ numerical range
	of an $n\times n$ matrix. This builds off \Cref{alg:membership}, which tests membership
	of a given point in $\Lambda_{k}(A)$, as well as the characterizations of the
	boundary of $\Lambda_{k}(A)$ in \Cref{sec:algBoundary} and of algebraic
	conditions satisfied when $\Lambda_{k}(A)$ is lower dimensional in
	\Cref{sec:Dim01}. To build up to the full algorithm, we first establish some
	subroutines to compute $p$-tangents and tritangent lines of
	$\mathcal{V}(f_{A})$, as defined in \Cref{sec:Dim01}.

	Many of these computations involve the discriminant. Formally, given a polynomial
	$p(t) = \sum_{k=0}^{d}a_{k}t^{k}$, the discriminant is a polynomial
	${\rm Disc}_{t}(p)$ in $\mathbb{Q}[a_{0}, \hdots, a_{d}]$. It is the unique minimal
	polynomial (up to scaling) satisfying ${\rm Disc}_{t}(p)=0$ whenever $p(t)$ that
	has a double root. For a multivariate polynomial
	$f\in \C[x_{1}, \hdots, x_{n}]$, we use ${\rm Disc}_{x_i}(f)$ to denote the discriminant
	of $f$ interpreted as a univariate polynomial in $x_{i}$. This is a polynomial
	in $\C[x_{j}:j\neq i]$. For a detailed discussion on the discriminant and its properties
	see \cite[Section 4.1]{Basu2006}.

	\begin{algorithm}
		[H]
		\caption{Singularity Tangents
		\smallskip
		\\
		\textbf{Input:} $A\in \C^{n\times n}$ and $p\in \V(f_{A})$ \\
		\textbf{Output:}
		$\mathcal{T}= \{(a,b)\in \R^{2}: t+ax+by \text{ is }p\text{-tangent to }\mathcal{V}
		(f_{A}) \}$.
		\smallskip
		}
		\label{alg:SingTangents}
		\begin{algorithmic}
			[0] \State Take an invertible $L:\R^{2}\to \R^{2}$ as in \eqref{eq:coordChange}
			with $[u_{02}:u_{12}:u_{22}]=p$. \State Let $f = f_{L\cdot A}$ and let $f^{\rm
			red}$ be the square-free part of $f$. \State Let $h_{d}(t,x)$ to be the
			smallest degree part of $f^{\rm red}(t,x,1)$. \State Take $\mathcal{S}= \{c
			\in \R : h_{d}(-c,1)=0\}\cup \{c\in \R :{\rm Disc}_{y}(f^{\rm red})(-c,1)=0
			\}$ \State \Return $\mathcal{T}= \{L^{-1}(c,0) : c\in \mathcal{S}\}$.
		\end{algorithmic}
	\end{algorithm}

	\begin{lemma}
		The output of \Cref{alg:SingTangents} is the set of $(a,b)\in \R^{2}$ for which
		$t+ax+by$ is $p$-tangent to $\mathcal{V}(f_{A})$.
	\end{lemma}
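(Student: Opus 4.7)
The plan is to verify the output of \Cref{alg:SingTangents} by exploiting the coordinate change. The projective-linear substitution $\phi:(t,x,y)\mapsto(t+u_{01}x+u_{02}y,\,u_{11}x+u_{12}y,\,u_{21}x+u_{22}y)$ sends $(0,0,1)$ to $p$ and satisfies $f_{L\cdot A}=f_{A}\circ\phi$, so $p$-tangents to $\V(f_{A})$ pull back to $[0:0:1]$-tangents to $\V(f_{L\cdot A})$. A line $t+ax+by=0$ in the original coordinates pulls back under $\phi$ to $t+a'x+b'y=0$ with $(a',b')=L(a,b)$; setting $(a',b')=(c,0)$ is equivalent to the condition that the original line passes through $p=(u_{02},u_{12},u_{22})$, so $(a,b)=L^{-1}(c,0)$ bijectively parameterizes those lines $t+ax+by=0$ in the original coordinates passing through $p$. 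Hence it suffices to determine which $c\in\R$ give $[0:0:1]$-tangent lines $t+cx=0$ to $\V(f)$, where $f=f_{L\cdot A}^{\rm red}$.

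Write $f(t,x,y)=\sum_{j=d}^{m}h_{j}(t,x)\,y^{m-j}$, where $h_{j}$ is homogeneous of degree $j$ and $d$ is the multiplicity of $[0:0:1]$ on $\V(f)$; by \Cref{lm:singDegree} we have $h_{0}=\cdots=h_{d-1}=0$ and $h_{d}\not\equiv 0$. Taking $q=(-c,1,0)$, a direct substitution together with the homogeneity of $h_{j}$ yields
\[
f\bigl(rq+s(0,0,1)\bigr)=\sum_{j=d}^{m}r^{j}h_{j}(-c,1)\,s^{m-j}=r^{d}\,G(r,s),\qquad G(r,s)=\sum_{j=d}^{m}r^{j-d}h_{j}(-c,1)\,s^{m-j},
\]
where $G$ is homogeneous of degree $m-d$ and $G(1,s)=f(-c,1,s)$. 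Condition~(II) of \Cref{def:y-tangent} is equivalent to $G(0,1)=h_{d}(-c,1)=0$, recovering \Cref{lm:limittangent}, and condition~(I) is equivalent to $G$ having a double root in $\PP^{1}$.

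For the second defining condition of $\mathcal{S}$ I would identify the existence of a double root of $G$ in $\PP^{1}$ with the vanishing of ${\rm Disc}_{y}(f)(-c,1)$. Viewing $f$ as a polynomial in $y$ of degree $m-d$ with leading coefficient $h_{d}(t,x)$, the formula ${\rm Disc}_{y}(f)=h_{d}^{2(m-d)-2}\prod_{i<j}(\alpha_{i}-\alpha_{j})^{2}$ (for the roots $\alpha_{i}$ of $f(t,x,y)$ in $y$) shows that ${\rm Disc}_{y}(f)(-c,1)=0$ iff either $f(-c,1,y)$ has a repeated root in $\C$ (a double affine root of $G$ on $\PP^{1}$) or $h_{d}(-c,1)=0$ (the ``root at infinity'' case, in which $[0:1]$ is a root of $G$). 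Conversely, a double root of $G$ at $[0:1]$ forces $h_{d}(-c,1)=0$. Combining conditions~(I) and~(II) therefore reproduces exactly the defining conditions of $\mathcal{S}$, and $\mathcal{T}=\{L^{-1}(c,0):c\in\mathcal{S}\}$ is the desired set.

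The main obstacle I anticipate is the last step: matching the vanishing of the discriminant to double roots of the binary form $G$ on all of $\PP^{1}$ without missing or adding spurious $c$-values. This requires careful handling of the low-degree regime $m-d\leq 1$ (where the discriminant is conventionally nonvanishing) and of the degenerate situation in which $f(-c,1,y)$ vanishes identically (which happens precisely when $t+cx$ is a linear factor of $f$), cases where the discriminant formula must be interpreted with care.
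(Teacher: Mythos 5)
Your approach is the same as the paper's: change coordinates so that $p=[0:0:1]$, split $p$-tangency into types~(I) and~(II), detect type~(II) via $h_d(-c,1)=0$ (this is exactly \Cref{lm:limittangent}), and detect the remaining tangencies via ${\rm Disc}_y(f^{\rm red})(-c,1)=0$. The concern you raise at the end resolves cleanly once you exploit the union structure of $\mathcal{S}$: the set $\{c : h_d(-c,1)=0\}$ is already included and coincides with the type~(II) locus (and also absorbs the cases where $t+cx$ divides $f^{\rm red}$ and where $G$ has a double root at $[0:1]$), so it suffices to verify agreement on $\{c : h_d(-c,1)\neq 0\}$. There $f^{\rm red}(-c,1,y)$ retains its full degree $m-d$ with nonzero leading coefficient, $[0:1]$ is not a root of $G$, and the discriminant detects precisely the finite double roots of $G$, i.e., type~(I). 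One small correction: your intermediate claim that ${\rm Disc}_y(f)(-c,1)=0$ holds iff $f(-c,1,y)$ has a repeated root or $h_d(-c,1)=0$ is false in general — for instance $f=t^2y+x^3$ has ${\rm Disc}_y(f)\equiv 1$ while $h_d(0,1)=0$ — the generic formula $h_d^{2(m-d)-2}\prod(\alpha_i-\alpha_j)^2$ does not directly show anything at points where $h_d$ vanishes. This inaccuracy does not affect the conclusion for the reason just given, but it is worth being precise about since your low-degree and identically-vanishing worries all live on the locus $h_d(-c,1)=0$, where the discriminant factor of $\mathcal{S}$ is simply irrelevant.
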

	\begin{proof}
		Note that $t+ax+by$ is a $p$-tangent line of $f_{A}$ if and only if $t+cx$ is
		a $[0:0:1]$-tangent line of $f_{L\cdot A}$, where $(c,0) = L(a,b)$. Therefore,
		it suffices to show that the computed sets $\mathcal{S}$ is the set of
		$c\in \R$ for which $t+cx$ is $[0:0:1]$-tangent to
		$\mathcal{V}(f_{L\cdot A})$. By \Cref{lm:limittangent}, $t+cx$ is a
		$[0:0:1]$-tangent line of the form (II) if and only if $h_{d}(-c,1)=0$.

		We claim that $t+cx$ is a $[0:0:1]$-tangent line of the form (I) with
		$\ell\neq r$ if and only if ${\rm Disc}_{y}(f^{\rm red})(-c,1)=0$. As above,
		the line $t+cx=0$ is parametrized by the points $(-c,1,0)$ and $(0,0,1)$. In
		particular, $f^{\rm red}(r(-c,1,0)+s(0,0,1))=f^{\rm red}(-cr,r,s)$ has a
		square factor $\ell^{2}\neq r^{2}$ in $\C[r,s]$ if and only if its
		restriction to $r=1$ has a repeated root. This happens if and only if its discriminant,
		${\rm Disc}_{y}(f^{\rm red})(-c,1)$, is zero.
	\end{proof}

	As discussed in \Cref{cor:SingRestriction}, the presence of antipodal points
	gives a restriction on higher-rank numerical ranges and guaranties that its
	dimension is at most 1. Given our definition of antipodal points it is
	possible to have infinitely many of them, in particular, when the Kippenhanh
	curve has irreducible components of multiplicity $\geq 2$. However, we show
	that it is only necessary to consider antipodal points that are also
	singularities of $\mathcal{V}(f_{A})$.

	\begin{lemma}
		\label{lem:SvS'} Suppose $\lambda_{k}(p_{1}, p_{2}) = \lambda_{n-k+1}(p_{1},
		p_{2})$ for some $(p_{1}, p_{2})\in \R^{2}\backslash\{(0,0)\}$ and $k<(n+1)/2$.
		Then one of the following holds:
		\begin{itemize}
			\item[(i)] $[-\lambda_{k}(p_{1}, p_{2}):p_{1}:p_{2}]$ is a singularity of $\mathcal{V}
				(f^{\rm red}_{A})$,

			\item[(ii)] $\Lambda_{k}(A)= \{a+\ii b\}$ for some $a,b\in \R$ where $(t+ax
				+by)^{2}$ divides $f_{A}$, or

			\item[(iii)] $\Lambda_{k}(A)$ is empty.
		\end{itemize}
	\end{lemma}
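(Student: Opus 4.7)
I prove the lemma by assuming (i) is false and showing that (ii) or (iii) must hold.

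First, suppose $p := [-\lambda_k(p_1,p_2):p_1:p_2]$ is not a singularity of $\mathcal{V}(f_A^{\mathrm{red}})$. By Lemma~\ref{lm:restricting singularity}(ii), the multiplicity of $f_A$ at $p$ is at least $2$, while $p$ is smooth on $f_A^{\mathrm{red}}$. These two facts force a unique irreducible factor $g$ of $f_A$ to pass through $p$ (smoothly), with $g$ appearing in $f_A$ with multiplicity $m \geq 2$ (the multiplicity of $p$ in $f_A$ equals $m \cdot \mathrm{mult}_p(g) = m$). Using Lemma~\ref{lm:singDegree} together with the antipodal hypothesis $\lambda_k = \lambda_{k+1} = \cdots = \lambda_{n-k+1}$ at $(p_1,p_2)$, I also obtain the quantitative bound $m \geq n-2k+2$.

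If $\Lambda_k(A)$ is empty we are in case (iii). Otherwise, Lemma~\ref{lm:restricting singularity}(i) confines $\Lambda_k(A)$ to the line $L = \{a+\ii b \in \C : ap_1+bp_2 = \lambda_k(p_1,p_2)\}$. The heart of the proof is to show that $g$ must in fact be linear, so that $g = t + a_0 x + b_0 y$ and $(t+a_0 x+b_0 y)^2 \mid f_A$ directly. Here I would combine the degree bound $m\deg g \leq n$ with the convex-geometric characterization of Proposition~\ref{prop:LowerDimFace}: since $\pm p \in O_k(A)$, the origin lies on the boundary of $\conv(O_k(A))$, and the nonemptiness of $\Lambda_k(A)$ places $(0,0,0)$ in the relative interior of a face $F$ of $\conv(O_k(A))$ with $\dim F \in \{1,2\}$. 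Theorem~\ref{thm:ptangents} then describes the extreme points of $\Lambda_k(A)$ via $p$-tangent lines, and since $p$ is smooth on $f_A^{\mathrm{red}}$ the available $p$-tangents are tightly restricted; this should force $\deg g = 1$ when $\Lambda_k(A) \neq \emptyset$.

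Once $g = t+a_0 x+b_0 y$ is linear with $(t+a_0 x+b_0 y)^m \mid f_A$ and $m\geq 2$, the Hermitian pencil decomposition corresponding to a squared linear factor yields a common $m$-dimensional subspace $W\subset \C^n$ on which $\Re(A)|_W = a_0 I_W$ and $\Im(A)|_W = b_0 I_W$. Combining the inequality $m\geq n-2k+2$ with $k<(n+1)/2$ and the containment $\Lambda_k(A)\subseteq L$, I can produce a rank-$k$ isometry witnessing $a_0 + \ii b_0 \in \Lambda_k(A)$, and then invoke Proposition~\ref{prop:LowerDimFace}(ii) to collapse $\Lambda_k(A)$ to $\{a_0+\ii b_0\}$, giving (ii). The main obstacle is the linearity step for $g$: the degree bound $m\deg g \leq n$ alone does not rule out $\deg g \geq 2$, so a finer convexity argument exploiting the antipodal structure of $O_k(A)$ and the $p$-tangent characterization will be required to complete this step.
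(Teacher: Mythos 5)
There is a genuine gap, which you yourself flag: you cannot complete the "linearity step" for the irreducible factor $g$ through $p$, and your proposed route via Proposition~\ref{prop:LowerDimFace}, Theorem~\ref{thm:ptangents}, and an explicit isometry construction is both incomplete and a detour from what actually works. The paper's proof never isolates a factor $g$ or invokes the $p$-tangent machinery at all.

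The key idea you are missing is an implicit-function-theorem argument showing the antipodal condition propagates to a whole neighborhood. Assume (i) and (iii) fail, so $p$ is a smooth point of $\mathcal{V}(f_A^{\rm red})$ and there exists $a+\ii b\in\Lambda_k(A)$ with $\ell=t+ax+by$. By \cite[Lemma~2.4]{PLAUMANN201348}, $\tfrac{\partial f_A^{\rm red}}{\partial t}(p)\neq 0$, so the implicit function theorem gives a \emph{unique} continuously differentiable function $\varphi$ near $(p_1,p_2)$ with $f_A^{\rm red}(-\varphi(x,y),x,y)=0$. But each of the eigenvalue functions $\lambda_j$ for $k\leq j\leq n-k+1$ is locally continuous, satisfies the same equation, and takes the same value $\lambda_k(p_1,p_2)$ at $(p_1,p_2)$; by uniqueness they must all coincide with $\varphi$ on a neighborhood $U$. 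Hence $\lambda_k(q_1,q_2)=\lambda_{n-k+1}(q_1,q_2)$ for \emph{all} $(q_1,q_2)\in U$, and by the argument of Lemma~\ref{lm:restricting singularity}(i), all the corresponding points $(-\lambda_k(q_1,q_2),q_1,q_2)$ lie on $\mathcal{V}(\ell)$. Since $k<n-k+1$, at each such point the pencil $tI+x\Re A+y\Im A$ has an eigenvalue of multiplicity $\geq 2$, i.e.\ corank $\geq 2$, so all $(n-1)\times(n-1)$ minors vanish there. These are infinitely many points on the line $\ell=0$; since each minor has degree $n-1$, B\'ezout forces $\ell$ to divide every $(n-1)\times(n-1)$ minor, and Lemma~\ref{lem:detDivisors} (with $d=n-1$) gives $\ell^2\mid f_A$. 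Finally, since this holds for every $a+\ii b\in\Lambda_k(A)$ and $f_A$ has bounded degree, $\Lambda_k(A)$ (being convex) cannot contain more than one point, giving (ii). This avoids the degree-bound impasse you ran into and makes no use of $p$-tangents or explicit isometries.
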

	\begin{proof}
		Let $\ell = t + ax + by$. Suppose that (i) and (iii) are false. That is,
		there exists some point $a+\ii b \in \Lambda_{k}(A)$ and also
		$[-\lambda_{k}(p_{1},p_{2}):p_{1}:p_{2}]$ is a smooth point of
		$\mathcal{V}(f_{A}^{\rm red})$. In particular,
		$\frac{\partial f_{A}^{\rm red}}{\partial t}\not = 0$ by
		\cite[Lemma 2.4]{PLAUMANN201348}. By the implicit function theorem there exists
		a neighborhood $(p_{1},p_{2}) \in U \subseteq \mathbb{R}^{2}$ and a unique continuously
		differentiable function $\varphi(x,y)$ such that
		\[
			f_{A}^{\rm red}(-\varphi(x,y),x,y)=0 \text{ for all }(x,y) \in U.
		\]
		Since $\lambda_{k}, \ldots, \lambda_{n-k}$ are locally continuously differentiable
		functions that satisfy the previous statement they must all agree on $U$.
		Thus, for every $(q_{1},q_{2})$ in $U$, the point
		$[-\lambda_{k}(q_{1},q_{2}):q_{1}:q_{2}]$ is a smooth point of
		$\mathcal{V}(f_{A}^{\rm red})$ and
		$\lambda_{k}(q_{1},q_{2}) = \lambda_{n-k+1}(q_{1},q_{2})$. Following the arguments
		of \autoref{lm:restricting singularity} we can conclude that $-\lambda_{k}(q_{1}
		,q_{2}) +aq_{1} +bq_{2} = 0$. Since there are infinitely many points
		$(t,x,y$ in $\mathcal{V}(\ell)$ at which the matrix
		$tI_{n} + x\Re A + y \Im A$ has corank $\geq 2$, it follows that $\ell$ must
		divide all of the $(n-1) \times (n -1)$ minors by Bezout's theorem. Then, by
		\Cref{lem:detDivisors}, $\ell^{n - (n + 1) + 1}= \ell^{2}$ divides $f$.
	\end{proof}

	As shown in \Cref{lm:linearFactors}, when $k \geq (n+1)/2$, we only need to check
	points corresponding to linear factors with a multiplicity of $2k-n$. For the
	sake of completeness we present a simple algorithm that computes
	$\Lambda_{k}(A)$ under this assumption.

	\begin{algorithm}
		[H]
		\caption{Computing the rank-$k$ numerical range of a matrix for
		$(n+1)/2\leq k\leq n$
		\smallskip
		\\
		\textbf{Input:} $A\in \C^{n\times n}$, $k\in \mathbb{Z}_{\geq 0}$ with
		$(n+1)/2\leq k\leq n$ \\
		\textbf{Output:} $d \in \{-1,0\}$ : dimension of $\Lambda_{k}(A)$ \\
		If $d = -1$, return the empty set.\\
		If $d = 0$, returns $(a,b) \in \mathbb{R}^{2}$ such that $\Lambda_{k}(A) = \{
		a + bi\}$.
		\smallskip
		}
		\label{alg:main2}
		\begin{algorithmic}
			[0] \State $f_{A} = \det(t I_{n} + x \Re A + y \Im A)$. \State Compute all
			distinct linear factors of $f_{A}$ (or all factors appearing with power $\geq
			2k-n$) and perform membership test on each.
		\end{algorithmic}
	\end{algorithm}

	At this point we present the main algorithm for computing rank-$k$ numerical ranges.
	Roughly speaking, this algorithm splits the problem in different cases given by
	the presence of antipodal points and the possible dimension of $\Lambda_{k}(A)$.
	Then, for each of these cases the problem is reduced to applying the membership
	test to a finite list of points to determine which of those points belong to $\Lambda
	_{k}(A)$.

	\begin{algorithm}
		[H]
		\caption{Computing the rank-$k$ numerical range of a matrix for $k< (n+1)/2$
		\smallskip
		\\
		\textbf{Input:} $A\in \C^{n\times n}$, $k\in \mathbb{Z}$ with
		$1\leq k< (n+1)/2$ \\
		\textbf{Output:} ${\rm dim}\in \{-1,0,1,2\}$ : dimension of $\Lambda_{k}(A)$
		\\
		If ${\rm dim}= 0$, returns $a+ \ii b\in \C$ such that
		$\Lambda_{k}(A) = \{ a + \ii b\}$.\\
		If ${\rm dim}= 1$, returns $a+ \ii b, c+ \ii d \in \C$ such that $\Lambda_{k}
		(A) ={\rm conv}\{a+ \ii b, c+ \ii d\}$.\\
		If ${\rm dim}= 2$, returns a polynomial $g_{A}$ vanishing on the boundary of
		$\Lambda_{k}(A)$ and a representative for each connected component of
		$\C\backslash \{a+\ii b : g_{A}(a,b)=0\}$ that is contained in
		$\Lambda_{k}(A)$.
		\smallskip
		}
		\label{alg:main}
		\begin{algorithmic}
			[0] \State Take $f_{A} = \det(t I_{n} + x \Re A + y \Im A)$ and let $f_{A}^{\rm
			red}$ be the square-free part of $f_{A}$ \State Compute $S= \{ [p_{0}:p_{1}
			:p_{2}] \in \Sing_{\R}(f_{A}^{\rm red}) : -p_{0}= \lambda_{k}(p_{1},p_{2})
			= \lambda_{n-k+1}(p_{1},p_{2})\}$ \State Compute $V= \Span S \subseteq \Pj^{2}$
			and $\dim V\in \{-1,0,1,2\}$ \If{$\dim V=2$} \Return{$\dim=-1$} \EndIf \If{$\dim V=1$}
			compute $\{[1:a:b]\}=V^{\perp}$ \State Use \Cref{alg:membership} to test
			$a+\ii b \in \Lambda_{k}(A)$. \If{$a+\ii b \in \Lambda_{k}(A)$} \Return{$\dim=0$, $\{a+ \ii b\}$}
			\Else \hspace{.05in}\Return{$\dim=-1$} \EndIf \EndIf \If{$\dim V=0$}
			$S = \{p\}$ \State Use \Cref{alg:SingTangents} to compute
			$\mathcal{T}= \{a+\ii b : t+ax+by \text{ is $p$-tangent to }\mathcal{V}(f_{A}
			)\}$
			\State Use \Cref{alg:membership} to compute
			$\mathcal{T}\cap \Lambda_{k}(A)$. \If{$|\mathcal{T}\cap \Lambda_{k}(A)|\geq 2$}
			compute end points $\{a+\ii b, c+\ii d\}$ of
			$\conv(\mathcal{T}\cap \Lambda_{k}(A))$ \State \Return{$\dim=1$, $\{a+\ii b, c+\ii d\}$}
			\EndIf \If{$\mathcal{T}\cap \Lambda_{k}(A)=\{a+\ii b\}$} \Return{$\dim=0$, $\{a+\ii b\}$}
			\EndIf \If{$\mathcal{T}\cap \Lambda_{k}(A)=\emptyset$} \Return{$\dim=-1$} \EndIf
			\EndIf \If{$\dim V = -1$} compute $g_{A}\in \R[a,b]$ as in \Cref{def:g_A}
			\State Compute representatives $\mathcal{R}$ for each bounded connected
			component of $\C \backslash \{a+\ii b : g_{A}(a,b)=0\}$ \State Use
			\Cref{alg:membership} to compute $\mathcal{R}\cap \Lambda_{k}(A)$. \If{$\mathcal{R}\cap \Lambda_{k}(A)\neq \emptyset$}
			\Return{$\dim=2$, $g_{A}$, $\mathcal{R}\cap \Lambda_{k}(A)$} \Else \hspace{.05in}Compute
			$\mathcal{C}= \{a+\ii b : t+ax+by\text{ divides $f_{A}$ or is tritangent to
			$f_{A}^{\rm red}$}\}$
			(Remark~\ref{rem:tritangents}). \State Use \Cref{alg:membership} to compute
			$\mathcal{C}\cap \Lambda_{k}(A)$. \If{$\mathcal{C}\cap \Lambda_{k}(A)=\{a+\ii b\}$}
			\Return{$\dim=0$, $\{a+\ii b\}$} \Else \hspace{.05in}\Return{$\dim=-1$}
			\EndIf \EndIf \EndIf
		\end{algorithmic}
	\end{algorithm}

	\begin{theorem}
		Let $A\in \C^{n \times n}$ for which $f_{A}$ is not a power of a linear form.
		For any $k< (n+1)/2$, \Cref{alg:main} computes the dimension and stated description
		of $\Lambda_{k}(A)$.
	\end{theorem}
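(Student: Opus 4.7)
The plan is a case analysis on $\dim V$, matching the four branches of \Cref{alg:main} and invoking the structure theorems from \Cref{sec:algBoundary} and \Cref{sec:Dim01} at each branch. Throughout, recall that $S$ collects the real singular points $[p_{0}:p_{1}:p_{2}]$ of $\V(f_{A}^{\rm red})$ at which $-p_{0} = \lambda_{k}(p_{1},p_{2}) = \lambda_{n-k+1}(p_{1},p_{2})$, and $V = \Span S \subseteq \Pj^{2}(\R)$.

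I would first dispatch the three cases $\dim V \in \{0,1,2\}$ using the results of \Cref{sec:Dim01}. When $\dim V = 2$, \Cref{cor:SingRestriction} immediately yields $\dim(\Lambda_{k}(A)) \leq -1$, so $\Lambda_{k}(A) = \emptyset$. When $\dim V = 1$, the same corollary forces $\Lambda_{k}(A)$ to be either empty or the single determined point $[1:a:b] \in V^{\perp}$, whose membership is certified by \Cref{alg:membership}. When $\dim V = 0$, write $S = \{p\}$; then \Cref{lm:restricting singularity}(i) confines $\Lambda_{k}(A)$ to a line, and \Cref{thm:ptangents} identifies $\Lambda_{k}(A)$ with the convex hull of its $p$-tangent elements. \Cref{alg:SingTangents} enumerates all such candidates, and applying the membership test extracts $\Lambda_{k}(A)$ and its endpoints.

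The principal difficulty lies in the case $\dim V = -1$, where there are no singular antipodal points and the algorithm must choose between a full-dimensional output (via $g_{A}$) and a lower-dimensional fallback. I would split further on $\dim(\Lambda_{k}(A))$. If $\dim(\Lambda_{k}(A)) = 2$, \Cref{thm:boundary} places the boundary of $\Lambda_{k}(A)$ inside $\V(g_{A})$; since $\Lambda_{k}(A)$ is compact with nonempty interior, its interior is a nonempty union of \emph{bounded} connected components of $\C \setminus \V(g_{A})$, and each such component is either entirely inside or entirely outside $\Lambda_{k}(A)$, so testing one representative per component certifies the output. If $\dim(\Lambda_{k}(A)) \leq 1$ instead, then $\mathcal{R} \cap \Lambda_{k}(A) = \emptyset$ because the representatives live in open components while $\Lambda_{k}(A)$ has empty interior, and the algorithm correctly falls through to $\mathcal{C}$. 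To guarantee $\Lambda_{k}(A) \subseteq \mathcal{C}$ in this subcase, I would combine \Cref{thm:tritangent} (which handles the situation of no antipodal points at all) with \Cref{lem:SvS'} (which handles non-singular antipodal points by forcing $(t+ax+by)^{2} \mid f_{A}$, in particular producing a linear factor of $f_{A}$).

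The hard step is justifying the exhaustiveness of this final case split: verifying that every point of $\Lambda_{k}(A)$ is either in $\mathcal{R}$ (full-dimensional scenario) or in $\mathcal{C}$ (low-dimensional scenario), and that each component tested by membership is correctly classified. Both rely on combining the dichotomy of \Cref{lem:SvS'}, the boundary containment from \Cref{thm:boundary}, and the correctness of \Cref{alg:membership} already established.
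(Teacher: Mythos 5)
Your proposal is correct and follows essentially the same route as the paper: the same case split on $\dim V$, the same invocations of \Cref{cor:SingRestriction}, \Cref{lm:restricting singularity}, and \Cref{thm:ptangents} for $\dim V\in\{0,1,2\}$, and the same combination of \Cref{thm:boundary}, \Cref{thm:tritangent}, and \Cref{lem:SvS'} for the $\dim V=-1$ branch (the paper organizes that branch by whether the set $S'$ of all antipodal points is empty, whereas you organize it by $\dim\Lambda_k(A)$, but these amount to the same dichotomy). One minor imprecision: the interior of $\Lambda_k(A)$ is not literally a union of components of $\C\setminus\V(g_A)$ since $\V(g_A)$ may cut through it; the precise statement, as in the paper, is that a point of $\mathcal{R}$ lies in $\Lambda_k(A)$ iff it lies in its interior, and that a $2$-dimensional $\Lambda_k(A)$ must contain some bounded component of the complement.
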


	\begin{proof}
		Let $S$ and $V$ be as computed in the algorithm. Note that $V$ is a
		projective linear subspace of $\PP^{2}$, so $\dim(V)\in \{-1,0,1,2\}$. Let
		\[
			S' = \{[p_{0}:p_{1}:p_{2}]\in \PP^{2}(\R):-p_{0} = \lambda_{k}(p_{1}, p_{2}
			) = \lambda_{n-k+1}(p_{1}, p_{2}) \} \text{ and }V' ={\rm span}(S') \subseteq
			\PP^{2}.
		\]
		The set $S$ is contained in $S'$ and so $\dim(V')\geq \dim(V)$. By \Cref{cor:SingRestriction},
		if $\dim(V)=2$, then $\Lambda_{k}(A)$ is empty and if $\dim(V)=1$, then
		$\Lambda_{k}(A)$ is either empty or the single point $a+\ii b$ where
		$[1:a:b]=V^{\perp}$, which can be determined by the membership test. If
		$\dim(V)=0$, then $S$ is a single point $p$. By
		\Cref{lm:restricting singularity}(i), $\Lambda_{k}(A)$ is contained in the
		line $\{a+\ii b : p_{0}+ap_{1}+bp_{2}=0\}$. By \Cref{thm:ptangents},
		$\Lambda_{k}(A)$ is the convex hull of the set of
		$a+\ii b \in \mathcal{T}\cap \Lambda_{k}(A)$.

		It remains to examine the case $S=\emptyset$. If $S'$ is nonempty, then by
		\Cref{lem:SvS'}, $\Lambda_{k}(A)$ is either empty or equal to $\{a+\ii b\}$ where
		$t+ax+by$ is a linear factor of $f_{A}$, in which case
		$a+\ii b\in \mathcal{C}$. Otherwise, $S'$ is empty. By \Cref{thm:tritangent},
		$\Lambda_{k}(A)$ is then either two-dimensional, a singleton $\{a+\ii b\}$ with
		$a+\ii b \in \mathcal{C}$, or empty.

		We claim that $\Lambda_{k}(A)$ is two-dimensional if and only if
		$\mathcal{R}\cap \Lambda_{k}(A)$ is nonempty. By \autoref{thm:boundary}, the
		boundary of $\Lambda_{k}(A)$ is contained in
		$\mathcal{V}_{\mathbb{R}}(g_{A})$. In particular, a point in $\mathcal{R}$
		belongs to $\Lambda_{k}(A)$ if and only if its belongs to the interior of
		$\Lambda_{k}(A)$. If there is any such point, then $\Lambda_{k}(A)$ is two-dimensional.
		Conversely, if $\Lambda_{k}(A)$ is two-dimensional, then it will contain some
		non-empty bounded connected component of $\C\backslash\{a+\ii b : g(a,b)=0\}$.
		The representative of this connected component gives a point in
		$\mathcal{R}\cap \Lambda_{k}(A)$. Moreover, by applying the membership test
		to $\mathcal{R}$, a set of representatives of each bounded connected component
		of the complement of $g_{A}$, we obtain at least one point for each
		connected component of $\Lambda_{k}(A)\backslash\{a+\ii b:g_{A}(a,b)=0\}$.

		If $\Lambda_{k}(A) = \{a+\ii b\}$, then, by \autoref{thm:tritangent}, we conclude
		that $t+ax+by$ is either a tritangent or a linear factor of $f_{A}$. In
		particular, $a+\ii b\in \mathcal{C}\cap \Lambda_{k}(A)$.

		Finally, if $\Lambda_{k}(A)$ is empty, then no points in $\mathcal{T}$,
		$\mathcal{R}$ or $\mathcal{C}$ past the membership test in
		\Cref{alg:membership} and we correctly conclude that
		$\dim(\Lambda_{k}(A))=-1$.
	\end{proof}

	We finish the discussion of the previous algorithm by describing how to
	compute the sets $\mathcal{R}$ and $\mathcal{C}$. We give an algorithm for computing
	$\mathcal{R}$ based on the theory of cylindrical algebraic decompositions for semialgebraic
	sets. For a thorough presentation see \cite[Section 5.1]{Basu2006}.

	\begin{algorithm}
		[H]
		\caption{Bounded connected components of $\R^{2} \backslash \mathcal{V}(g)$
		\smallskip
		\\
		\textbf{Input:} $g(a,b)$ a square-free polynomial in $\R[a,b]$ \\
		\textbf{Output:} A set of representatives for each bounded connected
		component of $\R^{2} \backslash \mathcal{V}(g)$.
		\smallskip
		}
		\label{alg:connectedComp}
		\begin{algorithmic}
			[0] \State Set $h ={\rm Disc}_{a}(g)$. \State Compute the distinct real
			roots $b_{1} < \ldots < b_{m}$ of $h$. \For{$i = 1, \hdots, m-1$} \State
			define $b'_{i} = (b_{i} + b_{i+1})/2$. \EndFor \For{$i = 1, \hdots, m-1$}
			\State Find the distinct real roots $a_{i1}< \ldots < a_{in_i }$ of
			$g(a,b_{i}')$. \For{$j = 1, \hdots, n_{i}$} \State define
			$a'_{ij}= (a_{ij}+ a_{i(j+1)})/2$. \EndFor \EndFor \State Set $\mathcal{R}=
			\bigcup_{i=1}^{m-1}\{ a'_{ij}+\ii b'_{i},\ldots, a'_{i(n_i-1)}+ \ii b'_{i}
			\}$. \State \Return{$\mathcal{R}$.}
		\end{algorithmic}\label{alg:connectedComponents}
	\end{algorithm}

	\begin{remark}[Computing tritangent lines and linear factors]
		\label{rem:tritangents} Let $f$ be homogeneous polynomial $f\in \C[t,x,y]$ and
		let $f^{\rm red}$ be the square-free part of $f$, which is homogeneous of
		some degree $d$. We can compute the set $(a,b)$ for which the line $t+ax+by$
		is tritangent to $\V(f)$ or divides $f$ via elimination theory. We note that
		when $t+ax+by$ divides $f$, the restriction of $f$ to this line is identically
		zero, which satisfies our definition for being tritangent, by taking $h=0$.
		Note that if $d< 6$, then the only lines tritangent to $f$ are its linear factors.
		The set of $(a,b)$ for which $t+ax+by$ divides $f^{\rm red}$ is the variety
		of the ideal generated by the $d+1$ coefficients of
		$f^{\rm red}(-ax-by,x,y)$ as a polynomial in $x$ and $y$.

		For $d\geq 6$, consider the ideal $I \subset \C[a,b,\ell_{00},\ell_{01},\ell_{10}
		,\ell_{11},\ell_{20},\ell_{21}, h_{0}, \hdots, h_{d-6}]$ generated by the $d+
		1$ coefficients of
		\begin{equation}
			\label{eq:tritangentCondition}f^{\rm red}(-ax-by,x,y) -(\ell_{00}x+\ell_{01}
			y)^{2}\cdot (\ell_{10}x+\ell_{11}y)^{2}\cdot(\ell_{11}x+\ell_{12}y)^{2}\cdot
			\left(\sum_{j=0}^{d-6}h_{j}x^{j}y^{d-6}\right)
		\end{equation}
		in $x$ and $y$. The line $t+ax+by$ is tritangent to $\V(f)$ if and only if
		there exists values of $\ell_{ij}$ and $h_{j}$ for which the polynomial in
		\eqref{eq:tritangentCondition} is identically zero. The set of $(a,b)$ for
		which this holds is therefore equal to the variety of the elimination ideal
		\[
			J ={\rm radical}\left({\rm eliminate}\left(I, \{\ell_{00},\ell_{01},\ell_{10}
			,\ell_{11},\ell_{20},\ell_{21}, h_{0}, \hdots, h_{d-6}\}\right)\right).
		\]
	\end{remark}

	\begin{remark}
		For the above algorithms we focus on correctness rather than computational efficiency.
		For an implementation of the above algorithm some modifications can be made
		in order to make it more computationally efficient. It is also worth noting that
		most of the sets of points that we need can be reused when computing $\Lambda
		_{k}(A)$ for different $k'$s. The polynomial $g_{A}$ is also independent of $k$
		and therefore it needs to be computed only once.
	\end{remark}

	\section{Gallery of Examples}
	\label{sec:gallery}

	In this section we illustrate the behavior of the given algorithms of
	\autoref{sec:algorithm} and highlight the many possible subtle behaviors of
	higher rank numerical ranges.

	\begin{example}
		\label{ex:tritangent} In this example, we construct a matrix $\tilde{A}$
		with $0$-dimensional $\Lambda_{k}(\tilde{A})$ coming from a tritangent line
		not passing through any singularities of $\V(f_{\tilde{A}})$.

		To start, let $A$ denote the matrix from Example~\ref{ex:quartic1}. The
		polynomial $f_{A}$ defines a quartic plane curve. We define a parametric family
		of $6\times 6$ matrices by
		\[
			\tilde{A}(u) =
			\begin{pmatrix}
				B + u(1+\ii)I_{2} & 0 \\
				0                 & A
			\end{pmatrix}
			\ \ \ \text{ where }\ \ \ B =
			\begin{pmatrix}
				0 & 3 \\
				0 & 0 \\
			\end{pmatrix}
		\]
		and $I_{2}$ denotes the $2\times 2$ identity matrix. The boundary of the
		numerical range of $B$ is the circle $\{a + \ii b : 4a^{2} + 4b^{2} = 9\}$. The
		boundary of the numerical range of $B + u(1+\ii)I_{2}$ is this circle translated
		by $u(1+\ii)$. \autoref{fig:quarticxHyperbola} shows the Kippenhahn curves
		and numerical ranges of $\tilde{A}(u)$ for $u=-1, \hat{u}$ and $-9/4$ where
		\[
			\hat{u}=\frac{-4 \sqrt{540+330 \sqrt{3}}-3 \sqrt{1374+792 \sqrt{3}}}{72+44
			\sqrt{3}}.
		\]
		We see that for $u=-1$, $\Lambda_{3}(\tilde{A})$ is full dimensional,
		corresponding to the two-dimensional family of lines not intersecting the
		curve $\V_{\R}(f_{\tilde{A}})$. On the other hand, for $u=-9/4$, $\Lambda_{3}
		(\tilde{A})$ is empty, as every line intersects regions of
		$\R^{3}\backslash \V_{\R}(f_{\tilde{A}})$ at which the matrix
		$tI + x\Re(\tilde{A}) + y\Im(\tilde{A})$ has four positive or negative
		eigenvalues. The value of $\hat{u}$ was computed so that for $u=\hat{u}$,
		$\Lambda_{3}(\tilde{A})$ is a single point. This corresponds to the unique line
		on which the matrix pencil has at most three positive and at most three negative
		eigenvalues. This line is tritangent to the curve $\V_{\R}(f_{\tilde{A}})$, as
		promised by \autoref{thm:tritangent}.

		In \autoref{fig:quarticxHyperbola}, the sets $\Lambda_{2}({\tilde{A}})$ are
		shown in light blue. As promised by \autoref{thm:boundary}, the boundary of
		these regions is contained in the zero locus of $g_{\tilde{A}}$.
	\end{example}

	\begin{figure}
		\centering
		\includegraphics[height=2in]{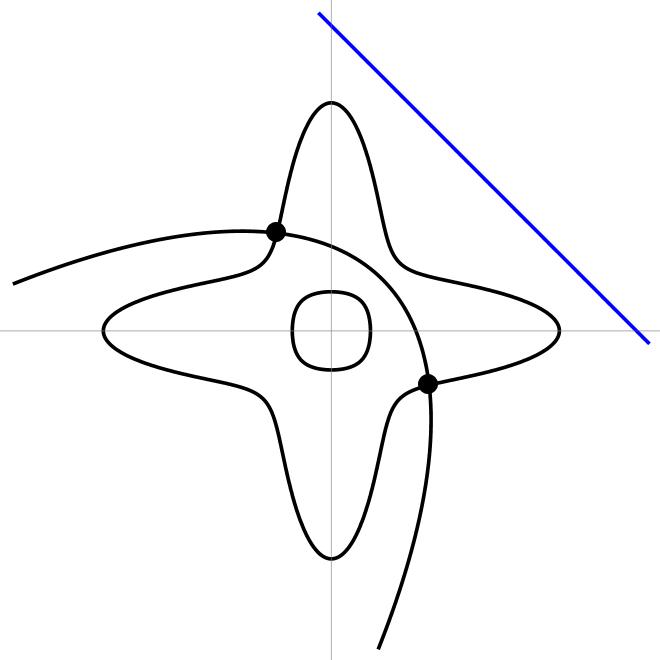}
		\centering
		\includegraphics[height=2in]{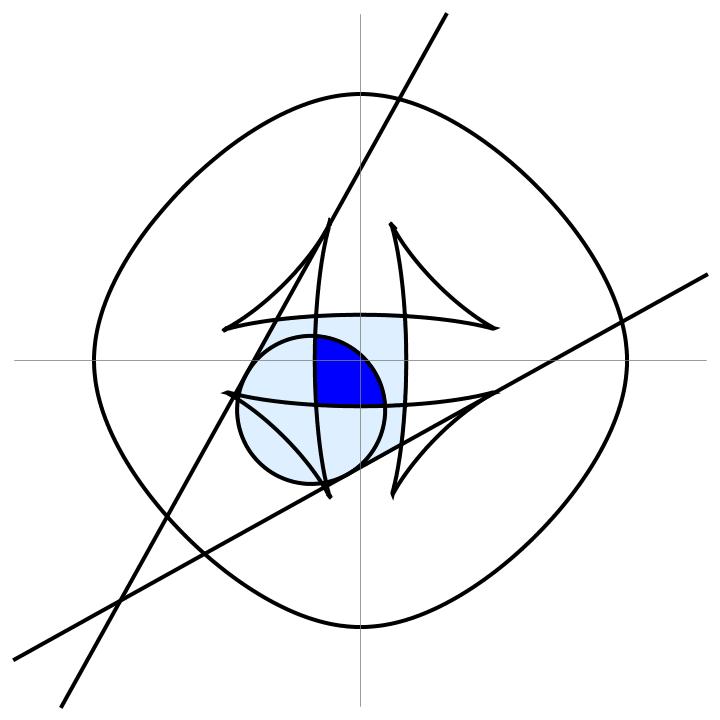}
		\vfill
		\includegraphics[height=2in]{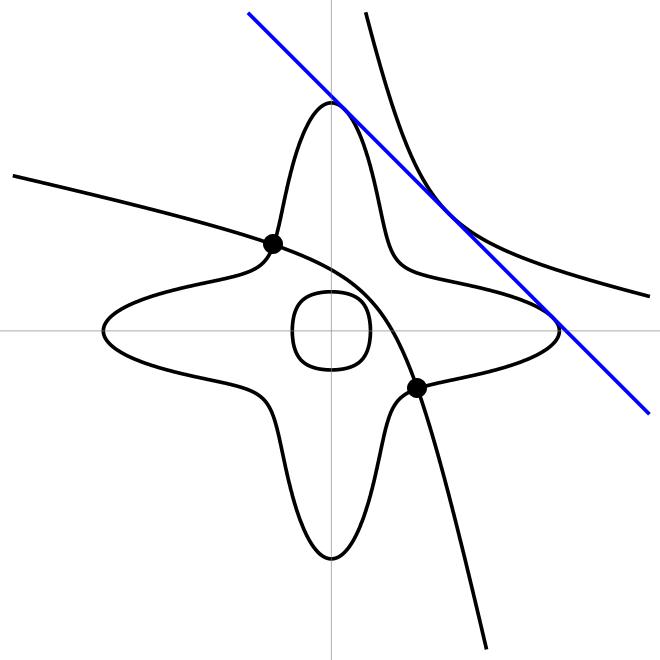}
		\centering
		\includegraphics[height=2in]{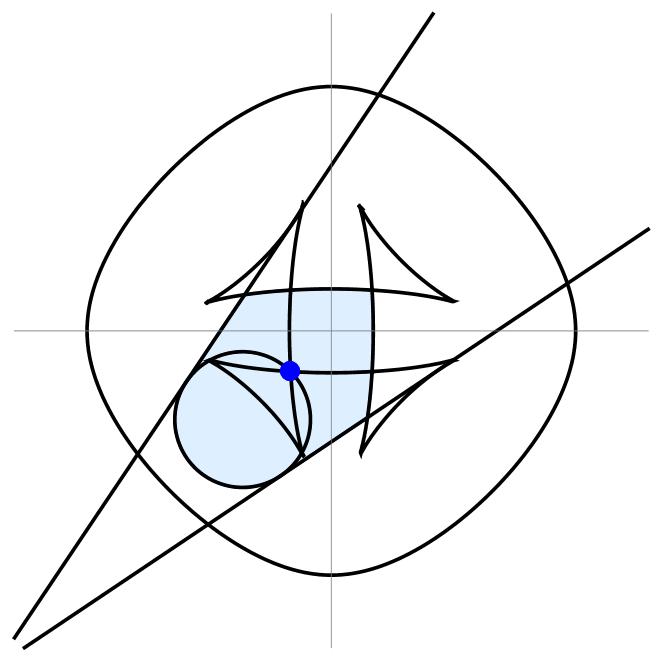}
		\vfill
		\includegraphics[height=2in]{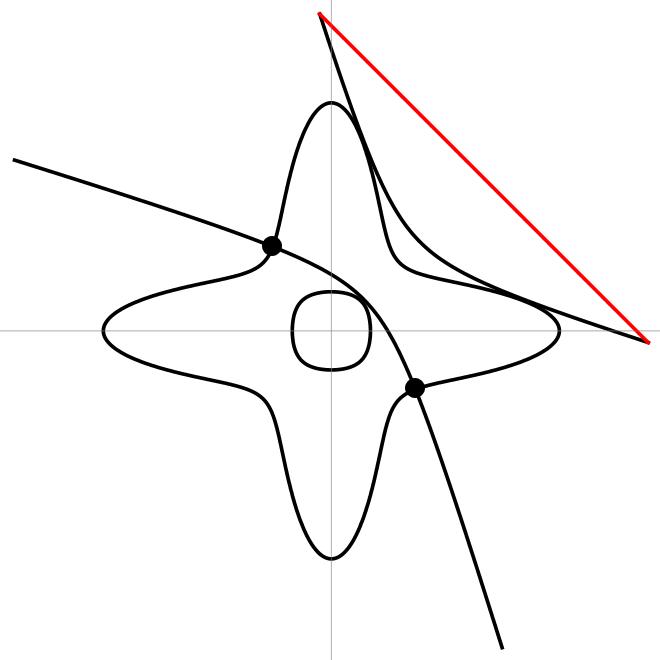}
		\centering
		\includegraphics[height=2in]{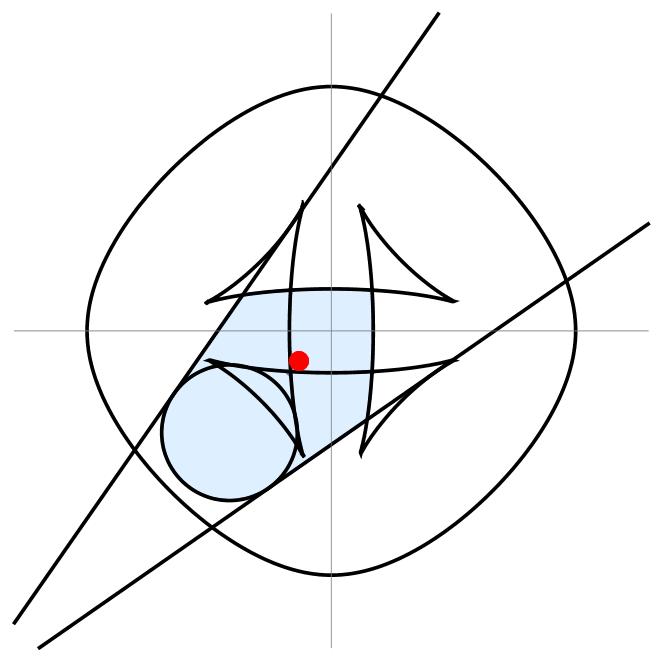}

		\caption{Kippenhahn curve and dual curve for Example~\ref{ex:tritangent}. Here
		we study a family of curves obtained by allowing the circle on the right to be
		shifted along the line $a = b$. The regions shaded by light and dark blue
		correspond to $\Lambda_{2}(\tilde{A})$ and $\Lambda_{3}(\tilde{A})$, respectively.
		As we translate the circle, the rank-3 numerical ranges passes from being 2-dimensional
		to 0-dimensional to empty.}
		\label{fig:quarticxHyperbola}
	\end{figure}

	\begin{figure}
		\includegraphics[height=2in]{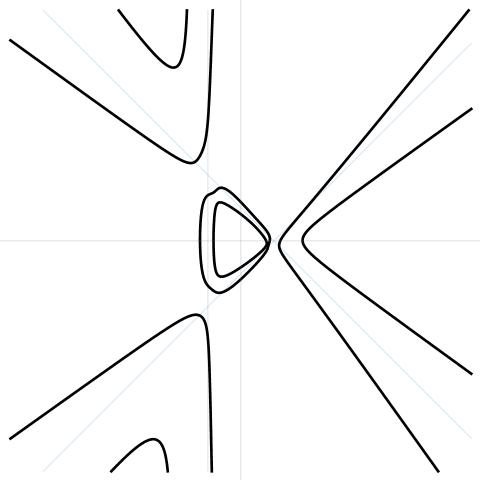}
		\includegraphics[height=2in]{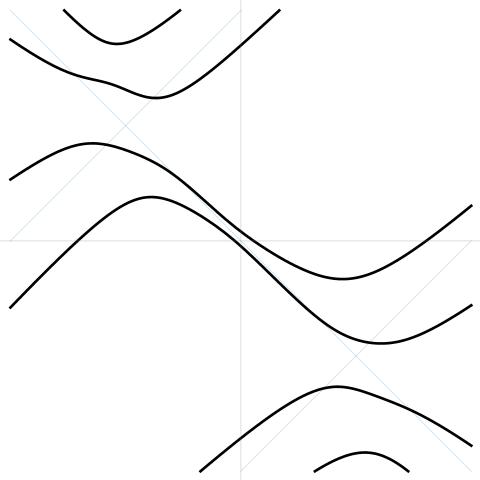}
		\caption{Kippenhahn curve for Example~\ref{ex:SmoothSextic} in the charts
		$t = 1$ and $y = 1$. }
		\label{fig:sextic}
	\end{figure}

	\begin{example}
		\label{ex:SmoothSextic} Here we give a $6\times 6$ matrix $\tilde{A}$ for which
		the curve $\V(f_{\tilde{A}})$ is irreducible and smooth but the numerical range
		$\Lambda_{3}(\tilde{A})$ is empty.

		Let $A$ denote the diagonal matrix ${\rm diag}(1,1, -1 + \ii, -1 + \ii, -1 -\ii
		, -1 -\ii )$. By the formula of $\Lambda_{k}(A)$ for normal matrices we know
		that $\Lambda_{3}(A)$ is empty. To construct a smooth curve with the same property
		we do a small perturbation. Specifically, define
		\[
			B =
			\begin{pmatrix}
				0  & 1  & 1 & -1 & 0 & 1  \\
				1  & 0  & 0 & 0  & 1 & 0  \\
				-1 & 1  & 1 & -1 & 1 & 1  \\
				1  & 1  & 1 & 0  & 1 & 0  \\
				0  & -1 & 1 & 1  & 1 & -1 \\
				-1 & 0  & 1 & 0  & 1 & -1
			\end{pmatrix}.
		\]
		The matrix $\tilde{A}= A + (1/5)B$ has the desired properties. The curve
		$\V(f_{\tilde{A}})$ is shown in the affine charts $t=1$ and $y=1$ in
		\autoref{fig:sextic}.
	\end{example}

	\begin{example}
		\label{ex:pringle2} As we saw in \autoref{sec:Dim01}, zero-dimensional rank-$k$
		numerical ranges can also come from antipodal points on the curve $O_{k}(A)$,
		which give rise to singularities in the Kippenhahn curve $\V_{\R}(f_{A})$.
		Consider the $4\times 4$ matrix $A$ from Example~\ref{ex:pringle} with
		Kippenhahn polynomial
		$f_{A} = t^{4} - 5 t^{2} x^{2} + 4 x^{4} - t^{2} y^{2}.$ The point $0+0\ii$,
		corresponding to the line $t = 0$, is the only one that passes the
		membership test for $\Lambda_{2}(A)$. The curve $\V_{\R}(f_{A})$ and line $t=
		0$ in the affine chart $y=1$ are shown in \autoref{fig:pringle2}.
	\end{example}

	\begin{figure}
		\includegraphics[height=2in]{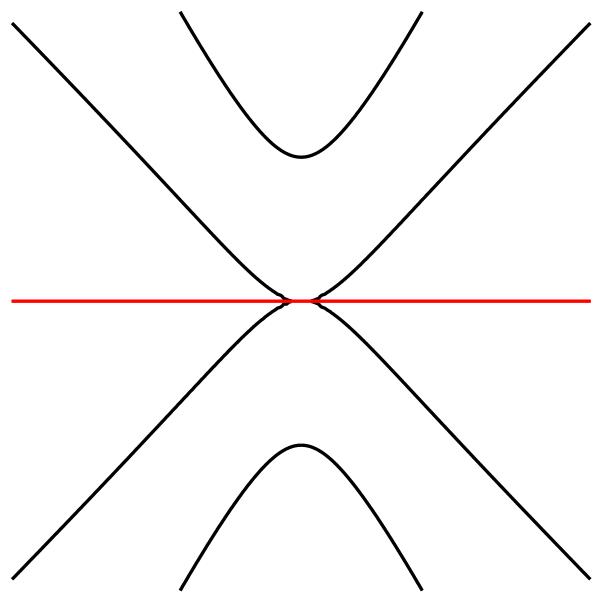}
		\caption{The Kippenhahn curve $\V_{\R}(f_{A})$ from Example~\ref{ex:pringle2}.}
		\label{fig:pringle2}
	\end{figure}

	If $a+\ii b$ is the endpoint of a one-dimensional $\Lambda_{k}(A)$, the line $t
	+ax+by$ is either tangent to $\V_{\R}(f_{A})$ at multiple points or has a zero
	of multiplicity $d + 1$ at a singularity of multiplicity $d$. One might think
	that the algebraic condition for $\Lambda_{k}(A)$ to be a single point should
	be a degeneration of the algebraic condition for two endpoints of a one-dimesional
	$\Lambda_{k}(A)$. This would suggest that the corresponding line is either tangent
	to the curve $\V_{\R}(f_{A})$ at multiple points or has a zero of multiplicity
	$d + 2$ at a singularity of multiplicity $d$. The next example shows that this
	is not the case.

	\begin{example}
		\label{ex:weirdTritangent} Consider the matrix
		\[
			A =
			\begin{pmatrix}
				1 + \ii & 1  & 1  & 0  \\
				1       & -1 & -1 & 0  \\
				1       & -1 & -1 & -1 \\
				0       & 0  & -1 & 0
			\end{pmatrix}.
		\]
		The Kippenhahn polynomial is
		\[
			f_{A}(t,x,y) = t^{4}-t^{3} x+t^{3} y-5 t^{2} x^{2}-2 t^{2} x y-t x^{2} y+2
			x^{4}+x^{3} y
		\]
		and its dual curve is given by
		\begin{align*}
			 & g_{A}= 49 a^{6}-644 a^{5} b+196 a^{5}+3824 a^{4} b^{2}-2212 a^{4} b+98 a^{4}-12172 a^{3} b^{3}+8942 a^{3}b^{2}+56 a^{3} b- \\
			 & 294 a^{3}+21248 a^{2} b^{4}-16084 a^{2} b^{3}-3420 a^{2} b^{2}+2324 a^{2} b-147 a^{2}-18836 a b^{5}+11860 a b^{4}+         \\
			 & 9244 a b^{3}-4754 a b^{2}+56 a b+98 a+7260 b^{6}-5132 b^{5}-1739 b^{4}-1600 b^{3}+2402 b^{2} -672 b+49.
		\end{align*}
		The curve $\V_{\R}(f_{A})$ has a singularity at $p=[0:0:1]$ and $\lambda_{2}(
		0,1) = \lambda_{3}(0,1) =\lambda_{4}(0,1) = 0$. By \autoref{thm:ptangents}, it
		follows that $\Lambda_{2}(A)$ is contained in the line $b = 0$. The curve
		has three $p$-tangent lines. The lines and the corresponding points of $\V_{\R}
		(g_{A})$ are shown in \autoref{fig:singleton_ptangents}. Of these, only one point
		passes the membership test for $\Lambda_{2}(A)$ and we conclude that $\Lambda
		_{2}(A)$ is this single point.

		The curve $\V(f_{A})$ has multiplicity three at $p$. The restriction of
		$f_{A}$ to each $p$-tangent line has a single root of multiplicity four. In particular,
		this shows that we are unable to use algebraic methods to distinguish
		between the point of a zero-dimensional set $\Lambda_{k}(A)$ and the
		endpoint of a one-dimensional set $\Lambda_{k}(A)$.

		The curve $\V_{\R}(f_{A})$ in the affine chart $y=1$ is the union of the
		graphs of the function $t = -\lambda_{k}(x\Re(A) + \Im(A))$ for $k=1,2,3,4$.
		In many other examples, the branches of $\V_{\R}(f_{A})$ meeting at a
		singularity with a common tangent line correspond to the $\lambda_{k}$ and
		$\lambda_{n-k}$ eigenfunctions. Here the branches $\lambda_{2}$ and $\lambda_{4}$
		meet along common tangent lines at $p$.
	\end{example}
	\begin{figure}
		\includegraphics[height=2in]{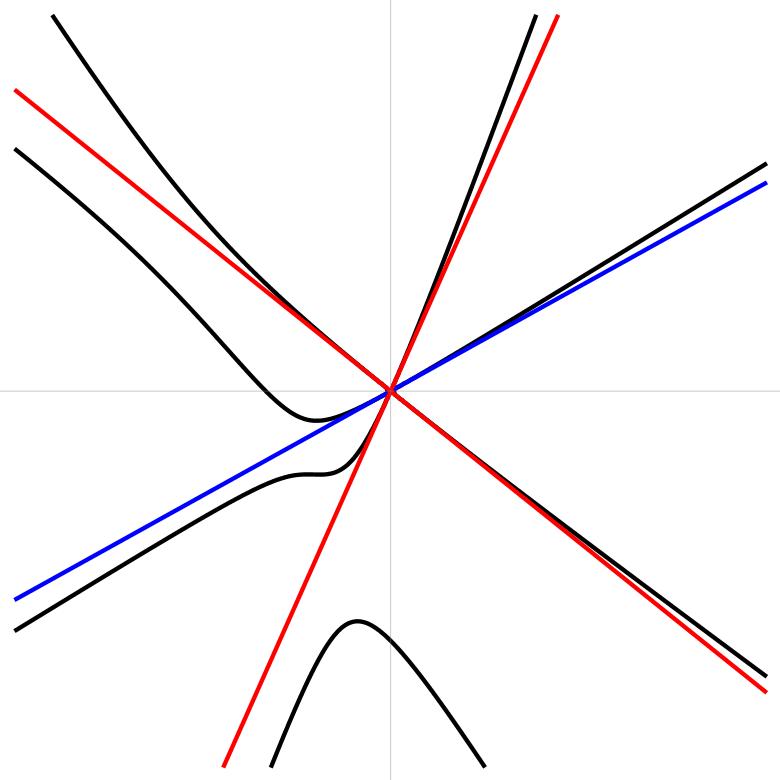}
		\includegraphics[height=2in]{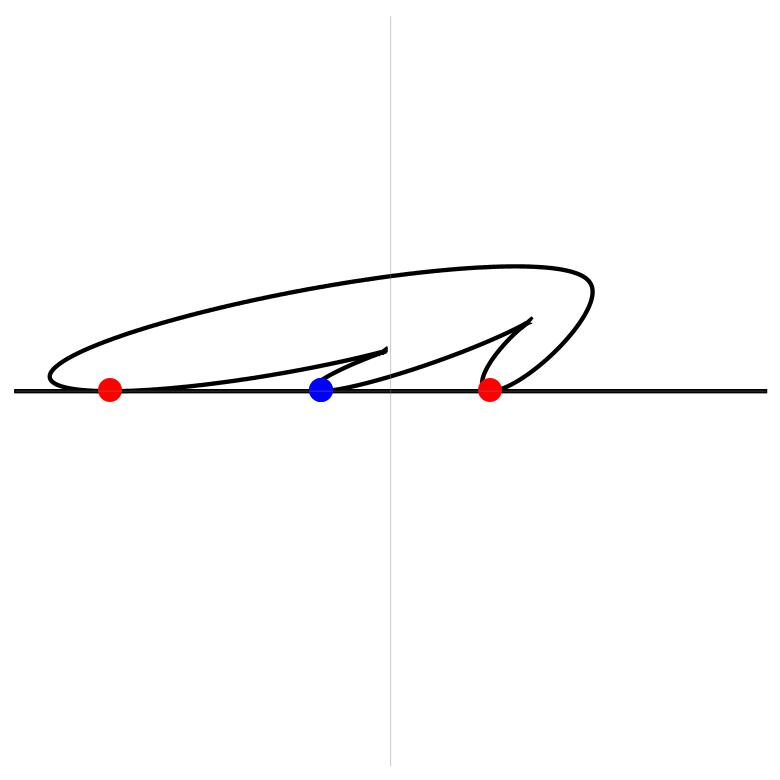}
		\caption{The curve $\V(f_{A})$ from Example~\ref{ex:weirdTritangent} along
		with its three $[0:0:1]$-tangent lines. On the right is the curve $\V_{\R}(g_{A}
		)$ along with the points corresponding to these lines. The unique point in
		$\Lambda_{2}(A)$ and corresponding line are shown in blue. }
		\label{fig:singleton_ptangents}
	\end{figure}

	\bibliographystyle{plain}
	\bibliography{references}
\end{document}